\newcommand{\mc}[1]{\mathcal{#1}}
\newcommand{\indic}[1]{\mathbf{1}_{\{#1\}}}
\newcommand{\R}{\mathbb{R}}
\newcommand{\N}{\mathbb{N}}
\newcommand{\Z}{\mathbb{Z}}
\newcommand{\ee}{\mathrm{e}}
\newcommand{\ra}{\rightarrow}
\renewcommand{\P}{\mathbb{P}}
\newcommand{\E}{\mathbb{E}}
\newcommand{\vep}{\varepsilon}
\newcommand{\nn}{\nonumber}
\newcommand{\blank}[1]{}
\newcommand{\bs}[1]{\boldsymbol{#1}}
\newtheorem{THM}{Theorem}
\newtheorem*{THM*}{Theorem}
\newtheorem{LEM}{Lemma}[section]
\newtheorem{REM}[LEM]{Remark}
\newtheorem{EXA}[LEM]{Example}
\newtheorem{OPEN}{Open Problem}
\newtheorem{COND}{Condition}
\title{\vspace{-40pt}Biased random walk on the trace of\\biased random walk on the trace of \dots}
\author{
{David Croydon\footnote{Department of Advanced Mathematical Sciences, Graduate School of Informatics, Kyoto University. Email:  croydon@acs.i.kyoto-u.ac.jp }}\: and
{Mark Holmes\footnote{School of Mathematics and Statistics, University of Melbourne. Email: holmes.m@unimelb.edu.au}}\;
}
\begin{document}

\maketitle
\abstract{We study the behaviour of a sequence of biased random walks $(X^{(i)})_{i \ge 0}$ on a sequence of random graphs, where the initial graph is $\Z^d$ and otherwise the graph for the $i$th walk is the trace of the $(i-1)$st walk.  The sequence of bias vectors is chosen so that each walk is transient. We prove the aforementioned transience and a law of large numbers, and provide criteria for ballisticity and sub-ballisticity.  We give examples of sequences of biases for which each $(X^{(i)})_{i \ge 1}$ is (transient but) not ballistic, and the limiting graph is an infinite simple (self-avoiding) path.  We also give examples for which each $(X^{(i)})_{i \ge 1}$ is ballistic, but the limiting graph is not a simple path.\\
\textbf{Keywords:} biased random walk, random walk in random environment, sub-ballistic, trapping, regeneration times\\
\textbf{MSC:} 60K37 (primary), 60G50, 60K35, 82B26, 82B41}

\tableofcontents

\section{Introduction}

The study of stochastic processes in disordered media is an important aspect of modern probability.  Models in this area for which extensive research has been conducted include the classical model of random walk in random environment, as well as random walks on random graphs, such as Galton-Watson trees and percolation clusters. Typical properties that one is interested in include: (i) recurrence/transience; (ii) laws of large numbers (i.e.\ the existence of a deterministic limiting velocity); (iii) conditions for ballisticity/sub-ballisticity; (iv) regularity (i.e.\ continuity, monotonicity or lack thereof) of attributes (e.g.\ the velocity) in terms of some underlying parameter; and (v) scaling limits.

In this paper, we tackle some of these issues for sequences of biased random walks $(X^{(i)})_{i \ge 0}$ on random graphs, where the initial graph is $\mc{Z}^{(0)}=\Z^d$ and otherwise the graph $\mc{Z}^{(i)}$ for the $i$th walk is the trace of the $(i-1)$st walk (see Figure \ref{fig:paths}). The sequence of biases is chosen so that each walk is transient -- somewhat remarkably, this does not mean that we necessarily require the underlying drift of the $i$th walk to be oriented in the direction of the initial walk, see Remark \ref{counterrem} for an elaboration of this point. By regeneration arguments, which require some care to take into account the multiple processes, we demonstrate the existence of deterministic limiting speeds, see Theorem \ref{thm:main1} below for a precise statement of these results. Regarding the issue of ballisticity, we note that the initial walk $X^{(0)}$, which has non-trivial bias in the direction $e_1$, creates traps for subsequent walks. Moreover, except in trivial settings, the walk $X^{(i)}$ does not visit every site of $\mc{Z}^{(i)}$, so its trace $\mc{Z}^{(i+1)}$ is a strict subset of $\mc{Z}^{(i)}$, meaning that the walk $X^{(i)}$ may delete some traps and create new ones. We will show that the effect of trapping can lead to zero speeds, and in particular establish a sharp phase transition for whether the walk $X^{(i)}$ is ballistic or sub-ballistic, see Theorem \ref{thm:main2} below. Finally, we exhibit conditions depending on the sequence of biases that ensure the limiting graph is or is not an infinite simple path, see Theorem \ref{thm:main3}. In certain cases (such as when the sequence of biases is constant) the law of this limit may be of independent interest.

Before introducing our model and results, let us briefly relate our work to other studies in which trapping has been observed for biased random walk on random graphs.  As early as the 1980s, physicists observed that such phenomenon might be relevant when the random graphs are percolation clusters, empirically demonstrating the non-monotonicity of the speed, and sub-ballisticity in the strong bias regime \cite{BD}. Mathematically, a phase transition between ballisticity and sub-ballisticity was first shown rigorously for the simpler model of random walk on supercritical Galton-Watson trees \cite{LPP} (see also \cite{BFGH,Bowditch,CFK} for recent work concerning more detailed properties of such processes), and has since been confirmed to hold in the percolation setting \cite{BGP, FH, Sznitman}. A relatively up-to-date survey of these developments is given in \cite{BAF}. Qualitatively, our results match those established for Galton-Watson trees and percolation clusters, and, although we do not confirm it rigorously, we also observe empirically non-monotonic behaviour for the speed that is similar to the behaviour expected for these other models. Moreover, whilst our graphs are more complex than trees, in the sense there is not a unique shortest path between vertices and the traps are less obviously defined, the model is still more tractable than the percolation case. As a result, we are able to give a more concrete expression for the critical point that separates the ballistic and sub-ballistic phase, which we are even able to evaluate explicitly in examples. Finally, we note that, in another related work, biased random walk on an unbiased random walk has been shown to exhibit localisation on a logarithmic scale \cite{C2}.

\begin{figure}
\begin{center}
\vspace{-30pt}
\includegraphics[angle=270,width=0.9\textwidth]{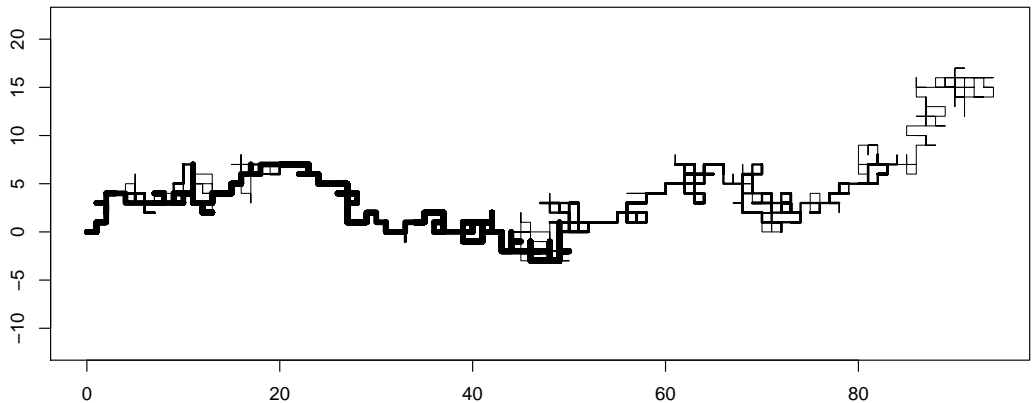}
\vspace{-40pt}

\end{center}
\caption{A simulation of a small finite piece of the walks $X^{(0)},X^{(1)},X^{(2)}$ (shaded thinnest/lightest to thickest/darkest), in 2 dimensions with common biases $p^{(i)}(e_1)=3/9$, and $p^{(i)}(e)=2/9$ for $e\ne e_1$.}
\label{fig:paths}
\end{figure}

\subsection{The model}

Let $(\bs{p}^{(i)})_{i \in \Z_+\cup\{\infty\}}$ be a sequence of probability distributions on the standard basis vectors $\{\pm e_j:j \in [d]\}$ in $\Z^d$, where $[d]:=\{1,2,\dots,d\}$ and $d\ge 2$. Given such a probability distribution $\bs{p}$, and a connected subgraph $\mc{Z}$ of $\Z^d$ that contains the origin $0\in\mathbb{Z}^d$, we define a $\bs{p}$-random walk $X=(X_n)_{n \in \Z_+}$ on $\mc{Z}$ to be the discrete-time Markov chain starting at $0$ with transition probabilities
\[P^{\mc{Z}}(X_{n+1}=x+e|X_n=x)=\begin{cases}
\frac{p(e)}{\sum_{e':(x,x+e')\in \mc{Z}}p(e')}, & \textrm{ if }(x,x+e)\in \mc{Z},\\
0, & \textrm{ otherwise}.
\end{cases}\]
Let $\mc{Z}^{(0)}=\Z^d$, and let $X^{(0)}$ be a $\bs{p}^{(0)}$-random walk on $\mc{Z}^{(0)}$, i.e.~a simple random walk on $\Z^d$ with step distribution $\bs{p}^{(0)}$.  Given a walk $X^{(i)}$ taking values in $\Z^d$, we let $\mc{Z}^{(i+1)}=(\mc{V}^{(i+1)},\mc{E}^{(i+1)})$ be the connected graph with vertex set \[\mc{V}^{(i+1)}=\left\{X^{(i)}_n:n \in \Z_+\right\},\]
and (undirected) edge set
\[\mc{E}^{(i+1)}=\left\{(X^{(i)}_n,X^{(i)}_{n+1}):n \in \Z_+\right\}.\]
If $\bs{p}^{(0)}$ is {\em biased} (${p}^{(0)}(e)\ne {p}^{(0)}(-e)$ for some $e$) and {\em non-trivial} (${p}^{(0)}(e)\neq 1$ for any $e$), then the walk $X^{(0)}$ is transient (in fact ballistic) and the graph $\mc{Z}^{(1)}$ is random.

The above procedure can now be iterated, leading to a sequence of walks $(X^{(i)})_{i \in \Z_+}$ (with $X^{(i)}$ being a $\bs{p}^{(i)}$-random walk on $\mc{Z}^{(i)}$ for each $i$) that are conditionally independent given the graphs $\mc{Z}^{(i)}$ (that is, given $\mc{Z}^{(i)}$, $X^{(i)}$ is conditionally independent of $X^{(j)}$ for $j<i$).  The sequence of graphs is decreasing ($\mc{Z}^{(i+1)}\subseteq \mc{Z}^{(i)}$ for each $i$), and as such we can define
\[\mc{Z}^{(\infty)}:=\bigcap_{i \in \Z_+}\mc{Z}^{(0)}.\]
We denote by $X^{(\infty)}$ a $\bs{p}^{(\infty)}$-random walk on $\mc{Z}^{(\infty)}$.  We will suppose the sequence of walks $(X^{(i)})_{i\in\Z_+\cup\{\infty\}}$ is defined on a probability space $(\Omega, \mc{F}, \P)$.

Let $\delta^{(i)}_j=p^{(i)}(e_j)-p^{(i)}(-e_j)$, and $\delta^{(i)}=(\delta^{(i)}_j)_{j \in [d]}$. This vector represents the drift of a $\bs{p}^{(i)}$-random walk on $\mathbb{Z}^d$. In particular, $\mathbb{P}$-a.s.,
\[v^{(0)}:=\lim_{n \ra \infty}n^{-1}X^{(0)}_n=\delta^{(0)}.\]
Moreover, under the assumption that $p^{(i)}(e)>0$ for every $e$, let $\ell^{(i)}=\hat{\ell}^{(i)}/\|\hat{\ell}^{(i)}\|$, where $\|\cdot \|$ denotes the Euclidean norm, and $\hat{\ell}^{(i)}=(\hat{\ell}^{(i)}_j)_{j \in [d]}\in \R^d$ is defined by
\[\hat{\ell}^{(i)}_j=\log\left(\frac{p^{(i)}(e_j)}{p^{(i)}(-e_j)}\right).\]
In general, $\ell^{(i)}\ne \delta^{(i)}/\|\delta^{(i)}\|$.  However, if $\delta^{(i)}\ne 0$ then
\begin{align*}
\ell^{(i)}\cdot \delta^{(i)}
&=\frac{1}{\|\hat{\ell}^{(i)}\|}\sum_{j=1}^d [\log p^{(i)}(e_j) - \log p^{(i)}(-e_j)][p^{(i)}(e_j) - p^{(i)}(-e_j)]>0
\end{align*}
since all of the summands are non-negative and at least one is strictly positive.

We henceforth assume the following.
\begin{COND}
\label{cond:1}
The $(\bs{p}^{(i)})_{i \in \Z_+\cup \{\infty\}}$ are such that
\begin{itemize}
\item[(a)] $\delta^{(0)}_j\ge 0$ for each $j\in [d]$, and $\delta^{(0)}_1>0$, and
\item[(b)] $p^{(i)}(e)>0$ for all $e \in \{\pm e_j:j \in [d]\}$, and
\item[(c)] for each $i\in\N\cup\{\infty\}$, $\delta^{(0)}\cdot \ell^{(i)}>0$.
\end{itemize}
\end{COND}

We lose no generality in assuming Condition \ref{cond:1}(a); it is included solely for the purpose of fixing a direction of transience for the walk $X^{(0)}$. Condition \ref{cond:1}(b) ensures that the walks always have an available move. Condition \ref{cond:1}(c) is the condition required to ensure that all subsequent walks are also transient.  We highlight that it involves the vector $\ell^{(i)}$ rather than $\delta^{(i)}$ (the importance of this distinction is discussed in Remark \ref{counterrem}).  Note that, given the other two conditions, it is also sharp (see Lemma \ref{lem:rec}).

A further advantage to assuming Condition \ref{cond:1}(b) is that it allows us to express the laws of the processes in terms of conductance networks. More precisely, for each $i\in\Z_+\cup\{\infty\}$, let
\[c_{(i),j}=p^{(i)}(-e_j),\qquad\beta_{(i)}=\exp\{\|\hat{\ell}^{(i)}\|\}\ge 1.\]
Then for each $x=(x_1,x_2,\dots,x_d),y=(y_1,y_2,\dots,y_d)\in\mathbb{Z}^d$ with $\|x-y\|=1$, define
\begin{equation}\label{conductances}
c^{(i)}(x,y)=\left(\prod_{j=1}^d c_{(i),j}^{|y_j-x_j|}\right)\beta_{(i)}^{(x\vee y)\cdot\ell^{(i)}},
\end{equation}
where $x\vee y =(x_j\vee y_j)_{j\in [d]}$.  The quantity $c^{(i)}(x,y)$ is called the {\em conductance} of edge $(x,y)$ for $\bs{p}^{(i)}$.  Thus $\ell^{(i)}$ (which is non-zero under Condition \ref{cond:1}) describes the direction in which the conductances grow most rapidly, and $\beta_{(i)}$ (which is strictly greater than one under Condition \ref{cond:1}) describes the rate of increase.  Let $c^{(i)}(x)=\sum_{y:(x,y)\in \mc{E}^{(i)}} c^{(i)}(x,y)$, and, to simplify notation, write $P^{(i)}$ for $P^{\mc{Z}^{(i)}}$.  It is straightforward to check that, given $\mc{Z}^{(i)}$
\begin{equation}
P^{(i)}(X^{(i)}_{n+1}=x+e|X^{(i)}_n=x)=\begin{cases}
\frac{c^{(i)}(x,x+e)}{c^{(i)}(x)}, & \textrm{ if }(x,x+e)\in \mc{Z}^{(i)},\\
0, & \textrm{ otherwise}.\nn
\end{cases}
\end{equation}

\subsection{Main results}

We now introduce the main results that were briefly outlined above. Firstly, we establish directional transience and existence of limiting velocities.

\begin{THM}\label{thm:main1}
Assume Condition \ref{cond:1}.  It then
$\P$-a.s.\ holds that:
\begin{itemize}
\item[(a)] for each $i\in \mathbb{Z}_+\cup\{\infty\}$, $\lim_{n\rightarrow\infty}X^{(i)}_n\cdot \ell \ra \infty$ for every $\ell$ such that $\ell\cdot \delta^{(0)}>0$;
\item[(b)] for each $i\in \mathbb{Z}_+$, there exists a deterministic $\kappa_{(i)}\in[0,\infty)$ such that
\[v^{(i)}:=\lim_{n \ra \infty} n^{-1}X^{(i)}_n=\kappa_{(i)}\delta^{(0)}.\]
\end{itemize}
\end{THM}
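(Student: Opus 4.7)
The plan is to exploit the regeneration structure of the base walk $X^{(0)}$ to produce a ``backbone'' of cut vertices inherited by every graph in the sequence, and to combine this with the conductance biases $\ell^{(i)}$ and Condition~\ref{cond:1}(c) to propagate transience and a law of large numbers.

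The walk $X^{(0)}$ is a classical biased random walk on $\Z^d$ with drift $\delta^{(0)}$ having strictly positive $e_1$-component, so by the standard Sznitman--Zerner construction it admits a $\P$-a.s.\ infinite sequence of regeneration times $(\tau_k)_{k\ge 1}$ in direction $e_1$. Taking the strict version of the definition, each $X^{(0)}_{\tau_k}$ is visited only once by $X^{(0)}$ and has neighbours $X^{(0)}_{\tau_k\pm 1}$ strictly in the past/future half-spaces, so it is a cut vertex of degree two in $\mathcal{Z}^{(1)}$. Arguing inductively in $i$ and using the transience proved in the previous step, every such cut vertex persists in $\mathcal{Z}^{(i)}$ (the walk $X^{(i-1)}$ cannot escape a bounded region without crossing it), and hence also in $\mathcal{Z}^{(\infty)}$; it remains a cut vertex of degree two throughout. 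The LLN for $X^{(0)}$ gives $k^{-1}X^{(0)}_{\tau_k}\to\mu$ $\P$-a.s., with $\mu$ a positive multiple of $\delta^{(0)}$.

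For part (a) with $i=0$ the claim is classical. For $i\ge 1$ and $i=\infty$, let $B_k$ denote the (random, finite) piece of $\mathcal{Z}^{(i)}$ lying between the consecutive cut vertices $X^{(0)}_{\tau_{k-1}}$ and $X^{(0)}_{\tau_k}$. Since $X^{(i)}$ is reversible with conductances $c^{(i)}$ as in~\eqref{conductances}, I would use an electrical-network computation on $B_k\cup B_{k+1}$ to show that, starting at $X^{(0)}_{\tau_k}$, the probability of first hitting $X^{(0)}_{\tau_{k+1}}$ before $X^{(0)}_{\tau_{k-1}}$ exceeds some deterministic $p>1/2$. Quantitatively, the ratio of forward to backward exit probabilities is at least $\beta_{(i)}^{(X^{(0)}_{\tau_{k+1}}-X^{(0)}_{\tau_{k-1}})\cdot\ell^{(i)}}$ up to polynomial corrections in the block sizes, and the exponent is $\P$-a.s.\ uniformly bounded below for large $k$ by combining the LLN for $X^{(0)}$ with Condition~\ref{cond:1}(c). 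Since the $B_k$ are (conditionally) i.i.d., a Borel--Cantelli or 1D-RWRE argument yields only finitely many backward block crossings, and so $X^{(i)}_n\cdot e_1\to\infty$ $\P$-a.s. Because the walk is confined to bounded blocks between consecutive cut vertices and $k^{-1}X^{(0)}_{\tau_k}\cdot\ell\to\mu\cdot\ell>0$ for every $\ell$ with $\ell\cdot\delta^{(0)}>0$, the full statement in (a) follows.

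For part (b), I would define regeneration times $T^{(i)}_j$ of $X^{(i)}$ as the first hitting times of those $X^{(0)}_{\tau_k}$ that $X^{(i)}$ never revisits; these are $\P$-a.s.\ infinite by part (a). The principal obstacle is the joint regeneration decomposition: one needs to arrange things so that, conditional on the natural $\sigma$-algebra at time $T^{(i)}_j$, the increments $(T^{(i)}_{j+1}-T^{(i)}_j,\,X^{(i)}_{T^{(i)}_{j+1}}-X^{(i)}_{T^{(i)}_j})$ are i.i.d. This requires jointly re-randomising the unexplored futures of all of $X^{(0)},X^{(1)},\ldots,X^{(i)}$ together with the tail of $\mathcal{Z}^{(i)}$, which is the ``care'' alluded to in the introduction. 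Granted the construction, the spatial increment is a difference of consecutive $X^{(0)}$-regeneration points, whose conditional expectation (finite or infinite) is proportional to $\delta^{(0)}$, inherited from the corresponding property for $X^{(0)}$. A standard renewal LLN combined with the bounded step $|X^{(i)}_{n+1}-X^{(i)}_n|=1$ then delivers $n^{-1}X^{(i)}_n\to\kappa_{(i)}\delta^{(0)}$ $\P$-a.s., with $\kappa_{(i)}=0$ permitted when the regeneration time has infinite mean.
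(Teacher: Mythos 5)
The high-level geometry in your sketch — cut points inherited from $X^{(0)}$ persisting down the sequence, and a regeneration structure built on them — is sound, and indeed this is the spirit of the paper. But there are a couple of substantive gaps, and the argument for (a) is more involved than it needs to be.

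For part (a), the paper's route is cleaner and avoids your block-by-block one-dimensional drift argument entirely: it just bounds the effective resistance $R^{(i)}(0,\infty)$ from above by $\sum_n r^{(i)}(X^{(0)}_n,X^{(0)}_{n+1})\le C\sum_n\beta_{(i)}^{-X^{(0)}_n\cdot\ell^{(i)}}$, which is $\P$-a.s.\ finite by the LLN for $X^{(0)}$ and Condition~\ref{cond:1}(c), and the same bound applies to any injective path with vertices in $\mathcal{V}^{(1)}$ (Rayleigh monotonicity), hence also to $\mathcal{Z}^{(i)}$ for every $i$, including $i=\infty$. Directional transience then follows purely from $|X^{(i)}_n|\to\infty$ plus the deterministic geometry of $\mathcal{V}^{(1)}$ (any escaping nearest-neighbour path in $\mathcal{V}^{(1)}$ is transient in direction $\ell$). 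Your proposal instead estimates the ratio of forward/backward exit probabilities across blocks and invokes a 1D-RWRE criterion; this could probably be pushed through, but the claimed quantitative bound on the ratio is asserted rather than derived, and it is not clear it is uniform in $k$ since block lengths are random, so as written this is not a proof. It also does not obviously cover $i=\infty$ (the paper handles this case with a separate short argument showing $\mathcal{Z}^{(\infty)}$ is infinite).

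For part (b), the genuine gap is that you explicitly defer the construction (``Granted the construction, \dots''), and this is precisely where the work is. Moreover, your definition of the regeneration times $T^{(i)}_j$ --- first hitting times of cut points that \emph{$X^{(i)}$} never revisits --- is insufficient: the environment $\mathcal{Z}^{(i)}$ to the right of a cut vertex $X^{(0)}_{\tau_k}$ depends on the \emph{entire} trajectories of $X^{(0)},\dots,X^{(i-1)}$, not merely on whether $X^{(i)}$ returns, so conditioning on $X^{(i)}$ not returning does not decouple past from future. What you actually need is the paper's notion of \emph{uber-regeneration levels} $\mathcal{L}^{(i)}$: levels at which every one of $X^{(0)},\dots,X^{(i)}$ simultaneously regenerates, so that both the unexplored tail of $\mathcal{Z}^{(i)}$ and the future of $X^{(i)}$ are independent of the past. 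One must then prove: (i) $\P(k\in\mathcal{L}^{(i)})>0$ uniformly in $k$; (ii) $\mathcal{L}^{(i)}$ is a.s.\ infinite, via an explicit block construction (this is the paper's Lemma~\ref{lem:L1} and the lemma on $q_H$); and (iii) $\E[L^{(i)}_2-L^{(i)}_1]<\infty$ --- finiteness of the expected \emph{spatial} increment, which your sketch does not address but which is essential for the renewal LLN to give a well-defined speed. Finally, your claim that the conditional expectation of the spatial increment ``is proportional to $\delta^{(0)}$, inherited from $X^{(0)}$'' is not an argument; the paper instead establishes proportionality via Lemma~\ref{lem:funny}, which uses only that $X^{(i)}$ lives on the range of $X^{(0)}$ (writing $X^{(i)}_n=X^{(0)}_{M_n}$), and which in particular handles cleanly the case $\delta^{(0)}_j=0$ for some $j>1$, where a symmetric coordinate must be controlled separately.
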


\begin{REM}
Condition \ref{cond:1} is sufficient, but not necessary, to obtain the conclusions of Theorem \ref{thm:main1}.  Indeed, if $d=2$, and, for each $i$, $p^{(i)}(e_1)>0$ and $p^{(i)}(-e_1)=0$ (i.e.~$-e_1$ is a forbidden direction), and $p^{(i)}(e_2), p^{(i)}(-e_2)>0$, then Condition \ref{cond:1}(b) is violated but the conclusions of Theorem \ref{thm:main1} are relatively simple to obtain.
\end{REM}

Our second main result concerns conditions for ballisticity/sub-ballisticity. To state this, we need to introduce some further notation. Let
\begin{equation}\label{varphidef}
\varphi_{(i)}(t)=\E\left[\exp\{-tX^{(0)}_1\cdot \ell^{(i)}\}\right].
\end{equation}
It is then the case that there is a unique positive solution $t_{(i)}$ to $\varphi_{(i)}(t)=1$ (see Lemma \ref{backtrackprob} below), and we set
\[\alpha_{(i)}=\exp\{t_{(i)}\}.\]

\begin{THM}\label{thm:main2}
Assume Condition \ref{cond:1}.   Then for each $i\in \mathbb{N}$:
\begin{itemize}
\item[(a)] If $\beta_{(i)}<\alpha_{(i)}$, then $X^{(i)}$ is ballistic, i.e.\ $v^{(i)}\neq 0$.
\item[(b)] If $\beta_{(i)}>\alpha_{(i)}$,  then $X^{(i)}$ is sub-ballistic, i.e.\ $v^{(i)}=0$.
\end{itemize}
\end{THM}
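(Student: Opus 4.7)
The plan is to study $X^{(i)}$ via regeneration times $\tau^{(i)}_1 < \tau^{(i)}_2 < \cdots$ defined with respect to the direction $\ell^{(i)}$, so that the blocks between successive regenerations are i.i.d.\ under an appropriate joint law conditioned on the $\sigma$-algebra at the first regeneration. From Theorem~\ref{thm:main1}(a) together with a standard ratio-ergodic argument, ballisticity of $X^{(i)}$ is equivalent to $\E[\tau^{(i)}_2 - \tau^{(i)}_1] < \infty$, so both parts of the theorem reduce to sharp control of the tail of a single regeneration block length.

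The dominant contribution to the block length is the time that $X^{(i)}$ spends inside \emph{backtracking traps} of $\mc{Z}^{(i)}$, i.e.\ finger-like pieces of the trace that point against $\ell^{(i)}$. The constants $\alpha_{(i)}$ and $\beta_{(i)}$ govern the two relevant tails. Because $t_{(i)}$ solves $\varphi_{(i)}(t)=1$, the process $(\alpha_{(i)}^{-X^{(0)}_n\cdot\ell^{(i)}})_{n\ge 0}$ is a martingale, and a gambler's-ruin argument shows that the probability that an excursion of $X^{(0)}$ away from its running $\ell^{(i)}$-maximum reaches depth $k$ decays like $\alpha_{(i)}^{-k}$. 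On the other hand, since the conductances $c^{(i)}(x,x+e)$ scale as $\beta_{(i)}^{(x\vee(x+e))\cdot\ell^{(i)}}$, an effective-resistance / commute-time computation shows that the expected time for $X^{(i)}$, upon entering, to exit a trap of depth $k$ is of order $\beta_{(i)}^{k}$, up to polynomial factors in $k$.

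For part (a), the containment $\mc{Z}^{(i)}\subseteq\mc{Z}^{(0)}$ together with a monotonicity comparison bounds the expected time spent by $X^{(i)}$ inside a depth-$k$ trap by the analogous expected time on $\mc{Z}^{(0)}$, which is $O(k\beta_{(i)}^{k})$. Combining this with the trap-depth tail estimate, the expected regeneration block length satisfies
\[\E[\tau^{(i)}_2 - \tau^{(i)}_1]\;\lesssim\; \sum_{k\ge 0} k\,\beta_{(i)}^{k}\alpha_{(i)}^{-k},\]
which is finite whenever $\beta_{(i)}<\alpha_{(i)}$, yielding $v^{(i)}\ne 0$.

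For part (b), I would argue from below. By the matching Cram\'er lower bound, along any $n$-step segment of the trajectory of $X^{(0)}$ there are, $\P$-a.s., backtracking excursions of depth at least $k_n = (1+o(1))\log n/\log\alpha_{(i)}$. An inductive argument on $i$, using Condition~\ref{cond:1}(c) to guarantee that each intermediate walk $X^{(j)}$ with $j<i$ is itself directionally transient with respect to $\ell^{(i)}$, shows that with positive probability each $X^{(j)}$, upon entering such an excursion, preserves in $\mc{Z}^{(j+1)}$ a subpath inside the trap of length $\Theta(k_n)$ whose conductance ratio from top to bottom is still $\beta_{(i)}^{\Theta(k_n)}$. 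The expected time for $X^{(i)}$ to escape this remaining trap is then at least $\beta_{(i)}^{ck_n}\gg n$ whenever $\beta_{(i)}>\alpha_{(i)}$, and a Borel--Cantelli argument yields $\liminf_n n^{-1}X^{(i)}_n\cdot\ell^{(i)}=0$, forcing $v^{(i)}=0$. The hard part of the proof is precisely this inductive step: one must verify that the prior walks $X^{(0)},\dots,X^{(i-1)}$ do not collectively prune away enough of the trap structure in $\mc{Z}^{(i)}$ to destroy the escape-time lower bound, which is where Condition~\ref{cond:1}(c) and a careful conditional analysis at the entry point of each trap play their essential role.
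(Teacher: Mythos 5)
Your heuristic accounting is the right one: $\alpha_{(i)}$ controls how rare deep backtracks of $X^{(0)}$ are, $\beta_{(i)}$ controls how long $X^{(i)}$ spends in a trap of given depth, and the comparison of the two rates determines ballisticity. But both halves of your proposal have a genuine gap at the technical heart of the argument.

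For part (a), the ``monotonicity comparison'' bounding the trap-escape time for $X^{(i)}$ on $\mathcal{Z}^{(i)}$ by the analogous quantity on $\mathcal{Z}^{(0)}$ cannot work: $\mathcal{Z}^{(0)}=\mathbb{Z}^d$ has no traps at all, and in any case expected exit times are not monotone under passage to subgraphs (deleting edges can either lengthen or shorten exit times depending on which edges you delete). The paper instead estimates the expected crossing time directly on the random graph $\mathcal{Z}^{(i)}$: it bounds $\E[T^{(i,i)}_{n+1}-T^{(i,i)}_{n}]$ (the time for $X^{(i)}$ to advance one unit in the $\ell^{(i)}$ direction) by a commute-time-type quantity $R^{(i)}(x,\mathcal{Z}^{(i)}_{n+1})\sum_y c^{(i)}(y)$, controls the resistance via Rayleigh monotonicity along the path of $X^{(0)}$, and uses Lundberg's inequality (exactly your martingale observation) to show the relevant tail decays at rate $\alpha_{(i)}/\beta_{(i)}>1$. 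It then deduces $v^{(i)}\neq 0$ from $\E[T_n^{(i,i)}]\le Cn$ plus Markov's inequality, avoiding the need for a regeneration analysis of $X^{(i)}$ itself. The i.i.d.\ regeneration block structure you invoke for $X^{(i)}$ in direction $\ell^{(i)}$ is also not straightforward, since $\mathcal{Z}^{(i)}$ is a random graph without an obvious stationary/ergodic structure in that direction; the paper's regeneration machinery (Section 3) is in direction $e_1$ and uses uber-regeneration levels common to all walks, which is a different and more delicate construction.

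For part (b), you correctly identify as ``the hard part'' the need to show that the intermediate walks $X^{(1)},\dots,X^{(i-1)}$ do not prune away the trap structure, and propose an inductive argument. The paper sidesteps this entirely with a single clean idea: define a trap of height $h$ via \emph{cut-points} of $X^{(0)}$, i.e.\ points $x=X^{(0)}_m$, $y=X^{(0)}_n$ such that the past and future of $X^{(0)}$ meet only at those points, with $\lfloor(x-y)\cdot\ell^{(i)}\rfloor=h$. Since every subsequent walk $X^{(j)}$ must pass through both $x$ and $y$ in order to advance, the bottleneck survives \emph{automatically} in every $\mathcal{Z}^{(j)}$ -- no induction is needed, and the effective-resistance lower bound $R^{(i)}(x,y)\gtrsim\beta_{(i)}^{-y\cdot\ell^{(i)}}$ is immediate from a single edge at $y$. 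The remaining ingredient is a matching lower bound on the probability that a regeneration block of $X^{(0)}$ contains such a cut-point trap of height $h$, decaying no faster than $e^{-t_{(i)}h}$. Your appeal to ``the matching Cram\'er lower bound'' is much too quick here: one has to build a path that backtracks a distance $h$ in direction $\ell^{(i)}$ \emph{and} isolates two cut-points, and the paper does this with an explicit cylinder construction combined with Mogulskii's sample-path large deviations theorem, identifying the tilted drift $\hat{\delta}^{(i)}=\E[e^{-t_{(i)}X^{(0)}_1\cdot\ell^{(i)}}X^{(0)}_1]$ as the optimal backtracking direction. Without this, your Borel--Cantelli argument has no quantitative input for the trap-frequency lower bound.
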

Figure \ref{fig:bad} shows simulations of $v^{(1)}\cdot e_1$ for walks on $\Z^2$ where $p^{(0)}(e_1)=2/5$ and $p^{(0)}(e)=1/5$ otherwise, while $p^{(1)}(e_1)=r/(r+3)$ and $p^{(0)}(e)=1/(r+3)$ otherwise, as a function of $r \in [1,2.5]$.  Theorem \ref{thm:main2} shows that $v^{(1)}\cdot e_1=0$ when $r>2$ in this case.  While non-monotonicity is supported by the figure, the variability between realisations shows that $5\times 10^7$-step walks are insufficient to identify a phase transition by simulation.
\begin{figure}
\begin{center}
\vspace{-10pt}
\includegraphics[scale=.5]{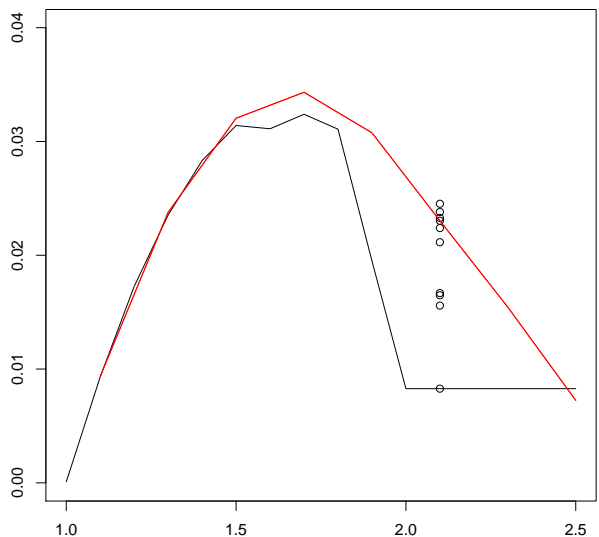}
\vspace{-40pt}
\end{center}
\caption{Two simulations of $v^{(1)}\cdot e_1$ (for walks on $\Z^2$ where $p^{(0)}(e_1)=2/5$ and $p^{(0)}(e)=1/5$ otherwise, and $p^{(1)}(e_1)=r/(r+3)$ and $p^{(0)}(e)=1/(r+3)$ otherwise), as a function of $r \in [1,2.5]$. The plot also shows 10 simulations of $v^{(1)}\cdot e_1$ when $r=2.1$.   Each point in the graph is calculated based on the endpoint of a $5\times 10^7$-step $p^{(1)}$-walk (on the trace of a $p^{(0)}$-walk).}
\label{fig:bad}
\end{figure}

As noted above, one interesting feature of our result is that we obtain an explicit representation for the critical point for our model defined on $\Z^d$. As we will describe in detail in Sections \ref{balsec} and \ref{subsec}, the parameter $\alpha_{(i)}$ is related to the decay rate of the probability the random walk $X^{(0)}$ backtracks a certain distance in direction $\ell^{(i)}$, an event which can potentially cause a trap for the $i$th walk. For further intuition about the relevance of this parameter, see the discussion prior to the proof of Lemma \ref{trapprob}, and the detailed construction of a trap for $X^{(i)}$ within the proof. To illustrate the simplicity of the condition in Theorem \ref{thm:main2}, we next present a canonical example, in which it is even possible to compute the parameters $\alpha_{(i)}$ analytically.

\begin{EXA}
\label{exa:canonical}
Suppose that for each $i\geq 0$ there exists a $k_i\in [d]$ and $\gamma_i>1$ such that
\[p^{(i)}(e)= \frac{1+(\gamma_i-1) \indic{e\in\{e_1,\dots,e_{k_i}\}}}{2d+k_i(\gamma_i-1)}.\]
We then have that
\[\delta^{(i)}_j=\frac{\gamma_i-1}{2d+k_i(\gamma_i-1)}\indic{j\le k_i}, \qquad \hat{\ell}^{(i)}_j=\log(\gamma_i)\indic{j\le k_i},\]
and the conductances are given by (\ref{conductances}) with
\[c_{(i),j}=1, \qquad \log \beta_{(i)}=\sqrt{k_i}\log\gamma_i.\]
In this case, it is straightforward to check that Condition \ref{cond:1} holds, and deduce that
\[v^{(0)}:=\lim_{n \ra \infty} \frac{X^{(0)}_n}{n}=\delta^{(0)}=\frac{\gamma_0-1}{2d+k_0(\gamma_0-1)}(e_1+e_2+\dots+e_{k_0}).\]
We moreover observe that the value of ${\alpha}_{(i)}$ can be computed explicitly, thus yielding completely transparent criteria for ballisticity/sub-ballisticity. Indeed, the equation $\varphi_{(i)}(t)=1$ can be rewritten
\begin{align*}
&{k_i\ee^{t/\sqrt{k_i}}+\ee^{-t/\sqrt{k_i}}\left(k_i+(k_0\wedge k_i)(\gamma_0-1)\right)+2\left(d-(k_0\vee k_i)\right)+\left((k_0\vee k_i) -k_i\right)(\gamma_0+1)}\\
&\hspace{300pt}=2d+k_0(\gamma_0-1).
\end{align*}
Multiplying up by $\ee^{t/\sqrt{k_i}}$, gives a quadratic in $\ee^{t/\sqrt{k_i}}$, which is readily solved to give that
\[\log\alpha_{(i)}=t_{(i)}=\sqrt{k_i}\log\left(1+\frac{k_i\wedge k_0}{k_i}\left(\gamma_0-1\right)\right).\]
(The other root of the quadratic is obtained by setting $t=0$.) Thus, the $i$th walk is ballistic if
\begin{equation}\label{gammacond}
k_i(\gamma_i-1)<(k_i\wedge k_0)(\gamma_0-1),
\end{equation}
and sub-ballistic if the reverse (strict) inequality is true. In particular, if $k_i\leq k_0$, then the condition \eqref{gammacond} simplifies further to $\gamma_i<\gamma_0$.
\end{EXA}

Thus far our results have depended only on relations between $\bs{p}^{(i)}$ and $\bs{p}^{(0)}$ for individual $i$.  Our final main result concerns the limiting graph $\mc{Z}^{(\infty)}$, and depends on the entire sequence $(\bs{p}^{(i)})_{i \in \Z_+}$.

\begin{THM}\label{thm:main3}
Assume Condition \ref{cond:1}.
\begin{itemize}
\item[(a)] If there exists some $\bs{p}^{(i)}$ such that $\bs{p}^{(i')}=\bs{p}^{(i)}$ for infinitely many $i'$, then $\mc{Z}^{(\infty)}$ is almost-surely a simple path.
\item[(b)] If for some $e\ne e_1$, and all $c\in(0,\infty)$,
\[\sum_{i=1}^\infty \frac{c^{(i)}(0,e_1)}{c^{(i)}(0,e)}\min\left\{1,(\beta_{(i)}^{c\delta^{(0)}\cdot \ell^{(i)}}-1)\beta_{(i)}^c\right\}<\infty,\]
then $\P(\mc{Z}^{(\infty)} \text{ is not simple})>0$.
\end{itemize}
\end{THM}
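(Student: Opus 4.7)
For part (a), my plan is to prove that almost surely every vertex of $\mc{Z}^{(\infty)}$ has graph-degree at most two, which, combined with the connectivity of each $\mc{Z}^{(i)}$ and the directional transience of Theorem~\ref{thm:main1}, forces $\mc{Z}^{(\infty)}$ to be an infinite simple path. Let $\bs{p}$ denote the bias that is repeated and let $i_1<i_2<\dots$ enumerate the indices with $\bs{p}^{(i_k)}=\bs{p}$. Fix $v\in\mathbb{Z}^d$ and three distinct edges $g_1,g_2,g_3$ incident to $v$; by a countable union over all such data it suffices to show
\[
\P\left(\{g_1,g_2,g_3\}\subseteq \mc{E}^{(\infty)}\right)=0.
\]
The key estimate is that, conditional on $\mc{Z}^{(i_k)}$ containing $g_1,g_2,g_3$, there exists a constant $\eta=\eta(v,\bs{p})>0$, independent of the realisation of $\mc{Z}^{(i_k)}$, such that
\[
\P\left(\{g_1,g_2,g_3\}\not\subseteq\mc{E}^{(i_k+1)}\,\big|\,\mc{Z}^{(i_k)}\right) \ge \eta.
\]
Iterating this conditional bound and letting the number of trials tend to infinity, together with the monotonicity $\mc{E}^{(i+1)}\subseteq\mc{E}^{(i)}$, gives the claim via a conditional second Borel--Cantelli argument.

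To obtain the uniform $\eta>0$, I would consider the event that $X^{(i_k)}$ either does not visit $v$ at all, or visits $v$ exactly once and thereafter escapes to infinity in direction $\ell^{(i_k)}=\ell(\bs{p})$ without returning; in either case at most two of the three edges at $v$ are used. The escape probability is bounded below uniformly via the conductance representation \eqref{conductances}: the effective conductance from $v$ to infinity in $\mc{Z}^{(i_k)}$ in direction $\ell(\bs{p})$ is at least that of a specific forward geodesic present in $\mc{Z}^{(i_k)}$, whose contribution grows geometrically in $\beta(\bs{p})$ and therefore dominates the return contribution by a margin depending only on $\bs{p}$. The main obstacle for (a) is making this escape estimate uniform over the random environments $\mc{Z}^{(i_k)}$, and it is precisely the repetition of the same $\bs{p}$ that is essential here: a single fixed $\bs{p}$ singles out one escape direction whose conductance certificate is valid for every $i_k$ simultaneously.

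For part (b), my plan is to exhibit an explicit 4-cycle $L$ in $\mathbb{Z}^d$ that, with positive probability, lies in $\mc{E}^{(\infty)}$, which immediately precludes $\mc{Z}^{(\infty)}$ from being simple. For the direction $e\ne e_1$ in the hypothesis (taking also $e\ne -e_1$, available since $d\ge 2$), let $L$ be the 4-cycle on vertices $\{0,e,e+e_1,e_1\}$. First, $\P(L\subseteq\mc{E}^{(1)})>0$, which follows by prescribing the first few steps of $X^{(0)}$ to traverse all four edges (e.g.\ steps $e,e_1,-e,e_1$) before the walk escapes in direction $\delta^{(0)}$; this event has positive probability by the non-degeneracy of $\bs{p}^{(0)}$ and Theorem~\ref{thm:main1}. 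Second, conditional on $L\subseteq\mc{E}^{(i)}$, I would bound
\[
f_i:=\P\left(L\not\subseteq\mc{E}^{(i+1)}\,\big|\,L\subseteq\mc{E}^{(i)},\mc{Z}^{(i)}\right)
\]
by a constant multiple of the $i$th summand in the hypothesis: the factor $c^{(i)}(0,e_1)/c^{(i)}(0,e)$ reflects the per-visit cost of the walk taking an $e$-step at $0$ (or symmetrically at $e_1$), while the factor $\min\{1,(\beta_{(i)}^{c\delta^{(0)}\cdot\ell^{(i)}}-1)\beta_{(i)}^c\}$ controls the expected number of such attempts via backtracking estimates for the biased walk on $\mc{Z}^{(i)}$, analogous to the analysis underlying Theorem~\ref{thm:main2}. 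Given the summability assumption, the first Borel--Cantelli lemma combined with the positive probability in the first step yields $\P(L\subseteq\mc{E}^{(i)}\text{ for all }i)>0$, whence $\P(L\subseteq\mc{E}^{(\infty)})>0$. The principal obstacle for (b) is matching $f_i$ to the explicit summand, which requires a careful electrical-network estimate combining the one-step conductance ratio at $0$ and $e_1$ with a bound on the number of returns of $X^{(i)}$ to these vertices, essentially recovering the backtracking-rate input that governs the ballisticity threshold in Theorem~\ref{thm:main2}.
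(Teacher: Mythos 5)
Your overall architecture is similar to the paper's -- survival-of-a-local-obstruction for part (b), and a Borel--Cantelli iteration over the repeated bias for part (a) -- but both parts use genuinely different local targets and decompositions than the paper, and both have gaps that need attention.

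For part (a), the paper does not try to show directly that every vertex of $\mathcal{Z}^{(\infty)}$ has degree at most two. Instead it fixes the regeneration times $(\mathcal{T}_j)$ of $X^{(0)}$ in direction $e_1$, observes that the regeneration edges $(X^{(0)}_{\mathcal{T}_j-1},X^{(0)}_{\mathcal{T}_j})$ persist in every $\mathcal{Z}^{(i)}$, and shows that each block of $\mathcal{Z}^{(i)}$ between consecutive regeneration points eventually becomes, and then remains, a simple path (there being only finitely many possible configurations per block, plus a lower bound on the escape probability at each regeneration point). The blocks glue end-to-end to form the limiting simple path. Your degree-$\le 2$ plan is a plausible alternative, but it has a real gap that the paper's route sidesteps: after establishing that degrees are at most two, you still need $\mathcal{Z}^{(\infty)}$ to be \emph{connected}, and a nested intersection of connected graphs need not be connected. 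This is fixable -- between consecutive regeneration points the graphs $\mathcal{Z}^{(i)}$ restricted to that slab form a decreasing sequence of \emph{finite} connected graphs, hence stabilise, so $\mathcal{Z}^{(\infty)}$ restricted to each slab is connected and they concatenate -- but that argument is essentially the paper's, so you do not gain much by the detour. A smaller issue: the constant $\eta$ you want cannot be uniform over all realisations of $\mathcal{Z}^{(i_k)}$, since the resistance from $v$ to infinity is a genuine random variable; it can, however, be made $X^{(0)}$-measurable via the bound $\sum_n r^{(i)}(X^{(0)}_n,X^{(0)}_{n+1})<\infty$ from the transience proof, and a conditional Borel--Cantelli (conditional on $X^{(0)}$) then does the job. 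You should say this explicitly.

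For part (b), the paper's target is deliberately a leaf, not a cycle. The event $E_0$ prescribes $X^{(0)}_1=-e_1,X^{(0)}_2=0,X^{(0)}_3=e,X^{(0)}_4=0,X^{(0)}_5=e_1$ with $5$ a regeneration time, so in $\mathcal{Z}^{(1)}$ the vertex $e$ is a degree-one leaf hanging off $0$; the paper then tracks whether each $X^{(i)}$ visits $e$, and the per-step failure probability $r^{(i)}(0,e)/(r^{(i)}(0,e)+R_{(i)}(0,\infty))$ is bounded exactly by the summand using the geometric lower bound on $R_{(i)}(0,\infty)$ coming from regeneration levels. Your plan of exhibiting a surviving $4$-cycle $L$ is attractive because a cycle immediately forces non-simplicity, but the crucial estimate $f_i\lesssim$ summand is asserted rather than derived, and a $4$-cycle is a much more fragile target than a leaf. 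In particular, there is a failure mode you have not accounted for and that the summand does not obviously control: $X^{(i)}$ can take the route $0\to e\to e+e_1\to e_1\to 2e_1\to\cdots$ and then never return to $\{0,e,e+e_1,e_1\}$; this traverses three of the four edges of $L$ but misses $(0,e_1)$, and the probability it happens is a fixed positive fraction of the escape probability from a level-$1$ vertex, not simply the conductance ratio $c^{(i)}(0,e_1)/c^{(i)}(0,e)$ that your heuristic attributes to taking an $e$-step. You would need to show that \emph{all} such partial-traversal modes are dominated by the summand, which is substantially more work than the leaf case and may in fact fail. Finally, a small but concrete slip: the prescribed initial path with steps $e,e_1,-e,e_1$ sends $X^{(0)}$ along $0\to e\to e+e_1\to e_1\to 2e_1$, which traverses only three of the four cycle edges (the edge $(0,e_1)$ is never used); you would need something like $0\to e_1\to 0\to e\to e+e_1\to e_1\to 2e_1$ followed by a regeneration to place $L$ in $\mathcal{E}^{(1)}$.
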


The following is an example of a sequence for which all but finitely many $X^{(i)}$ are ballistic in direction $e_1$, but $\mc{Z}^{(\infty)}$ is almost-surely not a simple path. Note that the random walks become increasingly symmetric as $i\rightarrow\infty$, and as a result spend increasingly long times in finite sections of the graph.

\begin{EXA}
Let $d=2$, and $\bs{p}^{(0)}$ be chosen so that $\delta^{(0)}=\frac{1}{2} e_1$. Set \[p^{(i)}(e_1)=\frac14+\vep_i,\qquad p^{(i)}(-e_1)=\frac14-\vep_i,\qquad p^{(i)}(e_2)=p^{(i)}(-e_2)=\frac14,\]
where $\varepsilon_i\in(0,\frac14)$ for each $i$, and $\sum_{i=1}^{\infty}\varepsilon_i<\infty$. Observe that $\ell^{(i)}=e_1$ (for each $i$) since
\[\hat{\ell}^{(i)}=\left(\log((1+4\vep_i)/(1-4\vep_i)),0\right),\]
so $\alpha_{(i)}=\alpha>1$.  Moreover, $\beta_{(i)}=(1+4\vep_i)/(1-4\vep_i)\to 1$, so Theorem \ref{thm:main2}(a) applies for all but finitely many $i$.  With $e=e_2$, the summand in Theorem \ref{thm:main3}(b) is equal to
\[4\left(\frac14+\vep_i\right)\left(\left(\frac{1+4\vep_i}{1-4\vep_i}\right)^{c/2}-1\right)\left(\frac{1+4\vep_i}{1-4\vep_i}\right)^{c},\]
which is asymptotic to $4c\varepsilon_i$ as $i\rightarrow\infty$, and hence Theorem \ref{thm:main3}(b) applies.
\end{EXA}

In our second example, we illustrate an alternative way in which it might transpire that $\mathcal{Z}^{(\infty)}$ is not simple. In particular, this is when we have an increasingly large drift into traps.

\begin{EXA}
Let $d=2$, and $\bs{p}^{(0)}$ be chosen so that $\delta^{(0)}=\frac{1}{2} e_1$. Set
\[p^{(i)}(e_1)=\frac{1}{2}\varepsilon_i,\qquad p^{(i)}(-e_1)=\frac{1}{4}\varepsilon_i,\qquad p^{(i)}(-e_2)=\frac{1}{4}\varepsilon_i,\qquad p^{(i)}(e_2)=1-\varepsilon_i,\]
where $\varepsilon_i\in(0,1)$ for each $i$, and $\sum_{i=1}^{\infty}\varepsilon_i<\infty$.  Here, $\ell^{(i)}\to e_2$, $\beta_{(i)} \to \infty$ and $\alpha_{(i)}\to \alpha\geq1$ as $i\to \infty$, but $\ell^{(i)}\cdot \delta^{(0)}>0$ for each $i$.  Thus, all walks are transient and all but finitely many are sub-ballistic. We then have that, again with $e=e_2$, the summand in Theorem \ref{thm:main3}(b) is bounded above by
\[\frac{\vep_i}{2(1-\vep_i)},\]
which is summable.
\end{EXA}

\subsection{Open problems}

We now collect some open problems that arise from the present work.

\begin{OPEN}
The regeneration techniques we apply to deduce the existence of velocities in Theorem \ref{thm:main1} do not immediately extend to the $i=\infty$ case.  Prove the existence of a deterministic velocity in this case.
\end{OPEN}

\begin{OPEN}
Theorem \ref{thm:main2} gives a phase transition for ballisticity, but it does not resolve the behaviour of the walk at the phase transition. If we assume Condition \ref{cond:1}, then is it the case that $X^{(i)}$ is sub-ballistic when $\beta_{(i)}=\alpha_{(i)}$?
\end{OPEN}

\begin{OPEN}
As has been studied for Galton-Watson trees and percolation clusters, it is natural to consider the scaling limit of the walk $X^{(i)}$. In particular, we would expect that the asymptotic behaviour of the process is determined by the parameter $\gamma_{(i)}:=\min\{2,\alpha_{(i)}/\beta_{(i)}\}$. In particular, if $\gamma_{(i)}>2$, then we would expect to see a central limit theorem for $X^{(i)}\cdot\delta^{(0)}$ around its mean behaviour. For $\gamma_{(i)}\in(1,2)$, then we would expect to see fluctuations related to a $\gamma_{(i)}$-stable distribution around the mean, and for $\gamma_{(i)}\in(0,1)$ we would expect that $X^{(i)}\cdot\delta^{(0)}$ behaves like the inverse of a $\gamma_{(i)}$-stable subordinator. Moreover, in the latter cases, due to a lattice effect that is also seen in other models, it might be anticipated that the relevant scaling limits only hold along subsequences. See \cite{Bowditch} for recent progress in the Galton-Watson tree case, and \cite{BAF} for a discussion of results and conjectures in the percolation case.
\end{OPEN}

\begin{OPEN}
How does the speed $v^{(i)}$ vary as a function of $\bs{p}^{(i)}$? Is it continuous? Where is it maximised? Is it non-monotonic and unimodal as a function of $\beta_{(i)}$ (for other parameters fixed)?
\end{OPEN}

\begin{OPEN} Perpendicular to the previous question, one might consider how $v^{(i)}$ varies with $i$. To give one concrete question in this direction, suppose that $X^{(1)}$ is ballistic, and set $\bs{p}^{(i)}=\bs{p}^{(1)}$ for every $i\ge 2$. Is it the case that the speeds are increasing in $i$?
\end{OPEN}

\begin{OPEN} We believe that solving these problems would be simpler on trees. To what extent is it possible to answer any of the above questions if the original walk has as its state space a regular tree or a supercritical Galton-Watson tree conditioned to survive, for example?
\end{OPEN}

\begin{OPEN} Theorem \ref{thm:main3} gives sufficient criteria for $\mathcal{Z}^{(\infty)}$ almost-surely being a simple graph, or this not being the case. Is it possible to give a sharp criterion for simplicity of $\mc{Z}^{(\infty)}$ in terms of basic properties of the transition probabilities $\bs{p}^{(i)}$, $i\in\mathbb{Z}_+$?
\end{OPEN}

\begin{OPEN}
If $\bs{p}^{(i)}=\bs{p}^{(0)}$ for every $i$, then $\mathcal{Z}^{(\infty)}$ is a simple path by Theorem \ref{thm:main3}(a).  Is there a relationship between $\mathcal{Z}^{(\infty)}$ and the law of a $\bs{p}^{(0)}$ loop-erased random walk?
\end{OPEN}

\subsection{Organisation and notational conventions}

The remainder of the article is organised as follows. In Section \ref{transsec}, we characterise recurrence and transience, proving Theorem \ref{thm:main1}(a) in particular. This is followed in Section \ref{sec:velocity} by a proof of Theorem \ref{thm:main1}(b), namely the existence of deterministic velocities, which utilises a particular regeneration structure for the multiple walks. Sections \ref{balsec} and \ref{subsec} contain the proof of Theorem \ref{thm:main2}, with the first of these sections describing the ballistic regime, and the second the sub-ballistic one. Finally, in Section \ref{zinftysec}, we study the graph $\mathcal{Z}^{(\infty)}$.

Regarding notational conventions, $c$ and $C$ are constants that may change from line-to-line.

\section{Transience}\label{transsec}

The aim of this section is to prove Theorem \ref{thm:main1}(a), that is, we will demonstrate transience of the random walks $X^{(i)}$ under Condition \ref{cond:1}. In the proof, we write $R^{(i)}$ for the effective resistance metric on $\mathcal{Z}^{(i)}$ when conductances are given by \eqref{conductances}. (For background on effective resistance, see \cite[Chapter 2]{Barlow} or \cite[Chapter 2]{LP}, for example.) We also write
\[r^{(i)}(x,y)=\frac{1}{c^{(i)}(x,y)}\]
for the corresponding edge resistances.

\begin{proof}[{\bf \em Proof of Theorem \ref{thm:main1}(a)}] From the law of large numbers, we $\P$-a.s.\ have that
\begin{equation}\label{lln}
\lim_{n\rightarrow\infty}\frac{X^{(0)}_n}{n}=\E[X_1^{(0)}]=\delta^{(0)}.
\end{equation}
It follows that for any nearest-neighbour path $\pi=(\pi_n)_{n\geq 0}$ with $\pi_0=0$, $\pi_n\in \mathcal{V}^{(1)}$ and $|\pi_n|\rightarrow\infty$,
\begin{equation}\label{pitrans}
\lim_{n\rightarrow \infty}\pi_n\cdot \ell=\infty,
\end{equation}
for every $\ell$ such that $\ell\cdot\delta^{(0)}>0$, i.e.\ $\pi$ is transient in the direction $\ell$. Moreover, taking the supremum over such paths that are also injective, we have for any $i\in\mathbb{N}\cup\{\infty\}$ that (using \eqref{conductances}),
\[\sup_{\pi}\sum_{n=0}^\infty r^{(i)}\left(\pi_n,\pi_{n+1}\right)\leq
\sum_{n=0}^\infty r^{(i)}\left(X_n^{(0)},X_{n+1}^{(0)}\right)\leq C_{(i)}\sum_{n=0}^\infty \beta_{(i)}^{-X^{(0)}_n\cdot \ell^{(i)}}\]
where $C_{(i)}$ is a deterministic constant. Now, by \eqref{lln} and the assumption that $\delta^{(0)}\cdot\ell^{(i)}>0$, we have that, $\P$-a.s., for large $n$,
\[\beta_{(i)}^{-X^{(0)}_n\cdot \ell^{(i)}}\leq \beta_{(i)}^{-n\delta^{(0)}\cdot \ell^{(i)}/2},\]
and so we conclude that, $\P$-a.s.,
\begin{equation}\label{pisum}
\sup_{\pi}\sum_{n=0}^\infty r^{(i)}\left(\pi_n,\pi_{n+1}\right)<\infty.
\end{equation}

We use the preceding deductions as the basis of an inductive argument. To begin with, observe that Rayleigh's monotonicity principle \cite[Chapter 2]{LP} implies that for any injective path $\pi$ of the form considered in the previous paragraph, we have that, $\P$-a.s.,
\[R^{(1)}(0,\infty)\leq \sum_{n=0}^\infty r^{(1)}\left(\pi_n,\pi_{n+1}\right)<\infty.\]
Hence, by \cite[Theorem 2.11]{Barlow}, $\mc{Z}^{(1)}$ is $\P$-a.s.\ a $\bs{p}^{(1)}$-transient graph. Suppose now that we have shown that $\mc{Z}^{(i)}$ is $\P$-a.s.\ a $\bs{p}^{(i)}$-transient graph for some $i\geq 1$. By definition, this implies that $|X^{(i)}_n|\rightarrow \infty$, $\P$-a.s., and so $\mc{Z}^{(i+1)}$ contains at least one injective path $\pi$ from $0$ to $\infty$ with vertices in $\mathcal{V}^{(i)}\subseteq \mathcal{V}^{(1)}$, $\P$-a.s. Thus, from \eqref{pisum}, $\P$-a.s.,
\[R^{(i+1)}(0,\infty)\leq \sum_{n=0}^\infty r^{(i+1)}\left(\pi_n,\pi_{n+1}\right)<\infty,\]
and so $\mc{Z}^{(i+1)}$ is a $\bs{p}^{(i+1)}$-transient graph. We have therefore established that for each $i\in \N$, $\mc{Z}^{(i)}$ is a $\bs{p}^{(i)}$-transient graph $\P$-a.s.

We now deal with the $i=\infty$ case. In particular, it must hold that $\mc{Z}^{(\infty)}$ is an infinite graph, $\P$-a.s. Indeed, if this is not the case, then it must hold that there is a strictly positive probability that one of the graphs $\mc{Z}^{(i)}$, $i\in\mathbb{Z}_+$ is finite, and this clearly contradicts the conclusion of the previous paragraph. As a consequence, we know that, $\P$-a.s., $\mc{Z}^{(\infty)}$ contains at least one injective path $\pi$ from $0$ to $\infty$ with vertices in $\mathcal{V}^{(1)}$, and we can use our previous argument to deduce its transience.

To complete the proof, we are required to check transience in the direction $\ell$. To this end, we observe that the transience we have already established yields that $|X^{(i)}_n|\rightarrow \infty$ for every $i\in \mathbb{Z}_+\cup\{\infty\}$, $\P$-a.s. Appealing to \eqref{pitrans} (and also \eqref{lln}), we readily see that this implies $X^{(i)}_n\cdot \ell\rightarrow \infty$ for every $i\in\mathbb{Z}_+\cup\{\infty\}$ and $\ell$ such that $\ell\cdot\delta^{(0)}>0$, $\P$-a.s.
\end{proof}

By a minor adaptation of the previous argument, it is possible to check, as we do in Lemma \ref{lem:rec} below, that the condition $\delta^{(0)}\cdot\ell^{(i)}>0$ is in fact necessary for transience. To proceed with this, and for later in the paper, it will be helpful to introduce a regeneration structure for $X^{(0)}$ in the direction $\ell$, where $\ell$ can be chosen to be any vector satisfying $\ell\cdot\delta^{(0)}>0$. More precisely, let $\mathcal{T}^{\ell}_j$ denote the $j$th regeneration time for $X^{(0)}$ in the direction $\ell$, i.e.\ the $j$th time that $X^{(0)}\cdot\ell$ hits a new maximum, from which it does not return to a lower level in the future. We then have the following standard result.

\begin{LEM}\label{regenlem} Assume the parts of Condition \ref{cond:1} regarding $\bs{p}^{(0)}$. For any $\ell$ satisfying $\ell\cdot\delta^{(0)}>0$, the following statements hold.\\
(i) $\mathbb{P}$-a.s., the regeneration times $(\mathcal{T}^{\ell}_j)_{j=1}^\infty$ are all finite.\\
(ii) The path segments
\[\left(X^{(0)}_{\mathcal{T}^{\ell}_j+n}-X^{(0)}_{\mathcal{T}^{\ell}_j}\right)_{n=0}^{\mathcal{T}^{\ell}_{j+1}-\mathcal{T}^{\ell}_{j}},\qquad j\geq 1,\]
are independent and identically distributed.\\
(iii) $\mathbb{P}$-a.s., it holds that
\[\lim_{j\rightarrow\infty}\frac{\mathcal{T}_j^{\ell}}{j}=C,\]
where $C\in[1,\infty)$ is a deterministic constant (depending on $i$).
\end{LEM}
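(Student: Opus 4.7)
My plan is to follow the classical regeneration-time construction (as developed, for example, in the RWRE context by Sznitman and Zerner), which simplifies considerably here since $X^{(0)}$ is an i.i.d.\ random walk with bounded increments and positive drift $\delta^{(0)}\cdot\ell>0$ in the direction $\ell$. I would first establish the key preliminary estimate that the first-backtrack time
\[D^{\ell}:=\inf\left\{n\geq 1:\, X^{(0)}_n\cdot\ell < X^{(0)}_0\cdot\ell\right\}\]
satisfies $\P(D^{\ell}=\infty)>0$. Since $(X^{(0)}_n\cdot\ell - n\delta^{(0)}\cdot\ell)_{n\geq 0}$ is a martingale with bounded increments, Azuma's inequality yields $\sum_{n\geq 1}\P(X^{(0)}_n\cdot\ell\leq -L)<\infty$ with sum tending to $0$ as $L\to\infty$. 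Hence for $L$ sufficiently large, the probability that $X^{(0)}\cdot\ell$ started from $0$ ever reaches level $-L$ is less than, say, $\tfrac12$; combining this with the strong Markov property at the first time the walk exceeds level $L$ (which is almost surely finite by transience in direction $\ell$) and prepending a deterministic high-probability initial segment that pushes $X^{(0)}\cdot\ell$ above $L$, one obtains $\P(D^{\ell}=\infty)>0$.

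For part (i), I would construct $\mathcal{T}^{\ell}_1$ by the familiar iterative procedure. Let $S_1$ be the first time $X^{(0)}\cdot\ell$ strictly exceeds $X^{(0)}_0\cdot\ell$; by the strong Markov property at $S_1$ and the preliminary estimate, the event that the walk never again returns below $X^{(0)}_{S_1}\cdot\ell$ has positive conditional probability. If this event occurs, set $\mathcal{T}^{\ell}_1=S_1$; otherwise, let $S_2$ be the first subsequent time at which $X^{(0)}\cdot\ell$ reaches a new maximum, and iterate. A standard geometric-trials argument then gives $\mathcal{T}^{\ell}_1<\infty$ almost surely, and the finiteness of every $\mathcal{T}^{\ell}_j$ follows by iteration using the renewal structure from part (ii).

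Part (ii) is then a direct consequence of the strong Markov property: conditional on the $\sigma$-algebra generated by the walk up to $\mathcal{T}^{\ell}_j$, the segment $(X^{(0)}_{\mathcal{T}^{\ell}_j+n}-X^{(0)}_{\mathcal{T}^{\ell}_j})_{n\geq 0}$ has the law of a $\bs{p}^{(0)}$-random walk started at $0$ and conditioned on $\{D^{\ell}=\infty\}$. By translation invariance of the step distribution, this conditional law is the same for every $j\geq 1$, so the segments up to successive regeneration times are independent and identically distributed. The only subtlety is the careful handling of the conditioning on the never-return event, which is standard.

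Finally, granted (ii), part (iii) reduces to the strong law of large numbers applied to the i.i.d.\ sequence $(\mathcal{T}^{\ell}_{j+1}-\mathcal{T}^{\ell}_j)_{j\geq 1}$, yielding $\mathcal{T}^{\ell}_j/j \to C$ almost surely with $C:=\E[\mathcal{T}^{\ell}_2-\mathcal{T}^{\ell}_1]\geq 1$. The main technical obstacle is showing $C<\infty$. I would argue that $\mathcal{T}^{\ell}_2-\mathcal{T}^{\ell}_1$ is stochastically dominated by a geometrically distributed number of ``failed regeneration attempts'', the duration of each controlled by the return time of $X^{(0)}\cdot\ell$ to a lower level conditional on that return being finite. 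Azuma's inequality then provides exponential tails for both the number and lengths of the attempts, delivering not just $C<\infty$ but in fact $\E\bigl[e^{c(\mathcal{T}^{\ell}_2-\mathcal{T}^{\ell}_1)}\bigr]<\infty$ for some $c>0$.
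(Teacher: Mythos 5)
Your proposal is correct, but it takes a genuinely different route from the paper. The paper is deliberately brief: for parts (ii) and (iii) it simply cites the Sznitman--Zerner regeneration machinery developed for random walk in random environment (RWRE), invoking \cite[Theorem 1.4]{SZ} for the i.i.d.\ structure of the regeneration segments and then \cite[Proposition 2.4]{SZ} together with \cite[(2.32)]{SZ} to obtain, via Kalikow's condition, that $j^{-1}X^{(0)}_{\mathcal{T}^{\ell}_j}\cdot\ell\to C<\infty$ almost surely; dividing by $X^{(0)}_{\mathcal{T}^{\ell}_j}\cdot\ell/\mathcal{T}^{\ell}_j\to\delta^{(0)}\cdot\ell>0$ then yields $j^{-1}\mathcal{T}^{\ell}_j\to C/(\delta^{(0)}\cdot\ell)$. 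You instead recognise that in this setting $X^{(0)}$ is an honest i.i.d.\ random walk with bounded increments, so the RWRE machinery is avoidable: Azuma's inequality applied to the martingale $X^{(0)}_n\cdot\ell - n\delta^{(0)}\cdot\ell$ gives directly $\P(D^{\ell}=\infty)>0$ and exponential tails for the backtrack times, which in turn give geometric control on the number of failed regeneration attempts and hence finite (indeed exponential) moments of $\mathcal{T}^{\ell}_2-\mathcal{T}^{\ell}_1$, so that part (iii) is the SLLN for the i.i.d.\ increments $(\mathcal{T}^{\ell}_{j+1}-\mathcal{T}^{\ell}_j)_{j\ge1}$. What the paper's route buys is brevity, since the RWRE theorems apply a fortiori to a deterministic environment; what your route buys is self-containment and, as you note, a stronger conclusion (exponential moments for inter-regeneration times) that the paper's citation chain does not directly supply. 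One small remark: when assembling the claim $\sup_k \E[e^{c(S_{k+1}-D_k)}\,|\,\mathcal F_{D_k}]<\infty$ for the inter-attempt durations, one should be slightly careful that the distance from the backtrack point $X^{(0)}_{D_k}$ to the previous record is itself controlled (it is, by the exponential tail of $D_k-S_k$ on $\{D_k<\infty\}$, again via Azuma), so the time to reach the next record is not uniformly subexponential attempt-by-attempt but the total is — this is standard but worth stating explicitly if the proof is to be written in full.
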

\begin{proof} Note that the Markov property and the directional transience of $X^{(0)}$ imply that once $X^{(0)}\cdot \ell$ hits a new maximum, there is strictly positive probability that it does not return to a lower level in the future. Part (i) of the lemma readily follows. The remaining parts of the lemma are obtained by an application of results from \cite{SZ}. Specifically, part (ii) is a consequence of \cite[Theorem 1.4]{SZ}. Moreover, Condition \ref{cond:1} ensures that the uniform ellipticity condition (0.1) and the drift condition (2.37) of \cite{SZ} hold for $X^{(0)}$ with respect to the direction $\ell$. By \cite[Proposition 2.4]{SZ}, this gives us that Kalikow's condition \cite[(0.7)]{SZ} holds, and so we can apply equation \cite[(2.32)]{SZ} to deduce that, $\mathbb{P}$-a.s.,
\[\lim_{j\rightarrow\infty}\frac{X^{(0)}_{\mathcal{T}_j^{\ell}}\cdot\ell}{j}=C,\]
for some finite deterministic constant $C$. Together with the almost-sure result that $n^{-1}X^{(0)}_n\rightarrow \delta^{(0)}$, part (iii) of the lemma follows.
\end{proof}

We now confirm that if $\delta^{(0)}\cdot\ell^{(i)}\leq 0$, then the $i$th walk is recurrent. Since this is not central to the main results of this paper, we are somewhat brief in the proof.

\begin{LEM}\label{lem:rec} Assume Condition \ref{cond:1}(a) and \ref{cond:1}(b) hold, but Condition \ref{cond:1}(c) does not, i.e.\ $\delta^{(0)}\cdot\ell^{(i)}\le 0$ for some $i\in\mathbb{N}\cup\{\infty\}$. It then holds that the random walk $X^{(i)}$ is recurrent.
\end{LEM}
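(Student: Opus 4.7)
The plan is to show that $R^{(i)}(0,\infty)=\infty$ for the network $\mc{Z}^{(i)}$ with conductances $c^{(i)}$, since this is equivalent to recurrence of $X^{(i)}$. Because $\mc{Z}^{(i)}\subseteq\mc{Z}^{(1)}$, Rayleigh's monotonicity principle reduces the task to proving $R^{(i)}(0,\infty)=\infty$ in the (larger) network obtained by placing the conductances $c^{(i)}$ on $\mc{Z}^{(1)}$. To bound this effective resistance from below I would apply the Nash--Williams inequality, using cutsets extracted from the regeneration structure of $X^{(0)}$ in a direction $\ell_0$ with $\ell_0\cdot\delta^{(0)}>0$ (for example $\ell_0=\delta^{(0)}/\|\delta^{(0)}\|$), which is available via Lemma \ref{regenlem}.

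Let $v_j:=X^{(0)}_{\mathcal{T}_j^{\ell_0}}$ denote the regeneration points. By the definition of $\mathcal{T}_j^{\ell_0}$, the trajectory $(X^{(0)}_n)_{n\leq\mathcal{T}_j^{\ell_0}}$ lies in the closed half-space $\{x:x\cdot\ell_0\leq v_j\cdot\ell_0\}$, touching the boundary hyperplane only at the final step, while $(X^{(0)}_n)_{n>\mathcal{T}_j^{\ell_0}}$ lies in $\{x:x\cdot\ell_0>v_j\cdot\ell_0\}$. Thus $X^{(0)}$ visits $v_j$ exactly once, so $v_j$ has just two incident edges in $\mc{Z}^{(1)}$, of which only $e_j:=(v_j,X^{(0)}_{\mathcal{T}_j^{\ell_0}+1})$ crosses the hyperplane; removing $e_j$ leaves $0$ in a finite component. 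Hence $\{e_j\}$ is a single-edge cutset separating $0$ from $\infty$ in $\mc{Z}^{(1)}$, and these cutsets are pairwise disjoint. From \eqref{conductances}, the conductance $c^{(i)}(e_j)$ is bounded above by $C\beta_{(i)}^{v_j\cdot\ell^{(i)}}$ for a deterministic constant $C$, and Nash--Williams therefore yields
\[R^{(i)}(0,\infty)\geq C^{-1}\sum_{j\geq 1}\beta_{(i)}^{-v_j\cdot\ell^{(i)}}.\]

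It remains to verify that this series diverges almost surely. Combining Lemma \ref{regenlem}(iii) with $n^{-1}X^{(0)}_n\to\delta^{(0)}$ gives $v_j/j\to C'\delta^{(0)}$ a.s.\ for some $C'\in[1,\infty)$, while Lemma \ref{regenlem}(ii) shows that $(v_{j+1}-v_j)_{j\geq 1}$ are i.i.d.; consequently the projected increments $\xi_k:=(v_{k+1}-v_k)\cdot\ell^{(i)}$ are i.i.d.\ with mean $C'\delta^{(0)}\cdot\ell^{(i)}\leq 0$ (their integrability follows from the exponential tails of the regeneration times provided by Kalikow's condition). If $\delta^{(0)}\cdot\ell^{(i)}<0$, then $v_j\cdot\ell^{(i)}$ tends to $-\infty$ linearly in $j$ and the summands grow exponentially, forcing divergence. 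In the borderline case $\delta^{(0)}\cdot\ell^{(i)}=0$, the process $S_j:=v_j\cdot\ell^{(i)}$ is (up to a constant) a centred one-dimensional random walk, so by Chung--Fuchs $\liminf_j S_j=-\infty$ a.s.\ (or $S_j$ is identically constant in the degenerate case), whence $\beta_{(i)}^{-S_j}\geq 1$ for infinitely many $j$ and the series again diverges. Thus $R^{(i)}(0,\infty)=\infty$ and $X^{(i)}$ is recurrent.

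The main technical subtlety is the borderline case $\delta^{(0)}\cdot\ell^{(i)}=0$, where the elementary exponential bound used when $\delta^{(0)}\cdot\ell^{(i)}<0$ is unavailable and one must appeal to the recurrence of centred one-dimensional random walks to ensure $\beta_{(i)}^{-S_j}\geq 1$ occurs infinitely often; verifying that regeneration points of $X^{(0)}$ produce genuine single-edge cutsets in $\mc{Z}^{(1)}$ is the key geometric observation that makes the resistance comparison transparent.
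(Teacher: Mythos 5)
Your proposal is correct and follows essentially the same route as the paper: lower-bound $R^{(i)}(0,\infty)$ by the sum of resistances of the single cut-edges $(X^{(0)}_{\mathcal{T}_j-1},X^{(0)}_{\mathcal{T}_j})$ at regeneration points (the paper phrases this via Rayleigh's monotonicity, you via Nash--Williams, but it is the same calculation), then use the law of large numbers for regeneration points when $\delta^{(0)}\cdot\ell^{(i)}<0$, and the recurrence of the centred one-dimensional random walk $j\mapsto v_j\cdot\ell^{(i)}$ (Chung--Fuchs) in the borderline case $\delta^{(0)}\cdot\ell^{(i)}=0$. One small inaccuracy: under the paper's definition a regeneration time forbids dropping \emph{strictly} below level $v_j\cdot\ell_0$ afterwards, so $v_j$ may in fact be revisited and have more than two incident edges in $\mathcal{Z}^{(1)}$; this does not affect your argument, since the edge $(X^{(0)}_{\mathcal{T}_j-1},X^{(0)}_{\mathcal{T}_j})$ is still the unique edge of $\mathcal{Z}^{(1)}$ joining $\{X^{(0)}_n:n<\mathcal{T}_j\}$ to $\{X^{(0)}_n:n\geq\mathcal{T}_j\}$, and that is all Nash--Williams needs.
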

\begin{proof} First suppose that $\delta^{(0)}\cdot \ell^{(i)}<0$.  Let $(\mathcal{T}_j)_{j\geq 1}$ be the regeneration times for $X^{(0)}$ in direction $\delta^{(0)}$. It then holds that
\begin{equation}\label{rinfty}
R^{(i)}(0,\infty)\geq \sum_{j=2}^{\infty}r^{(i)}\left(X^{(0)}_{\mathcal{T}_j-1},X^{(0)}_{\mathcal{T}_j}\right)
\geq C \sum_{j=2}^{\infty}\beta_{(i)}^{-X^{(0)}_{\mathcal{T}_j}\cdot\ell^{(i)}}\geq
C\sum_{j=2}^{\infty}\beta_{(i)}^{-cj \delta^{(0)}\cdot\ell^{(i)}}=\infty,
\end{equation}
where to deduce the first inequality we again appeal to Rayleigh's monotonicity principle, and for the third we use the fact that there exists a deterministic constant $c\in (0,\infty)$, such that $\mathbb{P}$-a.s.,
\begin{equation}\label{llnx0regen}
j^{-1}X^{(0)}_{\mathcal{T}_j}\rightarrow c\delta^{(0)}.
\end{equation}
This is a consequence of the law of large numbers for $X^{(0)}$ and the law of large numbers for regeneration times that is stated at Lemma \ref{regenlem}(c). In conjunction with \cite[Theorem 2.11]{Barlow}, \eqref{rinfty} yields that $\mathcal{Z}^{(i)}$ is a $\bs{p}^{(i)}$-recurrent graph.

In the case when $\delta^{(0)}\cdot\ell^{(i)}=0$, a similar, but slightly more delicate argument can be applied to deduce the same conclusion. In particular, the bound given in the penultimate sum of \eqref{rinfty} is still applicable, and we need to show this is infinite. To do this, first observe Lemma \ref{regenlem} implies
\[\left(\left(X^{(0)}_{\mathcal{T}_{j+1}}-X^{(0)}_{\mathcal{T}_j}\right)\cdot\ell^{(i)}\right)_{j\geq 1}\]
form an i.i.d.\ sequence, and moreover, \eqref{llnx0regen} (together with \cite[Theorems VII.9.1 and VII.9.4]{Feller}) yields that these random variables have expectation equal to zero.  Thus the sum of these increments is a (non-trivial) one-dimensional simple random walk with zero mean, and standard arguments show that, $\mathbb{P}$-a.s.,
\[-\infty=\liminf_{j\rightarrow\infty} X^{(0)}_{\mathcal{T}_j}\cdot\ell^{(i)}<\limsup_{j\rightarrow\infty} X^{(0)}_{\mathcal{T}_j}\cdot\ell^{(i)}=\infty.\]
Hence, $\mathbb{P}$-a.s., there are infinitely many $j$ for which
\[\beta_{(i)}^{-X^{(0)}_{\mathcal{T}_j}\cdot\ell^{(i)}}\geq 1.\]
Thus $R^{(i)}(0,\infty)=\infty$ in this case as well.
\end{proof}

\begin{REM}\label{counterrem}
The condition $\delta^{(0)}\cdot \ell^{(i)}>0$ for transience (in each direction $\ell$ for which $\ell\cdot\delta^{(0)}>0$) permits some counterintuitive behaviour whereby a random walk can be transient in the ``wrong direction'' (to be more precise,  the walk $X^{(i)}$ can be transient in direction $\delta^{(0)}$ even though $\delta^{(0)}\cdot \delta^{(i)}<0$, and vice versa, See Example \ref{exa:counterint}).  Similar phenomena can be observed for example in random walk in i.i.d.~random environment, where in one dimension the transience of the walk is determined by the sign of $\E[\log(\omega_0^{-1}-1)]$ rather than $\E[\omega_0]$, where $\omega_0\in (0,1)$ is the random probability of stepping to the right at the origin \cite{Solomon}.
\end{REM}

\begin{EXA}
\label{exa:counterint}
Let
\[p^{(0)}(e_1)=p^{(0)}(e_2)=\frac{1}{4}+\vep, \quad p^{(0)}(-e_1)=p^{(0)}(-e_2)=\frac{1}{4}-\vep,\]
for some $\varepsilon\in (0,\frac14)$, so that $\delta^{(0)}=(2\varepsilon,2\varepsilon)$.   Furthermore, let
\begin{equation}
\label{funny1}
p^{(1)}(e_1)=\frac{15}{25},\qquad p^{(1)}(-e_1)=\frac{5}{25},\qquad p^{(1)}(e_2)=\frac{1}{25},\qquad p^{(1)}(-e_2)=\frac{4}{25}.
\end{equation}
In this case  $\delta^{(1)}=(\frac{10}{25},-\frac{3}{25})$ and $\hat{\ell}^{(1)}=(\log(3),-\log(4))$. Thus
\begin{equation}
\delta^{(1)}\cdot \delta^{(0)} > 0 >\ell^{(1)}\cdot\delta^{(0)},\label{signs1}
\end{equation}
which implies that even though a random walk on $\mathbb{Z}^d$ with transition probabilities $p^{(1)}$ is transient in the direction $\delta^{(0)}$, the process $X^{(1)}$ on $\mc{Z}^{(1)}$ is not.

If instead we rotate the transition probabilities \eqref{funny1}  through $\pi$ radians, i.e.\ we set
\[p^{(1)}(e_1)=\frac{5}{25},\qquad p^{(1)}(-e_1)=\frac{15}{25},\qquad p^{(1)}(e_2)=\frac{4}{25},\qquad p^{(1)}(-e_2)=\frac{1}{25},\]
then the signs in \eqref{signs1} are reversed, so $X^{(1)}$ is transient in the direction $\delta^{(0)}$, even though a walk on $\mathbb{Z}^d$ with transition probabilities $p^{(1)}$ is transient in the direction $-\delta^{(0)}$.
\end{EXA}

\section{Velocity}
\label{sec:velocity}

The main goal of this section is to prove Theorem \ref{thm:main1}(b).  We will frequently make use of regenerations of walks in direction $e_1$. In particular, we will write $\mc{T}=(\mc{T}_k)_{k \in \N}$ for the set of regeneration times for $X^{(0)}$, where, in the notation of the previous section, $\mc{T}_k=\mc{T}_k^{e_1}$. We note that, by definition, any regeneration time for a walk in direction $e_1$ is also a first hitting time of some level in direction $e_1$ by that walk. The corresponding regeneration levels are defined as $\{X^{(0)}_{\mc{T}_k}\cdot e_1: k \in \N\}$, and the corresponding regeneration points are $\{X^{(0)}_{\mc{T}_k}: k \in \N\}$.

\begin{LEM}
\label{lem:funny} Assume Condition \ref{cond:1}, and let $i\in\mathbb{N}\cup\{\infty\}$.
\begin{itemize}
\item[(1)] If $X^{(0)}$ is not transient in direction $\pm e_j$, then $X^{(i)}\cdot e_j$ almost surely hits 0 infinitely often, and $\lim_{n \to \infty}n^{-1}X_n^{(i)}\cdot e_j=0$.
\item[(2)] If for some fixed $\vep_1>0$, $\lim_{n \to \infty} n^{-1}X_n^{(i)}\cdot e_1=\vep_1$, $\P$-a.s., then $\lim_{n \to \infty} n^{-1}X_n^{(i)}=
(\vep_1/\delta^{(0)}_1)\delta^{(0)}$, $\P$-a.s.
\item[(3)] If $\lim_{n \to \infty} n^{-1}X_n^{(i)}\cdot e_1=0$, $\P$-a.s., then $\lim_{n \to \infty}n^{-1}X_n^{(i)}=0$.
\end{itemize}
\end{LEM}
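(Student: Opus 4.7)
The plan is to reduce everything to the regeneration structure of $X^{(0)}$ in direction $e_1$: write $\mc{T}_k$ and $R_k := X^{(0)}_{\mc{T}_k}$ for the regeneration times and points from Lemma \ref{regenlem}. The key structural observation is that, by the defining property of $\mc{T}_k$, $R_k$ is the \emph{only} vertex of $\mc{V}^{(1)}$ at $e_1$-level $R_k\cdot e_1$, and the steps of $X^{(0)}$ at both times $\mc{T}_k$ and $\mc{T}_k + 1$ are forced to equal $+e_1$. Hence $R_k$ has degree exactly $2$ in $\mc{Z}^{(1)}$, with neighbours $R_k \pm e_1$, and is a cut vertex. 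A straightforward induction on $i$, using the $e_1$-transience of $X^{(i-1)}$ from Theorem \ref{thm:main1}(a), then shows $R_k \in \mc{V}^{(i)}$ and that $R_k$ remains a cut vertex of $\mc{Z}^{(i)}$ for every $i \in \mathbb{N} \cup \{\infty\}$; consequently $X^{(i)}$ visits $R_1, R_2, \dots$ in order.

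Next I would establish coarse scaling using Lemma \ref{regenlem}(ii)--(iii) combined with the LLN $n^{-1} X^{(0)}_n \to \delta^{(0)}$: the i.i.d.\ increments $(R_{k+1} - R_k)_{k \ge 1}$ are integrable with mean $C\delta^{(0)}$, so $k^{-1}R_k \to C\delta^{(0)}$ a.s. Letting $D_k$ denote the diameter of the $k$th regeneration block, $D_k \le \mc{T}_{k+1} - \mc{T}_k$ is integrable and hence $k^{-1}D_k \to 0$ a.s.\ by Borel--Cantelli. Define $k(n) := \max\{k : R_k \cdot e_1 \le X^{(i)}_n \cdot e_1\}$; since $R_k \cdot e_1 \ge k$ (strictly increasing positive integers), we have $k(n) \le n$, whence $D_{k(n)}/n \to 0$. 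Because $X^{(i)}_n$ lies in the $k(n)$th block, $|X^{(i)}_n \cdot e - R_{k(n)} \cdot e| \le D_{k(n)}$ for every coordinate $e$.

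For (1), when $\delta^{(0)}_j = 0$ the walk $\{R_k \cdot e_j\}_{k \ge 1}$ is i.i.d.\ mean-zero; non-degeneracy is checked by exhibiting positive-probability regeneration excursions of net $e_j$-displacement $0$ and $+1$ respectively (using $p^{(0)}(\pm e_j) > 0$ from Condition \ref{cond:1}(b)), so $\limsup_k R_k \cdot e_j = +\infty = -\liminf_k R_k \cdot e_j$ a.s. Since $X^{(i)}$ passes through each $R_k$ in order and changes its $e_j$-coordinate by at most $1$ per step, $X^{(i)}_n \cdot e_j$ must equal $0$ infinitely often. The Cesaro statement follows from $|X^{(i)}_n \cdot e_j|/n \le (D_{k(n)} + |R_{k(n)} \cdot e_j|)/n \to 0$, using $k^{-1}|R_k \cdot e_j| \to 0$ (SLLN) together with $k(n) \le n$.

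Parts (2) and (3) both drop out of the relation $X^{(i)}_n = R_{k(n)} + O(D_{k(n)})$: dividing by $n$ and combining with $k^{-1} R_k \to C\delta^{(0)}$ coordinate-wise, the hypothesis of (2) forces $k(n)/n \to \vep_1/(C\delta^{(0)}_1)$ via the $e_1$-component (using $\delta^{(0)}_1 > 0$), giving $X^{(i)}_n/n \to (\vep_1/\delta^{(0)}_1)\delta^{(0)}$, while the hypothesis of (3) similarly gives $k(n)/n \to 0$ and hence $X^{(i)}_n/n \to 0$. The main obstacles I anticipate are the rigorous verification of the cut-vertex claim (which depends on the strict regeneration inequality $X^{(0)}_n\cdot e_1 > R_k\cdot e_1$ for $n > \mc{T}_k$ under the standard convention) and of the non-degeneracy of the regeneration-increment walk; both should fall to standard random-walk arguments, but deserve care.
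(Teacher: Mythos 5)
Your proof is essentially correct, but it takes a noticeably different route from the paper, and one of your auxiliary claims is false (while the claim you actually need is still true).

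First, the false claim: under the standard Sznitman--Zerner convention (which the paper is using, via the cited reference \cite{SZ}), the regeneration condition only guarantees $X^{(0)}_n\cdot e_1\ge R_k\cdot e_1$ for $n>\mc{T}_k$, not strict inequality. So the step at time $\mc{T}_k+1$ need not be $+e_1$, $R_k$ is generally \emph{not} the unique vertex of $\mc{V}^{(1)}$ at its level, and $R_k$ need not have degree $2$. However, what you actually use is simply that $(R_k-e_1,R_k)$ is a bridge of $\mc{Z}^{(1)}$: every vertex visited before $\mc{T}_k$ has $e_1$-level strictly below $R_k\cdot e_1$ (since $\mc{T}_k$ hits a new maximum), every vertex visited at or after $\mc{T}_k$ has level $\ge R_k\cdot e_1$, and the only step of $X^{(0)}$ between levels $R_k\cdot e_1-1$ and $R_k\cdot e_1$ is the one at time $\mc{T}_k-1\to\mc{T}_k$. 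This gives the cut-vertex/bridge property directly under the weak ``$\ge$'' convention, without the strict inequality you flag as an obstacle, so the obstacle you were worried about does not actually arise. Everything downstream (that every $R_k$ lies in every $\mc{Z}^{(i)}$ and is visited by every $X^{(i)}$, and that $X^{(i)}_n$ lies in the $k(n)$th regeneration block) then goes through as you describe.

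On the comparison with the paper: for the ``hits $0$ infinitely often'' part of (1), your argument is essentially the paper's (both rest on the fact that all walks must pass through all regeneration points of $X^{(0)}$; you add a useful explicit non-degeneracy check for the increments $R_{k+1}\cdot e_j - R_k\cdot e_j$). For the Cesaro part of (1) and for (2) and (3), the paper takes a slicker route that avoids regeneration blocks entirely: it writes $X^{(i)}_n=X^{(0)}_{M_n}$ where $M_n=\inf\{m:X^{(0)}_m=X^{(i)}_n\}$, exploits the identity $n^{-1}X^{(i)}_n=(M_n^{-1}X^{(0)}_{M_n})(M_n/n)$ together with the law of large numbers for $X^{(0)}$ to pin down $M_n/n$, and (for the Cesaro part of (1)) bounds $|X^{(i)}_m\cdot e_j|$ by $\sup_{k\le m/\vep}|X^{(0)}_k\cdot e_j|$ using the linear growth of $X^{(0)}\cdot e_1$ and a crude $n^{2/3}$ almost-sure bound on $|X^{(0)}_n\cdot e_j|$. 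Your regeneration-block machinery ($k(n)$, the diameters $D_k$, the SLLN for $(R_{k+1}-R_k)$) is a valid substitute and is somewhat more unified across the three parts, at the cost of some extra bookkeeping (integrability of $\mc{T}_{k+1}-\mc{T}_k$ from Lemma \ref{regenlem}(iii), the Borel--Cantelli step for $D_k/k\to 0$, and the case analysis on whether $k(n)$ is bounded). Both approaches are sound; the paper's is shorter, yours is more structural.
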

\begin{proof} For (1), note that since the projection of $X^{(0)}$ in direction $e_j$ is a (lazy) simple symmetric random walk, it is recurrent.  This implies that $X^{(0)}$ has infinitely many $e_1$-direction regenerations when $X^{(0)}\cdot e_j>0$ and infinitely many when $X^{(0)}\cdot e_j<0$.  All of these regenerations are at points that all subsequent walks must pass through, which proves the first claim of (1).  For the second part of (1), note that a.s. there exists some finite random $Y$ such that $\sup_n n^{-2/3}X^{(0)}_n\cdot e_j\le Y$.  On the other hand there exists some $\vep>0$ and a random $N_0$ such that $X_n^{(0)}\cdot e_1>\vep n$ for all $n\ge N_0$ almost surely.  This means that, almost surely, for all $m$ sufficiently large, after walk $i$ has made $m>N_0$ steps, its first coordinate can be at most $m$, and therefore its second coordinate $X_m^{(i)}\cdot e_j$ can be at most $\sup_{k\le m/\vep}X_k^{(0)}\cdot e_j\le Y(m/\vep)^{2/3}$.  Therefore $X_m^{(i)}/m \ra 0$ almost surely.

To prove (2), note that by assumption $n^{-1}X_n^{(i)}\cdot e_1=\vep_1>0$ almost surely.  Let $M_n=\inf\{m: X_m^{(0)}=X^{(i)}_n\}$.  Then $M_n \ra \infty$ since $X^{(i)}$ is transient, and
\begin{align*}
\frac{X_n^{(i)}}{n}\cdot e_1=\frac{X_{M_n}^{(0)}}{M_n}\cdot e_1 \frac{M_n}{n}.
\end{align*}
The left hand side converges a.s.~to $\vep_1$ and $\frac{X_{M_n}^{(0)}}{M_n}\cdot e_1\to \delta^{(0)}_1$ a.s., so $\frac{M_n}{n} \to \vep_1/\delta^{(0)}_1$ a.s.  Thus,
\[\frac{X_n^{(i)}}{n}=\frac{X_{M_n}^{(0)}}{M_n}\frac{M_n}{n}\to \frac{\vep_1}{\delta^{(0)}_1}\delta^{(0)}.\]

It remains to prove (3).  Since $\delta^{(0)}\cdot e_1>0$ there exists $\delta>0$ and a random $N_1$ such that $X_n^{(0)}\cdot e_1>\delta n$ for every $n>N_1$.  Let $\vep\in (0,1)$.  Then there exists $N_2$ such that $|X_n^{(i)}\cdot e_1|< \vep\delta n$ for every $n\ge N_2$.   Let $N=N_2\vee (N_1/\vep)$.  Then for every $n>N\ge N_2$, $|X^{(0)}_{M_n}\cdot e_1|=|X_n^{(i)}\cdot e_1|\le \vep \delta n$.  This implies that $M_n\le \vep n$ so  $|X_n^{(i)}\cdot e_j|=|X^{(0)}_{M_n}\cdot e_j|\le M_n\le \vep n$. This establishes that $\limsup_{n \to \infty}n^{-1}|X_n^{(i)}\cdot e_j|\le \vep$, $\mathbb{P}$-a.s. for every $\vep\in (0,1)$, and therefore completes the proof.
\end{proof}

Next, let $\mc{L}^{(i)}=(L_1^{(i)},L_2^{(i)},\dots)$ denote the (ordered) set of strictly positive levels that are regeneration levels for every one of the walks $X^{(0)},\dots, X^{(i)}$.  The elements of $\mc{L}^{(i)}$ are called $i$-uber-regeneration levels. The following regeneration result is an easy consequence of the definition of the model. To state it, we use the notation $T^{(i)}_n=\inf\{k:X^{(i)}_k\cdot e_1=n\}$ to denote the first hitting time of level $n$ in the $e_1$ direction by walk $i$. Let $\hat{\mc{L}}^{(i)}=\{x\in \Z^d:x=X^{(i)}_{T_L^{(i)}} \text{ for some }L \in \mc{L}^{(i)}\}$ denote the corresponding uber-regeneration points.

\begin{LEM}
\label{lem:iid}
Fix $i \in \Z_+$.  If $L\in \mc{L}^{(i)}$, then $((X^{(r)}_{n+T_L^{(r)}}-X^{(r)}_{T_L^{(r)}})_{n\ge 0})_{r\le i}$:
\begin{itemize}
\item  is independent of
$((X^{(r)}_n)_{n\le T^{(r)}_{L}})_{r\le i}$, and
\item has the same law as $((X^{(r)}_{n+T_1^{(r)}}-X^{(r)}_{T_1^{(r)}})_{n\ge 0})_{r\le i}$ conditional on $\{1\in \mc{L}^{(i)}\}$.
\end{itemize}
\end{LEM}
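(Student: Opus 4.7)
The plan is to proceed by induction on $i\in\Z_+$. The base case $i=0$ is the classical single-walk strong-Markov/renewal statement for $X^{(0)}$: on $\{L\in\mc{L}^{(0)}\}$, the strong Markov property at $T_L^{(0)}$ gives that $(X^{(0)}_{T_L^{(0)}+n}-X^{(0)}_{T_L^{(0)}})_{n\ge 0}$ is independent of $(X^{(0)}_n)_{n\le T_L^{(0)}}$, and its conditional law is that of $X^{(0)}$ conditioned never to enter $\{x\cdot e_1<0\}$. By translation invariance of $\bs{p}^{(0)}$ in the $e_1$-direction, this equals the law of $(X^{(0)}_{T_1^{(0)}+n}-X^{(0)}_{T_1^{(0)}})_{n\ge 0}$ conditional on $\{1\in\mc{L}^{(0)}\}$.

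For the inductive step, assume the lemma for $i-1$ and fix $L\in\mc{L}^{(i)}$. The geometric key is that, since $L$ is an $e_1$-regeneration level for $X^{(i-1)}$, the trace $\mc{Z}^{(i)}$ decomposes as $\mc{Z}^{(i)}_-\cup\mc{Z}^{(i)}_+$, where $\mc{Z}^{(i)}_-$ is generated by $(X^{(i-1)}_n)_{n\le T_L^{(i-1)}}$ (lying in $\{x\cdot e_1\le L\}$), $\mc{Z}^{(i)}_+$ is generated by $(X^{(i-1)}_n)_{n\ge T_L^{(i-1)}}$ (lying in $\{x\cdot e_1\ge L\}$), and they share only the single vertex $v_L:=X^{(i-1)}_{T_L^{(i-1)}}$ and no edge. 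Consequently, every nearest-neighbour path in $\mc{Z}^{(i)}$ that crosses level $L$ must pass through $v_L$, so $X^{(i)}_{T_L^{(i)}}=v_L$, and on $\{L\in\mc{L}^{(i)}\}$ the post-$T_L^{(i)}$ trajectory of $X^{(i)}$ lies in $\mc{Z}^{(i)}_+$. A short Doob $h$-transform computation, using $h(u)=P^{(i)}_u(X^{(i)} \text{ never enters }\{x\cdot e_1<L\})$, shows that the corresponding conditional transition kernel of $X^{(i)}$ at $v_L$ depends only on the conductances within $\mc{Z}^{(i)}_+$ (the lower neighbours drop out because $h\equiv 0$ there).

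Combining (i) the strong Markov property for $X^{(i)}$ at $T_L^{(i)}$, (ii) the conditional independence of $X^{(i)}$ from $X^{(0)},\dots,X^{(i-1)}$ given $\mc{Z}^{(i)}$, which reduces the dependence of the future of $X^{(i)}$ on the earlier walks to its dependence on $\mc{Z}^{(i)}_+$, and (iii) the inductive hypothesis, which gives that the post-$T_L^{(i-1)}$ increments of $X^{(i-1)}$ (and hence $\mc{Z}^{(i)}_+$) together with the post-$T_L$ segments of $X^{(0)},\dots,X^{(i-2)}$ are independent of $\sigma((X^{(r)}_n)_{n\le T_L^{(r)}}:r\le i-1)$, yields the claimed independence from $((X^{(r)}_n)_{n\le T_L^{(r)}})_{r\le i}$. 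The distributional identification with the $L=1$ case conditional on $\{1\in\mc{L}^{(i)}\}$ then follows by translating the configuration on $\{x\cdot e_1\ge L\}$ by $-Le_1$ and invoking the $e_1$-translation invariance of each $\bs{p}^{(r)}$, which carries the law of the shifted upper trace and the walks on it to the law conditional on $\{1\in\mc{L}^{(i)}\}$.

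The main obstacle is that $\{L\in\mc{L}^{(i)}\}$ is a joint regeneration condition across all $i+1$ walks, so one must verify that conditioning on the regeneration of $X^{(i)}$ composes cleanly with the conditional independence structure inherited from the earlier walks. The $h$-transform step that localises the future of $X^{(i)}$ to $\mc{Z}^{(i)}_+$, together with the identity $\{L\in\mc{L}^{(i)}\}=\{L\in\mc{L}^{(i-1)}\}\cap\{L\text{ is a regeneration level for }X^{(i)}\}$ and the observation that the second event is measurable with respect to $\mc{Z}^{(i)}_+$ and the post-$T_L^{(i)}$ segment of $X^{(i)}$, are the technical points requiring care in the bookkeeping.
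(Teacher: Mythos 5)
The paper states Lemma \ref{lem:iid} without proof, describing it as an easy consequence of the definition of the model, so there is no argument in the source with which to compare your approach. Your plan assembles exactly the ingredients the authors presumably had in mind: the cut-vertex observation that $v_L:=X^{(i-1)}_{T_L^{(i-1)}}$ separates $\mc{Z}^{(i)}$ into an edge-disjoint union of $\mc{Z}^{(i)}_-$ and $\mc{Z}^{(i)}_+$ sharing only $v_L$, forcing $X^{(i)}_{T_L^{(i)}}=v_L$; the strong Markov property at $T_L^{(i)}$ combined with the conditional independence of $X^{(i)}$ from the earlier walks given $\mc{Z}^{(i)}$; translation-invariance of the conductance ratios; and the recursion through $\{L\in\mc{L}^{(i)}\}=\{L\in\mc{L}^{(i-1)}\}\cap\{L\text{ regenerates for }X^{(i)}\}$, with the latter event measurable with respect to the re-centred future of $X^{(i)}$. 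This is the right argument and the induction on $i$ closes it.

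One point deserves tightening. Your claim that the $h$-transformed kernel ``depends only on the conductances within $\mc{Z}^{(i)}_+$'' is not literally accurate: the harmonic function $h(u)=P^{(i)}_u\bigl(X^{(i)}\text{ never enters }\{x\cdot e_1<L\}\bigr)$ genuinely depends on the conductance of the single escaping edge $(v_L,v_L-e_1)$, since that edge is the only route out of $\mc{Z}^{(i)}_+$. What makes the conclusion survive is that on $\{L\in\mc{L}^{(i-1)}\}$ this edge is deterministic given $v_L$, so after re-centring at $v_L$ it contributes a fixed conductance and the conditional kernel is a functional of $\mc{Z}^{(i)}_+-v_L$ alone, which is measurable with respect to the re-centred future of $X^{(i-1)}$. (Indeed, one can avoid the $h$-transform altogether: the joint probability, given $\mc{Z}^{(i)}$, of any fixed path for the post-$T_L^{(i)}$ walk together with the event that it never drops below level $L$ is a finite product of one-step transition probabilities whose only dependence on $\mc{Z}^{(i)}_-$ is via the fixed edge $(v_L,v_L-e_1)$, and is hence a functional of $\mc{Z}^{(i)}_+-v_L$.) Relatedly, you should carry the re-centred trace $\mc{Z}^{(i)}_+-v_L$ through the induction rather than $\mc{Z}^{(i)}_+$ itself, since the latter records the past-measurable anchor $v_L$. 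With these clarifications the argument is complete.
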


Therefore, to find a common regeneration structure for the first $i$ walks, it suffices to show that $\P(\mc{L}^{(i)} \text{ is infinite})=1$. To this end, we start by checking that uber-regeneration levels occur at any natural number with strictly positive probability.

\begin{LEM}
\label{lem:uberprob}
 For each $i\in\mathbb{Z}_+$, there exists an $\varepsilon_i>0$ such that $\mathbb{P}(k\in\mc{L}^{(i)})=\varepsilon_i$ for every $k\geq 1$.
\end{LEM}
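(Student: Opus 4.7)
The plan is to prove Lemma~\ref{lem:uberprob} by induction on $i$, establishing simultaneously the $k$-independence and strict positivity of $\vep_i:=\P(k\in \mc{L}^{(i)})$. For the base case $i=0$, the event $\{k\in \mc{L}^{(0)}\}$ asks that level $k$ be a regeneration level of $X^{(0)}$. The strong Markov property at $T_k^{(0)}$ (which is $\P$-a.s.\ finite by transience in direction $e_1$) and the translation invariance of $\bs{p}^{(0)}$ give $\vep_0=\P(X^{(0)}_n\cdot e_1\ge 0\text{ for all }n\ge 0)$, which is $k$-independent and strictly positive by Condition~\ref{cond:1}(a).

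For the inductive step, assuming $\vep_{i-1}>0$ is $k$-independent, I would first decompose $\vep_i=\vep_{i-1}\cdot\P(k\text{ reg for }X^{(i)}\mid k\in \mc{L}^{(i-1)})$ and focus on the conditional factor. Conditional on $\{k\in \mc{L}^{(i-1)}\}$, Lemma~\ref{lem:iid} applied to index $i-1$ ensures that the shifted joint post-$T_k$ segments $\bigl((X^{(r)}_{n+T_k^{(r)}}-X^{(r)}_{T_k^{(r)}})_{n\ge 0}\bigr)_{r\le i-1}$ have a distribution independent of $k$. Setting $x_{i-1}:=X^{(i-1)}_{T_k^{(i-1)}}$, the shifted after-region $\mc{Z}^{(i)}_{\mathrm{after}}-x_{i-1}$ therefore has a fixed, $k$-independent law, and $x_{i-1}$ is a cut point of $\mc{Z}^{(i)}$ whose unique before-neighbour is $x_{i-1}-e_1$, since $X^{(i-1)}$ must enter level $k$ for the first time via a $+e_1$-step.

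By transience, $X^{(i)}$ almost surely reaches $x_{i-1}$, and $\{k\text{ reg for }X^{(i)}\}$ reduces to the event that every subsequent step taken by $X^{(i)}$ out of $x_{i-1}$ enters the after-region rather than landing at $x_{i-1}-e_1$. Writing $p_a$ for the chance that a single step from $x_{i-1}$ enters the after-region, and $r_a$ for the probability that $X^{(i)}$, started from an after-neighbour of $x_{i-1}$, returns to $x_{i-1}$, the strong Markov property at successive visits of $X^{(i)}$ to $x_{i-1}$ together with a geometric-series identity yields
\[
\P\bigl(k\text{ reg for }X^{(i)}\bigm|\mc{Z}^{(i)},\,k\in \mc{L}^{(i-1)}\bigr)=\frac{p_a(1-r_a)}{1-p_a r_a}.
\]
Because ratios of the conductances in \eqref{conductances} are translation-invariant, both $p_a$ and $r_a$ are measurable functions solely of the shifted after-structure, and hence $\E[p_a(1-r_a)/(1-p_a r_a)\mid k\in \mc{L}^{(i-1)}]$ is $k$-independent. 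Positivity follows from $p_a>0$ $\P$-a.s.\ (the first step of $X^{(i-1)}$ out of $x_{i-1}$ produces at least one after-edge there) and $r_a<1$ with positive probability (the after-region has finite effective resistance to infinity by the argument of Section~\ref{transsec}).

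The main obstacle is verifying rigorously that $p_a$ and $r_a$ depend only on the shifted after-structure: one must confirm that $(x_{i-1}-e_1,x_{i-1})$ is the sole edge at the cut vertex coming from the before-region, that the position-dependent factors $\beta_{(i)}^{(x\vee y)\cdot \ell^{(i)}}$ in \eqref{conductances} cancel in every relevant conductance ratio and hitting probability, and that the walk restricted to $\mc{Z}^{(i)}_{\mathrm{after}}\cup\{x_{i-1}\}$ accurately describes $X^{(i)}$'s excursions from $x_{i-1}$ between successive visits.
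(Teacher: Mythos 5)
Your proof is correct and follows the same overall strategy as the paper: decompose $\P(k\in\mc{L}^{(i)})$ via the chain $\vep_i=\vep_{i-1}\,\P(k\text{ reg for }X^{(i)}\mid k\in\mc{L}^{(i-1)})$, establish $k$-independence of the conditional factor from Lemma~\ref{lem:iid} together with translation invariance of the conductance ratios, and establish positivity using the fact (from Section~\ref{transsec}) that the after-part of $\mc{Z}^{(i)}$ has a.s.\ finite resistance to infinity. The one place where you diverge from the paper is in how you express the conditional escape probability given $\mc{Z}^{(i)}$: you compute it exactly via the geometric decomposition at the cut vertex $x_{i-1}$, yielding $p_a(1-r_a)/(1-p_a r_a)$ (this is the exact probability of never visiting $x_{i-1}-e_1$ after $T^{(i)}_k$, which is the precise regeneration event), whereas the paper lower-bounds it by the probability of \emph{never returning to $x_{i-1}$} and then invokes the electric-network identity $P^{(j)}_x(\text{no return})=1/(c^{(j)}(x)R^{(j)}(x,\infty))$ from \cite[Theorem 2.11]{Barlow}, further bounded by $1/(\bar c^{(j)}(x)\sum r^{(j)})$. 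Both routes give what is needed; yours is more elementary and gives the exact quantity, the paper's is a one-line appeal to a resistance formula and a simple lower bound that avoids having to set up the geometric-series bookkeeping. The ``obstacles'' you flag at the end (uniqueness of the before-edge at the cut vertex, cancellation of the $\beta_{(i)}^{(\cdot)\cdot\ell^{(i)}}$ factors, correctness of the restricted-walk description) are indeed the points one must verify, and each holds for exactly the reasons you indicate: on $\{k\in\mc{L}^{(i-1)}\}$, $(x_{i-1}-e_1,x_{i-1})$ is the unique edge of $\mc{Z}^{(1)}\supseteq\mc{Z}^{(i)}$ crossing from level $k-1$ to $k$ because $k$ is an $e_1$-regeneration level for $X^{(0)}$; the formula \eqref{conductances} makes every transition probability a ratio in which the overall factor $\beta_{(i)}^{x_{i-1}\cdot\ell^{(i)}}$ cancels; and the cut-vertex structure ensures the walk started from an after-neighbour cannot reach the before-region except through $x_{i-1}$, so $r_a$ is a measurable function of the shifted after-graph alone.
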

\begin{proof} Firstly, the directional transience of Theorem \ref{thm:main1}(a) and the strong Markov property imply the result in the case $i=0$. So, now suppose $i,k\geq 1$. We then have that
\[\mathbb{P}\left(k\in\mc{L}^{(i)}\right)=\mathbb{P}\left(k\in\mc{L}^{(0)}\right)\prod_{j=1}^i\mathbb{P}\left(k\in\mc{L}^{(j)}\:\vline\: k\in\mc{L}^{(j-1)}\right).\]
Conditioning on the graph $\mathcal{Z}^{(j)}$ (and noting that, on the event $k\in\mc{L}^{(j-1)}$, $X^{(0)}_{T_k^{(0)}}$ is a measurable function of $\mathcal{Z}^{(j)}$), we have that
\begin{eqnarray*}
\mathbb{P}\left(k\in\mc{L}^{(j)}\:\vline\: k\in\mc{L}^{(j-1)},\:\mathcal{Z}^{(j)}\right)&=&
P^{(j)}_{X^{(0)}_{T_k^{(0)}}}\left(X^{(j)}\mbox{ does not return to }X^{(0)}_{T_k^{(0)}}\right)\\
&=&\frac{1}{c^{(j)}\left(X^{(0)}_{T_k^{(0)}}\right)R^{(j)}\left(X^{(0)}_{T_k^{(0)}},\infty\right)}\\
&\geq &\frac{1}{\bar{c}^{(j)}\left(X^{(0)}_{T_k^{(0)}}\right)\sum_{m= T_k^{(0)}}^{\infty} r^{(j)}\left(X^{(0)}_m,X^{(0)}_{m+1}\right)},
\end{eqnarray*}
where $\bar{c}^{(i)}(x):=\sum_{y \in \Z^d:y\sim x}c^{(i)}(x,y)$. Note that we have applied \cite[Theorem 2.11]{Barlow} for the second inequality, and bounded the resistance to infinity as in the proof of Theorem \ref{thm:main1}(a). We thus obtain that
\begin{eqnarray*}
\lefteqn{\mathbb{P}\left(k\in\mc{L}^{(j)}\:\vline\: k\in\mc{L}^{(j-1)}\right)}\\
&\geq&
\mathbb{E}\left[\frac{1}{\bar{c}^{(j)}\left(X^{(0)}_{T_k^{(0)}}\right)\sum_{m= T_k^{(0)}}^{\infty} r^{(j)}\left(X^{(0)}_m,X^{(0)}_{m+1}\right)}\:\vline\:k\in\mc{L}^{(j-1)}\right]\\
&=&\mathbb{E}\left[\frac{1}{\bar{c}^{(j)}\left(X^{(0)}_{T_1^{(0)}}\right)\sum_{m= T_1^{(0)}}^{\infty} r^{(j)}\left(X^{(0)}_m,X^{(0)}_{m+1}\right)}\:\vline\:1\in\mc{L}^{(j-1)}\right],
\end{eqnarray*}
where the equality follows from Lemma \ref{lem:iid}. Now, as was observed in the proof of Theorem \ref{thm:main1}(a), the random variable $\sum_{m=0}^{\infty} r^{(j)}(X^{(0)}_m,X^{(0)}_{m+1})$ is $\mathbb{P}$-a.s.\ finite. Hence the above expectation is strictly positive. In particular, this confirms that
\begin{equation}\label{deltadef}
\delta_j:=\mathbb{P}\left(k\in\mc{L}^{(j)}\:\vline\: k\in\mc{L}^{(j-1)}\right)>0.
\end{equation}
Setting $\delta_0:=\varepsilon_0>0$, we thus have established the result with $\varepsilon_i=\prod_{j=0}^i\delta_j$.
\end{proof}

The above result allows us to establish the following lemma, which in view of Lemma \ref{lem:iid} confirms that there are indeed almost-surely an infinite number of $i$-uber-regeneration levels. In the proof, we write
\[\tau^{(r,i)}_{j}=\inf\left\{n:X^{(r)}_n=L^{(i)}_j\right\}\]
to represent the first hitting time of $L_j^{(i)} \in \mc{L}^{(i)}$ by walk $X^{(r)}$.

\begin{LEM}
\label{lem:L1}
For each $i\in \Z_+$,
\begin{itemize}
\item[(0)] $\P(L_1^{(i)}=1)>0$, and
\item[(1)] $L_1^{(i)}$ is almost surely finite, and
\item[(2)] given that $L_1^{(i)}=1$, $L_2^{(i)}$ is almost surely finite.
\end{itemize}
\end{LEM}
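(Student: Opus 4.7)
My plan is to prove part (0) directly from Lemma \ref{lem:uberprob} and to handle parts (1) and (2) together via a geometric decomposition of $\mc{L}^{(i)}$ afforded by Lemma \ref{lem:iid}, the divergence of $\mathbb{E}|\mc{L}^{(i)}|$, and a $0$-$1$ argument. Part (0) is immediate: since $\{L_1^{(i)}=1\}=\{1\in\mc{L}^{(i)}\}$, we have $\P(L_1^{(i)}=1)=\varepsilon_i>0$ by Lemma \ref{lem:uberprob}.

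For parts (1) and (2), let $p:=\P(L_1^{(i)}<\infty)$ and $q:=\P(L_2^{(i)}<\infty\mid L_1^{(i)}=1)$. Lemma \ref{lem:iid}, iterated at successive uber-regenerations, shows that conditional on $L_k^{(i)}<\infty$ the increment $L_{k+1}^{(i)}-L_k^{(i)}$ is independent of the past and has the same distribution as $L_1^{(i)}$ under $\P(\cdot\mid 1\in\mc{L}^{(i)})$. Consequently $\P(|\mc{L}^{(i)}|\geq k)=p\,q^{k-1}$ for each $k\geq 1$, and hence
\[\mathbb{E}|\mc{L}^{(i)}|=\sum_{k=1}^{\infty}p\,q^{k-1}.\]
On the other hand, Lemma \ref{lem:uberprob} gives $\mathbb{E}|\mc{L}^{(i)}|=\sum_{k=1}^{\infty}\P(k\in\mc{L}^{(i)})=\sum_{k=1}^{\infty}\varepsilon_i=\infty$. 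Since $p\leq 1$, the only way for the geometric sum to diverge is $q=1$, which is precisely the content of part (2).

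It remains to upgrade $p\geq\varepsilon_i>0$ to $p=1$. With $q=1$, the geometric formula gives $\P(|\mc{L}^{(i)}|\geq k)=p$ for every $k\geq 1$, so $\{L_1^{(i)}<\infty\}=\{|\mc{L}^{(i)}|=\infty\}$ up to a $\P$-null set. The latter event is a tail event of the joint walk process: for any time $t$, whether there are infinitely many uber-regenerations depends only on uber-regenerations at levels eventually exceeding $\max_{r\le i,\,n\le t}X^{(r)}_n\cdot e_1$, and hence only on the walks after time $t$. The joint process $(X^{(0)},X^{(1)},\ldots,X^{(i)})$ together with the graphs $(\mc{Z}^{(r)})_{r\le i}$ traced so far is Markovian with a deterministic initial state, so a Kolmogorov-type $0$-$1$ law applies to give $\P(|\mc{L}^{(i)}|=\infty)\in\{0,1\}$; combined with $p\ge\varepsilon_i>0$, this forces $p=1$, completing part (1). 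The main technical hurdle is making this $0$-$1$ argument rigorous, which requires an appropriate augmentation of the state space so that the joint process is Markov and a careful verification that $\{|\mc{L}^{(i)}|=\infty\}$ belongs to the tail of this augmented chain.
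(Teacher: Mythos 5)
Your handling of part (0) is correct and matches the paper. Your argument for part (2) is clean and genuinely different from the paper's route: combining the geometric decomposition $\P(|\mc{L}^{(i)}|\geq k)=p\,q^{k-1}$ (which follows from Lemma~\ref{lem:iid}, applied at the random level $L_{k-1}^{(i)}$) with $\mathbb{E}|\mc{L}^{(i)}|=\sum_{k}\P(k\in\mc{L}^{(i)})=\infty$ from Lemma~\ref{lem:uberprob} does indeed force $q=1$, and this is an economical way to obtain part (2) once part (1) is in hand.

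However, part (1) has a genuine gap, and it is the part you flagged as a ``technical hurdle.'' You reduce $p=1$ to the triviality of the tail event $\{|\mc{L}^{(i)}|=\infty\}$, and then appeal to ``a Kolmogorov-type $0$-$1$ law'' for the joint process $(X^{(0)},\dots,X^{(i)})$ together with the graphs ``traced so far.'' This is not available in the form you invoke it. The joint process is not a Markov chain in the required sense: the one-step dynamics of $X^{(r)}$ at time $t$ require knowledge of $\mathcal{Z}^{(r)}$ (hence of the \emph{entire infinite} trajectory of $X^{(r-1)}$), not merely the portion explored by time $t$. Even if one augmented the state to include the explored portions of the graphs, the resulting chain is a transient, time-inhomogeneous process on a growing state space, and tail events of transient Markov chains are not automatically trivial. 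Moreover, ``the event depends only on the walks after time $t$'' is weaker than ``the event is independent of $\mathcal{F}_t$'': the post-$t$ evolution of $X^{(1)},\dots,X^{(i)}$ is heavily dependent on pre-$t$ history through the graphs, so measurability in the tail does not yield a Kolmogorov-type $0$-$1$ law here. What is actually needed is precisely the structure the paper constructs: an inductive argument showing that, conditionally on $\mc{L}^{(i-1)}$ being infinite, at each candidate level the conditional probability of success given everything to the left is at least $\delta_i>0$ (the lemma establishing $q_H\geq\delta_i$), so that the number of attempts before $L_1^{(i)}$ is dominated by a geometric random variable. Your expectation argument cannot see the difference between $p=1$ and $p\in(0,1)$, so the $0$-$1$ step is essential and, as stated, unproved. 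Replacing your $0$-$1$ sketch with the paper's block construction (or an equivalent coupling giving uniformly positive conditional success probabilities) would repair the proof.
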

\begin{proof}[Sketch proof]
The claim (0) is immediate by Lemma \ref{lem:uberprob} with $k=1$.

The remaining two claims are trivial for $i=0$, and by Lemma  \ref{lem:iid} this implies that $\mc{L}^{(0)}$ is infinite a.s.
Proceeding by induction on $i$, armed with Lemma \ref{lem:iid}, we may prove the last two claims assuming that $\mc{L}^{(i-1)}$ is almost surely infinite.
Since $\mc{L}^{(i-1)}$ is an infinite set, at each time $\tau^{(i,i-1)}_{j}$, the walk $X^{(i)}$ has probability $\delta_i=\vep_i/\vep_{i-1}>0$ of never backtracking from its current location, and if this happens at some level $L\in \mc{L}^{(i-1)}$ then in fact  $L\in \mc{L}^{(i)}$.  If it fails then the walk $X^{(i)}$ reached some maximal level $M$ before backtracking, and we ask the question again when the walk reaches $M'=\inf\{n>M: n \in  \mc{L}^{(i-1)}\}$.  We can repeat this and eventually succeed to find an $i$-uber-regeneration level.  This shows that $\mc{L}^{(i)}$ is non-empty.  Once we find an $L_1^{(i)}\in \mc{L}^{(i)}$, when the walker $X^{(i)}$ reaches level $\inf\{n>L_1^{(i)}: n \in  \mc{L}^{(i-1)}\}$ it has probability at least $\delta_i>0$ of never backtracking from this location (not necessarily equal to $\delta_i$ because it does not backtrack past level $L_1^{(i)}$), and we can continue similarly to above to find $L_2^{(i)}\in \mc{L}^{(i)}$.
\end{proof}

Some care is required to make the above rigorous, and this is presented in the proof below. For an interval $\mc{I}\subset \Z$ write $\mc{Z}^{(i)}_{\mc{I}}:=\mc{Z}^{(i)}\cap (\mc{I}\times \Z^{d-1})$, and let $\mc{E}_{\mc{I}}^{(i)}$ denote the set of edges of $\mc{Z}^{(i)}_{\mc{I}}$ such that at least one of the end vertices $x$ has $x\cdot e_1$ in the interior of $\mc{I}$.  We start by giving a bound on a conditional version of the regeneration probability.  For this, first note that if $G$ is a connected graph with at least one edge then the graph can be identified with its set of edges.  Given a finite connected graph $H\subset \Z^d$ with at least two edges, a unique right-most vertex $u$, and a unique left-most vertex $o=(0,\dots,0)$, we let $E_H=\{\mc{E}^{(i)}_{[0,u\cdot e_1]}=H\}$ and
\[q_{H}=\P\left(u\in \hat{\mc{L}}^{(i)}\,\Big|\,E_H, u\in \hat{\mc{L}}^{(i-1)},1 \in \mc{L}^{(i)}\right).\]
\begin{LEM}
For every $H$, $q_H\ge \delta_i$, where $\delta_i$ was defined at \eqref{deltadef}.
\end{LEM}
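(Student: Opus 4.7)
My strategy is to apply the strong Markov property of $X^{(i)}$ at its first visit $T^{(i)}_{u\cdot e_1}$ to $u$, perform an excursion decomposition from $u$ (to handle the conditioning on $\{1\in\mc L^{(i)}\}$), and exploit the regeneration structure of $X^{(i-1)}$ at $u$ together with translation invariance to identify the result with $\delta_i$. On $\{u\in\hat{\mc L}^{(i-1)}\}$, the graph $\mc Z^{(i)}$ splits at $u$: since $X^{(i-2)}$ first reaches level $u\cdot e_1$ at $u$ and never subsequently backtracks below, the only edge of $\mc Z^{(i)}$ joining vertices with $e_1$-coordinate $<u\cdot e_1$ to those with $e_1$-coordinate $\geq u\cdot e_1$ is $(u-e_1,u)$. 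Let $\Lambda$ and $\Gamma$ denote the induced subgraphs on these two half-spaces, together with the bridge edge. By the regeneration of $X^{(i-1)}$ at $T^{(i-1)}_{u\cdot e_1}$ (Lemma \ref{lem:iid} applied at level $u\cdot e_1$ with $i$ replaced by $i-1$), $\Lambda$ and $\Gamma$ are conditionally independent given $\{u\in\hat{\mc L}^{(i-1)}\}$. Moreover, since translating the conductance network in direction $e_1$ scales all conductances by the same multiplicative factor while resistances scale inversely, the law of $p_R:=(c^{(i)}(u)R^{(i)}(u,\infty))^{-1}$ given $\{u\in\hat{\mc L}^{(i-1)}\}$ coincides with that of $(c^{(i)}(X^{(0)}_{T^{(0)}_1})R^{(i)}(X^{(0)}_{T^{(0)}_1},\infty))^{-1}$ given $\{1\in\mc L^{(i-1)}\}$, which has expectation $\delta_i$ by the formula in the proof of Lemma \ref{lem:uberprob}.

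I then classify excursions of $X^{(i)}$ from $u$ as (a) going to infinity without returning, with probability $p_R$ (a $\Gamma$-measurable quantity, since $\Lambda$ is finite and contributes nothing to the resistance to infinity); (b) returning to $u$ without visiting level $0$; or (c) visiting level $0$ before returning, with probability $p_0$ (transience of $X^{(i)}$ forces a.s.\ return in case (c)). Combining this with the strong Markov property of $X^{(i)}$ at $T^{(i)}_{u\cdot e_1}$ and a geometric series in the excursion outcomes, I obtain, conditionally on $\mc Z^{(i)}$,
\[P^{(i)}\bigl(\{u\in\hat{\mc L}^{(i)}\}\cap\{1\in\mc L^{(i)}\}\bigr)=p_{\mathrm{pre}}\,p_R,\qquad P^{(i)}\bigl(\{1\in\mc L^{(i)}\}\bigr)=p_{\mathrm{pre}}\cdot\tfrac{p_R}{p_R+p_0},\]
where $p_{\mathrm{pre}}$ is the $\Lambda$-measurable probability, given $\Lambda$, that the pre-$u$ trajectory of $X^{(i)}$ has level $1$ as a regeneration level. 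Taking the ratio of these two quantities, using $p_R/(p_R+p_0)\leq 1$ to upper-bound the denominator by $\E[p_{\mathrm{pre}}\mid E_H,u\in\hat{\mc L}^{(i-1)}]$, and using the $\Lambda$--$\Gamma$ independence to factor the numerator as $\E[p_{\mathrm{pre}}\mid E_H,u\in\hat{\mc L}^{(i-1)}]\cdot\E[p_R\mid u\in\hat{\mc L}^{(i-1)}]$, gives $q_H\geq\E[p_R\mid u\in\hat{\mc L}^{(i-1)}]=\delta_i$.

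The main obstacle will be making the $\Lambda$--$\Gamma$ independence and the measurability claims precise enough to justify the factoring: in particular, formulating the regeneration of $X^{(i-1)}$ at $T^{(i-1)}_{u\cdot e_1}$ at the level of the two random subgraphs (rather than single walks), and verifying that $p_{\mathrm{pre}}$ is $\Lambda$-measurable (noting that $\Lambda$ contains level-$0$ edges not specified by $H$) while $p_R$ is $\Gamma$-measurable (noting that only the deterministic conductance $c^{(i)}(u,u-e_1)$ from $\Lambda$ enters). The bookkeeping of possible re-entries of $X^{(i)}$ into the finite component $\Lambda$ through $u$ will also need care in the excursion argument.
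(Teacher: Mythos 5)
Your proposal follows essentially the same underlying route as the paper's proof: split the environment and the walk at the uber-regeneration point $u$ into a ``left'' part (which carries $E_H$ and the pre-$\tau$ portion of $\{1\in\mc L^{(i)}\}$) and a ``right'' part (which carries $R=\{u\in\hat{\mc L}^{(i)}\}$), use the conditional independence of these two parts given $R'=\{u\in\hat{\mc L}^{(i-1)}\}$ to factor the numerator, and drop the post-$\tau$ condition from the denominator. The paper does this directly at the level of events, writing $q_H=\P(R,E_H,F\mid R')/\P(F',E_H,F\mid R')$, applying conditional independence to the numerator, and bounding the denominator by $\P(E_H,F\mid R')$, which immediately gives $q_H\ge\P(R\mid R')=\delta_i$ without computing any conditional probabilities. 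Your version carries out the same decomposition explicitly via an excursion count at $u$ and a translation-invariance argument, which is more elaborate but structurally the same step.

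The one point worth flagging in the bookkeeping: the escape probability $p_R=(c^{(i)}(u)R^{(i)}(u,\infty))^{-1}$ is the chance $X^{(i)}$ started at $u$ \emph{never returns to} $u$, whereas the conditional probability of $R$ given $\mc Z^{(i)}$ and $R'$ is the chance $X^{(i)}$ \emph{never drops below level} $u\cdot e_1$ after $\tau$, and these differ because an excursion on the $\Gamma$-side can revisit $u$ without falling below its level. Consequently your formula $P^{(i)}(R\cap\{1\in\mc L^{(i)}\})=p_{\mathrm{pre}}\,p_R$ should have $P^{(i)}_u(R)$ (the actual conditional probability, obtained by a further geometric series in the $\Gamma$-excursions) in place of $p_R$; this quantity is still $\Gamma$-measurable and still integrates to $\delta_i=\P(R\mid R')$, so the conclusion is unaffected. (The same identification of the regeneration probability with the escape probability appears in the proof of Lemma~\ref{lem:uberprob}, so your proposal is consistent with the paper on this point; the paper's proof of the present lemma avoids the question entirely by never unpacking $\P(R\mid R')$.) Also note the small typo: the bridge structure of $\mc Z^{(i)}$ at $u$ comes from $X^{(i-1)}$, not $X^{(i-2)}$.
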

\begin{proof}
Define $R=\{u \in \hat{\mc{L}}^{(i)}\}$, $R'=\{u\in \hat{\mc{L}}^{(i-1)}\}$, $F=\{X^{(i)}_n\cdot e_1\ge 1, \, \forall n\in [1,\tau]\}$ and
$F'=\{X^{(i)}_n\cdot e_1\ge 1, \,\forall n>\tau\}$, where $\tau$ is the hitting time of level $u$ by $X^{(i)}$.
Then,
\begin{align*}
q_H&=\P\left(R \Big|  E_H,R',F,F'    \right)\\
&=\dfrac{\P\left(R,E_H,R',F,F'    \right)}{\P\left(E_H,R',F,F'    \right)}\\
&=\dfrac{\P\left(R,E_H,R',F \right)}{\P\left(F', E_H,R',F    \right)}\\
&=\dfrac{\P(R,E_H,F\,|\,R')}{\P(F',E_H,F\,|\,R')}.
\end{align*}
Given $R'$, the event $R$ only depends on the environment $\mc{E}^{(i)}_{[u\cdot e_1-1,\infty)}=\mc{Z}^{(i)}_{[u\cdot e_1,\infty)}\cup \{(u-e_1,u)\}$ and the behaviour of the walker $X^{(i)}$ on this graph from the first hitting time of $u$ onwards.  Given $R'$, $E_H$  only depends on $\mc{E}^{(i)}_{(-\infty,u\cdot e_1]}$, and $F$ only depends on the behaviour of the walker $X^{(i)}$ on that graph until the first hitting time of $u$. Hence $R$ is conditionally independent of $E_H,F$ given $R'$ and
\begin{align*}
\dfrac{\P(R,E_H,F\,|\,R')}{\P(F',E_H,F\,|\,R')}=\dfrac{\P(R\,|\,R')\P(E_H,F\,|\,R')}{\P(F',E_H,F\,|\,R')}\ge \P(R\,|\,R'),
\end{align*}
The result follows since $\P(R\,|\,R')=\delta_i$.
\end{proof}

\begin{proof}[Proof of Lemma \ref{lem:L1}]
As indicated above, we prove (1) and (2) of the Lemma for $i \in \Z_+$ by induction on $i$.  For $i=0$ the result holds trivially.

For $i\in \N$, suppose that (1) and (2) hold for $i-1$.  In particular $\mc{L}^{(i-1)}$ is infinite almost surely.  Let $A=\{1\in \mc{L}^{(i-1)}\}$ and $B=\{1\in \mc{L}^{(i)}\}$.  Then $\P(B)=\vep_i>0$ and $\P(B|A)=\delta_i>0$.

To prove part (1) of the lemma we construct a finite-time random walk $\tilde{X}^{(i)}$ on a finite piece of graph $\tilde{\mc{Z}}^{(i)}$, based on the following collection of mutually independent random variables:
\begin{itemize}
\item Let $(Z_{\{0\}}, X_{\{0\}})$ have the joint law of the almost-surely finite graph with edge set $\mc{E}^{(i)}_{(-\infty,L_1^{(i-1)}]}$ and a $p^{(i)}$-random walk on it, started from $0$, run until time $\tau_1^{(i,i-1)}$.  {\em Below is a pictorial example of $(Z_{\{0\}}, X_{\{0\}})$.  The graph $Z_{\{0\}}$ is black, and the edges traversed by the walker $X^{(i)}$ up to the first hitting time of $L_1^{(i-1)}=5$ are the (pink) shadow.}
\begin{center}
\begin{tikzpicture}[scale=.5]
\node[circle,fill=black,scale=.2,label=below :{$0$}] (AA1) at (0,0) {};
\node[circle,fill=black,scale=.2,label=below:{$$}] (AA2) at (-1,0) {};
\node[circle,fill=black,scale=.2,label=below:{$$}] (AA3) at (-1,-1) {};
\node[circle,fill=black,scale=.2,label=below:{$$}] (AA4) at (-1,-2) {};
\node[circle,fill=black,scale=.2,label=below:{$$}] (AA5) at (-1,-3) {};
\node[circle,fill=black,scale=.2,label=below:{$$}] (AA6) at (-2,-3) {};
\node[circle,fill=black,scale=.2,label=below:{$$}] (AA7) at (-2,-2) {};
\node[circle,fill=black,scale=.2,label=below:{$$}] (AA8) at (-2,-1) {};
\node[circle,fill=black,scale=.2,label=below:{$$}] (AA9) at (-3,-2) {};

\node[circle,fill=black,scale=.2,label=below:{$$}] (AB1) at (1,0) {};
\node[circle,fill=black,scale=.2,label=below:{$$}] (AB2) at (2,0) {};
\node[circle,fill=black,scale=.2,label=below:{$$}] (AB3) at (2,-1) {};
\node[circle,fill=black,scale=.2,label=below:{$$}] (AB4) at (2,-2) {};
\node[circle,fill=black,scale=.2,label=below:{$$}] (AB5) at (3,-2) {};
\node[circle,fill=black,scale=.2,label=below:{$$}] (AB6) at (3,-1) {};
\node[circle,fill=black,scale=.2,label=below:{$$}] (AB7) at (3,0) {};
\node[circle,fill=black,scale=.2,label=below:{$$}] (AB8) at (4,0) {};
\node[circle,fill=black,scale=.2,label=below:{$$}] (AB9) at (4,-1) {};
\node[circle,fill=black,scale=.2,label=below:{$$}] (AB10) at (4,-2) {};
\node[circle,fill=black,scale=.4,label=right:{$$}] (AB11) at (5,-2) {};

\node[circle,fill=black,scale=.2,label=below:{$$}] (AC1) at (1,1) {};
\node[circle,fill=black,scale=.2,label=below:{$$}] (AC2) at (2,1) {};
\node[circle,fill=black,scale=.2,label=below:{$$}] (AC3) at (3,1) {};

\draw[color=pink,line width=2] (AA1)--(AA2)--(AA3)--(AA2)--(AA1)--(AB1)--(AB2)--(AB1)--(AC1)--(AC2)--(AC3)--(AB7)--(AB8)--(AB9)--(AB10)--(AB11);
\draw (AA1)--(AA2)--(AA3)--(AA5)--(AA6)--(AA7)--(AA8)--(AA3);
\draw (AA4)--(AA9);
\draw (AA1)--(AB1)--(AB2)--(AB3)--(AB4)--(AB5)--(AB6)--(AB7)--(AB8)--(AB9)--(AB10)--(AB11);
\draw (AB1)--(AC1)--(AC2)--(AC3)--(AB7);
\node (D1) at (5,2) {};
\node[label=below:$L_1^{(i-1)}$] (D2) at (5,-4) {};
\draw[dotted] (D1)--(D2);
\end{tikzpicture}
\end{center}
\item Conditional on $A\cap B^c$, let $M\ge 1$ denote the largest level reached by $X^{(i)}$ before it returns to level 0 (at time $T_0$), and we let $\bar{M}=\inf\{n>M: n \in  \mc{L}^{(i-1)}\}$.    Let $(Z_{\{k\}}, X_{\{k\}})_{k\in \N}$ be i.i.d.~with the joint law of
    \[\left(\mc{E}^{(i)}_{[0,\bar{M}]}, (X^{(i)}_m)_{m\in [T^{(i)}_1, T_0]}\right),\]
conditional on $A\cap B^c$.  Note that $X^{(i)}_{T^{(i)}_1}=X^{(i-1)}_{T^{(i-1)}_1}$ and $X^{(i)}_{T_0}=X^{(i-1)}_{T^{(i-1)}_1}-e_1$ since $1\in \mc{L}^{(i-1)}$. In other words, conditional on $A\cap B^c$ we look at $\mc{Z}^{(i)}$, and run the random walk $X^{(i)}$ from the hitting time of level 1, until it hits level 0.  The walk $X^{(i)}$ has not looked at $\mc{Z}^{(i)}_{[\bar{M},\infty)}$, so we will only remember that part up to level $\bar{M}$.  {\em Below is a pictorial example of $(Z_{\{1\}}, X_{\{1\}})$.  The graph $Z_{\{1\}}$ is black, and the edges traversed by the walker $X^{(i)}$ up to the first hitting time of $L_1^{(i-1)}=5$ are the (pink) shadow.  Here $M=12$ and $\bar{M}=L_4^{(i-1)}=13$.}
\begin{center}
\begin{tikzpicture}[scale=.5]
\node[circle,fill=black,scale=.2,label=below :{$$}] (AA1) at (0,0) {};

\node[circle,fill=black,scale=.2,label=below:{$$}] (AB1) at (1,0) {};
\node[circle,fill=black,scale=.2,label=below:{$$}] (AB2) at (2,0) {};
\node[circle,fill=black,scale=.2,label=below:{$$}] (AB3) at (2,-1) {};
\node[circle,fill=black,scale=.2,label=below:{$$}] (AB4) at (2,-2) {};
\node[circle,fill=black,scale=.2,label=below:{$$}] (AB5) at (3,-2) {};
\node[circle,fill=black,scale=.2,label=below:{$$}] (AB6) at (3,-1) {};
\node[circle,fill=black,scale=.2,label=below:{$$}] (AB7) at (3,0) {};
\node[circle,fill=black,scale=.2,label=below:{$$}] (AB8) at (4,0) {};
\node[circle,fill=black,scale=.2,label=below:{$$}] (AB9) at (4,-1) {};
\node[circle,fill=black,scale=.2,label=below:{$$}] (AB10) at (4,-2) {};
\node[circle,fill=black,scale=.4,label=right:{$$}] (AB11) at (5,-2) {};

\node[circle,fill=black,scale=.2,label=below:{$$}] (AC1) at (1,1) {};
\node[circle,fill=black,scale=.2,label=below:{$$}] (AC2) at (2,1) {};

\node[circle,fill=black,scale=.2,label=below:{$$}] (AC3) at (6,-2) {};
\node[circle,fill=black,scale=.2,label=below:{$$}] (AC4) at (7,-2) {};
\node[circle,fill=black,scale=.2,label=below:{$$}] (AC5) at (8,-2) {};
\node[circle,fill=black,scale=.2,label=below:{$$}] (AC6) at (9,-2) {};
\node[circle,fill=black,scale=.2,label=below:{$$}] (AC7) at (9,-3) {};
\node[circle,fill=black,scale=.2,label=below:{$$}] (AC8) at (9,-4) {};
\node[circle,fill=black,scale=.2,label=below:{$$}] (AC9) at (8,-4) {};
\node[circle,fill=black,scale=.2,label=below:{$$}] (AC10) at (7,-4) {};
\node[circle,fill=black,scale=.2,label=below:{$$}] (AC11) at (10,-4) {};
\node[circle,fill=black,scale=.2,label=below:{$$}] (AC12) at (10,-3) {};
\node[circle,fill=black,scale=.4,label=below:{$$}] (AC13) at (11,-4) {};
\node[circle,fill=black,scale=.2,label=below:{$$}] (AC14) at (12,-4) {};
\node[circle,fill=black,scale=.2,label=below:{$$}] (AC15) at (13,-4) {};
\node[circle,fill=black,scale=.2,label=below:{$$}] (AC16) at (13,-3) {};
\node[circle,fill=black,scale=.2,label=below:{$$}] (AC17) at (13,-2) {};
\node[circle,fill=black,scale=.2,label=below:{$$}] (AC18) at (13,-1) {};
\node[circle,fill=black,scale=.4,label=below:{$$}] (AC19) at (14,-1) {};

\node[circle,fill=black,scale=.2,label=below:{$$}] (AD1) at (5,-1) {};
\node[circle,fill=black,scale=.2,label=below:{$$}] (AD2) at (6,-1) {};

\node[circle,fill=black,scale=.2,label=below:{$$}] (AE1) at (13,0) {};
\node[circle,fill=black,scale=.2,label=below:{$$}] (AE2) at (12,0) {};
\node[circle,fill=black,scale=.2,label=below:{$$}] (AE3) at (12,-1) {};

\node[circle,fill=black,scale=.2,label=below:{$$}] (AF1) at (10,-2) {};
\node[circle,fill=black,scale=.2,label=below:{$$}] (AF2) at (10,-1) {};
\node[circle,fill=black,scale=.2,label=below:{$$}] (AF3) at (9,-1) {};

\draw[color=pink,line width=2] (AA1)--(AB1)--(AB2)--(AB3)--(AB4)--(AB5)--(AB6)--(AB7)--(AB8)--(AB9)--(AB10)--(AB11)--(AD1)--(AD2)--(AC3)--(AC4)--(AC5)--(AC6)--(AF3)--(AC6)--(AC7)--(AC8)--(AC11)--(AC12)--(AC11)--(AC13)--(AC14)--(AC15)--(AC16);
\draw[color=pink,line width=2] (AF3)--(AF2)--(AF1)--(AC12);
\draw (AA1)--(AB1)--(AB2)--(AB3)--(AB4)--(AB5)--(AB6)--(AB7)--(AB8)--(AB9)--(AB10)--(AB11)--(AC3)--(AC4)--(AC5)--(AC6)--(AC7)--(AC8)--(AC9)--(AC10);
\draw (AC8)--(AC11)--(AC12)--(AC11)--(AC13)--(AC14)--(AC15)--(AC16)--(AC17)--(AC18)--(AC19);
\draw (AB1)--(AC1)--(AC2);
\draw (AB3)--(AB6);
\draw (AB11)--(AD1)--(AD2)--(AC3);
\draw (AC18)--(AE1)--(AE2)--(AE3)--(AC18);
\draw (AC11)--(AF1)--(AF2)--(AF3)--(AC6);

\node (D1) at (5,2) {};
\node[label=below:$L_2^{(i-1)}$] (D2) at (5,-6) {};
\draw[dotted] (D1)--(D2);
\node (D3) at (11,2) {};
\node[label=below:$L_3^{(i-1)}$] (D4) at (11,-6) {};
\draw[dotted] (D3)--(D4) {};
\node (D5) at (14,2) {};
\node[label=below:$L_4^{(i-1)}$] (D6) at (14,-6) {};
\draw[dotted] (D5)--(D6) {};

\node (D7) at (1,2) {};
\node[label=below:$1$] (D8) at (1,-6) {};
\draw[dotted] (D7)--(D8) {};
\end{tikzpicture}
\end{center}

\item For finite (connected) graphs $H\subset \Z^d$ with at least two edges, a marked point $o$, and whose (unique) right-most vertex is a leaf, $v$,  we let $(X^{H}_{\{k\}})_{k \in \mathbb{N}}$ be i.i.d. with law that of a $p^{(i)}$ walk on $H$, started at $o$, observed until the hitting time of $v$.
\item Let $(W_{\{k\}})_{k\in \N}$ be i.i.d. $U[0,1]$ random variables.
\end{itemize}

We will also need the notion of graph addition (concatenation).  Suppose that $H$ and $H'$ are finite connected subgraphs of $\Z^d$ such that $H$ has a unique right most vertex $v$, and $H'$ has a unique left most vertex $v'$.  We write $H+H'\subset \Z^d$ to denote the graph obtained by ``concatenation with a shared edge'', identifying $v-e_1$ with $v'$ and identifying $v$ with $v'+e_1$, as in the example below.
\begin{center}
\begin{tikzpicture}[scale=.5]
\node[circle,fill=black,scale=.2,label=below :{$v$}] (AA1) at (0,0) {};
\node[circle,fill=black,scale=.2,label=below:{$$}] (AA2) at (-1,0) {};
\node[circle,fill=black,scale=.2,label=below:{$$}] (AA3) at (-1,-1) {};
\node[circle,fill=black,scale=.2,label=below:{$$}] (AA4) at (-1,-2) {};
\node[circle,fill=black,scale=.2,label=below:{$$}] (AA5) at (-1,-3) {};
\node[circle,fill=black,scale=.2,label=below:{$$}] (AA6) at (-2,-3) {};
\node[circle,fill=black,scale=.2,label=below:{$$}] (AA7) at (-2,-2) {};
\node[circle,fill=black,scale=.2,label=below:{$$}] (AA8) at (-2,-1) {};
\node[circle,fill=black,scale=.2,label=below:{$$}] (AA9) at (-3,-2) {};
\node (AAA) at (-1,-4) {$H$};
\draw (AA1)--(AA2)--(AA3)--(AA5)--(AA6)--(AA7)--(AA8)--(AA3);
\draw (AA4)--(AA9);
\end{tikzpicture}
\hspace{.5cm}
\begin{tikzpicture}[scale=.5]
\node[circle,fill=black,scale=.2,label=below :{$v'$}] (AA1) at (0,0) {};
\node[circle,fill=black,scale=.2,label=below:{$$}] (AB1) at (1,0) {};
\node[circle,fill=black,scale=.2,label=below:{$$}] (AB2) at (2,0) {};
\node[circle,fill=black,scale=.2,label=below:{$$}] (AB3) at (2,-1) {};
\node[circle,fill=black,scale=.2,label=below:{$$}] (AB4) at (2,-2) {};
\node[circle,fill=black,scale=.2,label=below:{$$}] (AB5) at (3,-2) {};
\node[circle,fill=black,scale=.2,label=below:{$$}] (AB6) at (3,-1) {};
\node[circle,fill=black,scale=.2,label=below:{$$}] (AB7) at (3,0) {};
\node[circle,fill=black,scale=.2,label=below:{$$}] (AB8) at (4,0) {};
\node[circle,fill=black,scale=.2,label=below:{$$}] (AB9) at (4,-1) {};
\node[circle,fill=black,scale=.2,label=below:{$$}] (AB10) at (4,-2) {};
\node[circle,fill=black,scale=.2,label=right:{$$}] (AB11) at (5,-2) {};
\node[circle,fill=black,scale=.2,label=below:{$$}] (AC1) at (1,1) {};
\node[circle,fill=black,scale=.2,label=below:{$$}] (AC2) at (2,1) {};
\node[circle,fill=black,scale=.2,label=below:{$$}] (AC3) at (3,1) {};
\node (AAA) at (2,-3) {$H'$};
\draw (AA1)--(AB1)--(AB2)--(AB3)--(AB4)--(AB5)--(AB6)--(AB7)--(AB8)--(AB9)--(AB10)--(AB11);
\draw (AB1)--(AC1)--(AC2)--(AC3)--(AB7);
\end{tikzpicture}
\hspace{1cm}
\begin{tikzpicture}[scale=.5]
\node[circle,fill=black,scale=.2,label=below :{$v$}] (AA1) at (0,0) {};
\node[circle,fill=black,scale=.2,label=below:{$$}] (AA2) at (-1,0) {};
\node[circle,fill=black,scale=.2,label=below:{$$}] (AA3) at (-1,-1) {};
\node[circle,fill=black,scale=.2,label=below:{$$}] (AA4) at (-1,-2) {};
\node[circle,fill=black,scale=.2,label=below:{$$}] (AA5) at (-1,-3) {};
\node[circle,fill=black,scale=.2,label=below:{$$}] (AA6) at (-2,-3) {};
\node[circle,fill=black,scale=.2,label=below:{$$}] (AA7) at (-2,-2) {};
\node[circle,fill=black,scale=.2,label=below:{$$}] (AA8) at (-2,-1) {};
\node[circle,fill=black,scale=.2,label=below:{$$}] (AA9) at (-3,-2) {};
\draw (AA1)--(AA2)--(AA3)--(AA5)--(AA6)--(AA7)--(AA8)--(AA3);
\draw (AA4)--(AA9);
\node[circle,fill=black,scale=.2,label=above :{$v'$}] (AA1') at (-1,0) {};
\node[circle,fill=black,scale=.2,label=below:{$$}] (AB1) at (0,0) {};
\node[circle,fill=black,scale=.2,label=below:{$$}] (AB2) at (1,0) {};
\node[circle,fill=black,scale=.2,label=below:{$$}] (AB3) at (1,-1) {};
\node[circle,fill=black,scale=.2,label=below:{$$}] (AB4) at (1,-2) {};
\node[circle,fill=black,scale=.2,label=below:{$$}] (AB5) at (2,-2) {};
\node[circle,fill=black,scale=.2,label=below:{$$}] (AB6) at (2,-1) {};
\node[circle,fill=black,scale=.2,label=below:{$$}] (AB7) at (2,0) {};
\node[circle,fill=black,scale=.2,label=below:{$$}] (AB8) at (3,0) {};
\node[circle,fill=black,scale=.2,label=below:{$$}] (AB9) at (3,-1) {};
\node[circle,fill=black,scale=.2,label=below:{$$}] (AB10) at (3,-2) {};
\node[circle,fill=black,scale=.2,label=right:{$$}] (AB11) at (4,-2) {};
\node[circle,fill=black,scale=.2,label=below:{$$}] (AC1) at (0,1) {};
\node[circle,fill=black,scale=.2,label=below:{$$}] (AC2) at (1,1) {};
\node[circle,fill=black,scale=.2,label=below:{$$}] (AC3) at (2,1) {};
\node (AAA) at (0,-4) {$H+H'$};
\draw (AA1')--(AB1)--(AB2)--(AB3)--(AB4)--(AB5)--(AB6)--(AB7)--(AB8)--(AB9)--(AB10)--(AB11);
\draw (AB1)--(AC1)--(AC2)--(AC3)--(AB7);
\end{tikzpicture}
\end{center}

Firstly, set the first block of $\tilde{\mc{Z}}^{(i)}$ equal to $Z_{\{0\}}$, and note that the (unique) right most vertex $u_0$ of this finite graph is a leaf.  Let $\tilde{X}^{(i)}$ follow $X_{\{0\}}$ (in other words, the increments of $\tilde{X}^{(i)}$ are those of $X_{\{0\}}$ until those increments are exhausted).

If $W_{1}<\delta_i$ then we set $\tilde{L}^{(i)}_1=u_0\cdot e_1$.  If $W_{1}\ge \delta_i$ then we let the next block of $\tilde{\mc{Z}}^{(i)}$ be $Z_{\{1\}}$ and we let $\tilde{X}^{(i)}$ follow $X_{\{1\}}$.  Let $H_1$ be the finite graph $Z_{\{0\}}+Z_{\{1\}}$, with marked point $u_0-e_1$.  The (unique) right-most point of this graph is $u_1$.  We then let $\tilde{X}^{(i)}$ follow $X^{H_1}_{\{1\}}$ (it is exhausted when it reaches $u_1$).  If $W_{2}<\delta_i$ then we set $\tilde{L}^{(i)}_1$ to be $u_1\cdot e_1$ (the right most level of $H_1$).   Otherwise we continue recursively.  Suppose that $W_{k}\ge \delta_i$ for each $k\le k_1$.  Then we let the next block of $\tilde{\mc{Z}}^{(i)}$ be $Z_{\{k_1\}}$, and we let $\tilde{X}^{(i)}$ follow $X_{\{k_1\}}$.  Let $H_{k_1}$ be the finite graph $\sum_{k=0}^{k_1}Z_{\{k\}}$, with marked point $u_{k_1-1}-e_1$, where $u_{k_1-1}$ is the rightmost point of  $\sum_{k\le k_1-1}Z_{\{k\}}$.  Let $\tilde{X}^{(i)}$ follow $X^{H_{k_1}}_{\{1\}}$ (it is exhausted when it reaches $u_{k_1}$).  Let $K=\inf\{j:W_{k}<\delta_i\}$ and we set $\tilde{L}^{(i)}_1=u_{K}\cdot e_1$.
Set  $\tilde{\mc{Z}}=\sum_{k\le K}Z_{\{k\}}$.

By construction the first claim of the lemma holds for $\tilde{X}^{(i)}$.  We claim that the pair $(\tilde{X}^{(i)},\tilde{\mc{Z}})$ has the same law as $(({X}^{(i)}_n)_{n\leq L_1^{(i)}},\mc{E}_{(-\infty,L^{(i)}_1]})$.  To see this, note that it is trivially true up to the first hitting time of $L_1^{(i-1)}$.  Whether $L_1^{(i-1)}$ is a regeneration level for $X^{(i)}$ is independent of  $\mc{E}^{(i)}_{(-\infty,L_1^{(i-1)}]}$ and therefore the probability of this event is $\vep'_i$.  Conditional on this event not occurring, we can examine the joint law of the environment $\mc{E}^{(i)}_{[L_1^{(i-1)}-1,\infty)}$ and the walk $X^{(i)}$ on it (starting from $X^{(i-1)}_{\tau^{(i-1,i-1)}_1}$).  In particular the walker $X^{(i)}$ only walks a finite distance $M$ (possibly 0) in direction $e_1$ before backtracking to $L_1^{(i-1)}-1$, and the joint law of the part of the environment up to the first level $\bar{M}$ in $\mc{L}^{(i-1)}$ that is greater than $M$, together with $X^{(i)}$ on this part of the graph is that of $(Z_{\{1\}},X_{\{1\}})$.  The environment to the right of $\bar{M}$ is independent of that to the left, so we can delete it and generate a new independent copy without changing the distribution. The walker $\tilde{X}^{(i)}$ then has the law of $X^{(i)}$ (conditional on the environment up to level $\bar{M}$) until it hits level $\bar{M}$.  Whether or not $\bar{M}$ is a regeneration level for $X^{(i)}$ is independent of $\mc{E}^{(i)}_{(-\infty,\bar{M}]}$, and thus we continue to have the correct law up until the first time that the walks reach a level in $\mc{L}^{(i)}$.

We now prove the second claim similarly, by constructing a finite-time random walk $\tilde{X}'^{(i)}$ on a finite piece of graph $\tilde{\mc{Z}}'^{(i)}$ (which need only be constructed from level $1$ onwards), based on the following collection of mutually independent random variables:

\begin{itemize}
\item Let $(Z'_{\{0\}}, X'_{\{0\}})$ have the joint law of \[
\left(\mc{E}^{(i)}_{(-\infty,L_2^{(i-1)}]}, (X^{(i)}_m)_{m\le \tau^{(i,i-1)}_{2}}\right)\] conditional on $B$.  In other words, conditional on $B$ we look at that part of $\mc{Z}^{(i)}$ up to the next $(i-1)$-uber-regeneration level, and run the random walk $X^{(i)}$ on it until it hits this level.  Given $B$, after first reaching level 1 it must hit this level without returning to level  $0$. {\em In the pictorial example below, the walker first backtracks from 0, but upon reaching level $1=L^{(i)}_1$ it can never backtrack past level 1, and we observe the walk until it hits $L^{(i-1)}_2$.}
\begin{center}
\begin{tikzpicture}[scale=.5]
\node[circle,fill=black,scale=.2,label=below :{$0$}] (AA1) at (0,0) {};
\node[circle,fill=black,scale=.2,label=below:{$$}] (AA2) at (-1,0) {};
\node[circle,fill=black,scale=.2,label=below:{$$}] (AA3) at (-1,-1) {};
\node[circle,fill=black,scale=.2,label=below:{$$}] (AA4) at (-1,-2) {};
\node[circle,fill=black,scale=.2,label=below:{$$}] (AA5) at (-1,-3) {};
\node[circle,fill=black,scale=.2,label=below:{$$}] (AA6) at (-2,-3) {};
\node[circle,fill=black,scale=.2,label=below:{$$}] (AA7) at (-2,-2) {};
\node[circle,fill=black,scale=.2,label=below:{$$}] (AA8) at (-2,-1) {};
\node[circle,fill=black,scale=.2,label=below:{$$}] (AA9) at (-3,-2) {};

\node[circle,fill=black,scale=.2,label=below:{$$}] (AB1) at (1,0) {};
\node[circle,fill=black,scale=.2,label=below:{$$}] (AB2) at (2,0) {};
\node[circle,fill=black,scale=.2,label=below:{$$}] (AB3) at (2,-1) {};
\node[circle,fill=black,scale=.2,label=below:{$$}] (AB4) at (2,-2) {};
\node[circle,fill=black,scale=.2,label=below:{$$}] (AB5) at (3,-2) {};
\node[circle,fill=black,scale=.2,label=below:{$$}] (AB6) at (3,-1) {};
\node[circle,fill=black,scale=.2,label=below:{$$}] (AB7) at (3,0) {};
\node[circle,fill=black,scale=.2,label=below:{$$}] (AB8) at (4,0) {};
\node[circle,fill=black,scale=.2,label=below:{$$}] (AB9) at (4,-1) {};
\node[circle,fill=black,scale=.2,label=below:{$$}] (AB10) at (4,-2) {};
\node[circle,fill=black,scale=.4,label=right:{$$}] (AB11) at (5,-2) {};

\node[circle,fill=black,scale=.2,label=below:{$$}] (AC1) at (1,1) {};
\node[circle,fill=black,scale=.2,label=below:{$$}] (AC2) at (2,1) {};
\node[circle,fill=black,scale=.2,label=below:{$$}] (AC3) at (3,1) {};

\draw[color=pink,line width=2] (AA1)--(AA2)--(AA3)--(AA2)--(AA1)--(AB1)--(AB2)--(AB1)--(AC1)--(AC2)--(AC3)--(AB7)--(AB8)--(AB9)--(AB10)--(AB11);
\draw (AA1)--(AA2)--(AA3)--(AA5)--(AA6)--(AA7)--(AA8)--(AA3);
\draw (AA4)--(AA9);
\draw (AA1)--(AB1)--(AB2)--(AB3)--(AB4)--(AB5)--(AB6)--(AB7)--(AB8)--(AB9)--(AB10)--(AB11);
\draw (AB1)--(AC1)--(AC2)--(AC3)--(AB7);
\node (D1) at (5,2) {};
\node[label=below:$L_2^{(i-1)}$] (D2) at (5,-4) {};
\draw[dotted] (D1)--(D2);

\node (D3) at (1,2) {};
\node[label=below:$L_1^{(i)}$] (D4) at (1,-4) {};
\draw[dotted,line width=2] (D3)--(D4);
\end{tikzpicture}
\end{center}
\item Let $(Z'_{\{k\}}, X'_{\{k\}})_{k\in \N}\sim (Z_{\{k\}}, X_{\{k\}})_{k\in \N}$, where the latter appeared in the previous construction.
\item For finite (connected) graphs $H\subset \Z^d$ with at least 2 edges, a marked point $o$, and whose (unique) right-most vertex is (a leaf) $v$, and whose unique left-most vertex is (a leaf) $u$ we let $(X^{'H}_{\{k\}})_{k \in \mathbb{N}}$ be i.i.d. with law that of a $p^{(i)}$ walk on $H$, started at $o$ conditional on hitting $v$ before $u$, observed until the hitting time of $v$.
\item Let $(W'_{\{k\}})_{k\in \N}$ be i.i.d. $U[0,1]$ random variables.
\end{itemize}
Set the first block of $\tilde{\mc{Z}}'^{(i)}$ equal to $Z'_{\{0\}}$ and we let $\tilde{X}'^{(i)}$ follow $X'_{\{0\}}$ (it is exhausted when it reaches the right-most point $v_0$ of $Z'_{\{0\}}$). Set $H'_{\{1\}}=\mc{Z}'_{\{0\}}$.

If $W'_{1}<q_{H'_{\{1\}}}$ then set $\tilde{L}'^{(i)}_{2}=v_0\cdot e_1$.  Otherwise $W'_{1}\ge q_{H'_{\{1\}}}$ and we proceed to define subwalks on subgraphs as follows.  Suppose that $W'_{k}\ge q_{H'_{\{k\}}}$ for $k\in [k'_1]$, and let $v_k$ be the unique right-most vertex of $H'_{\{k\}}$.    Then we set the next block of $\tilde{\mc{Z}}'^{(i)}$ equal to $Z'_{\{k'_1\}}$ and let $\tilde{X}'^{(i)}$ follow $X'_{\{k'_1\}}$.  Let $H'_{k'_1+1}=H'_{\{k'_1\}}+Z'_{\{k'_1\}}$, with marked point $v_{k'_1}-e_1$.  Then we let $\tilde{X}'^{(i)}$ follow ${X'}^{H'_{k'_1+1}}_{\{k'_1\}}$.

Since $q_{H'_{\{k\}}}>\delta_i$ for each $k$, we have that $K':=\inf\{k:W'_{k}\ge q_{H'_{\{k\}}}\}$ is almost surely finite, and we set $\tilde{L}'^{(i)}_2=v_{K'}\cdot e_1$.  Set $\tilde{\mc{Z}}'=H'_{K'}$.  That $(\tilde{\mc{Z}}',\tilde{X}'^{(i)})$ has the desired properties can now be checked as above.
\end{proof}

The final ingredient we need to deduce the existence of velocities is the finiteness of the expectation of the inter-regeneration distance.

\begin{LEM}
For each $i \in \Z_+$,
\[\E[L^{(i)}_2-L_1^{(i)}]=\vep_i^{-1}<\infty.\]
\end{LEM}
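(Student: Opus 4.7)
The plan is to view the ordered uber-regeneration levels $(L^{(i)}_j)_{j\ge 1}$ as a delayed integer-valued renewal process and to combine the constant renewal density from Lemma \ref{lem:uberprob} with the elementary renewal theorem.

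First, I would introduce the counting process $N_n:=\#\{k\in[1,n]: k\in\mc{L}^{(i)}\}=\sup\{j\ge 0: L^{(i)}_j\le n\}$ (with the convention $L^{(i)}_0:=0$). Summing the identity of Lemma \ref{lem:uberprob} over $k\in[1,n]$ gives
$$\E[N_n]=\sum_{k=1}^n\P\bigl(k\in\mc{L}^{(i)}\bigr)=n\vep_i,$$
so $\E[N_n]/n=\vep_i$ \emph{exactly} for every $n\ge 1$. On the other hand, the regeneration structure of Lemma \ref{lem:iid} applied successively at $L=L^{(i)}_1,L^{(i)}_2,\ldots$ shows that the increments $(L^{(i)}_{j+1}-L^{(i)}_j)_{j\ge 1}$ are i.i.d.\ and independent of $L^{(i)}_1$. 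Together with Lemma \ref{lem:L1}, which guarantees $L^{(i)}_1<\infty$ a.s., this realises $(L^{(i)}_j)_{j\ge 1}$ as a genuine delayed renewal process whose counting function is $N_n$; denote the common mean of its increments by $\mu:=\E[L^{(i)}_2-L^{(i)}_1]\in[1,\infty]$.

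The elementary renewal theorem (in its delayed form) then yields $\E[N_n]/n\to 1/\mu$ as $n\to\infty$, under the convention $1/\infty=0$. Comparing with the first step forces $\vep_i=1/\mu$, equivalently $\mu=\vep_i^{-1}$, which is finite because $\vep_i>0$.

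The only point that requires a little care is the delayed form of the elementary renewal theorem, which reduces to the standard (non-delayed) version via the almost-sure sandwich
$$N'\bigl(n-L^{(i)}_1\bigr)\indic{L^{(i)}_1\le n}\le N_n\le 1+N'(n),$$
where $N'(t):=\sup\{j\ge 0:\sum_{k=1}^j(L^{(i)}_{k+1}-L^{(i)}_k)\le t\}$ is the ordinary renewal counting process built from the same (i.i.d., independent of $L^{(i)}_1$) increments. Dividing by $n$ and applying the classical elementary renewal theorem to $N'$, together with dominated convergence using the a.s.\ finiteness of $L^{(i)}_1$, then delivers the required limit. This is the main technical step; everything else is essentially a direct combination of earlier lemmas.
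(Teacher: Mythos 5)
Your proof is correct, and it shares the paper's essential strategy: combine the exact constant renewal density $\P(k\in\mc{L}^{(i)})=\vep_i$ from Lemma \ref{lem:uberprob} with the delayed-renewal structure of $(L^{(i)}_j)_{j\ge 1}$ (Lemmas \ref{lem:iid} and \ref{lem:L1}), and read off $\vep_i=1/\E[L^{(i)}_2-L^{(i)}_1]$ from a renewal theorem. The route does differ in which renewal theorem is invoked. The paper applies the discrete (Blackwell/Erd\H{o}s--Feller--Pollard) renewal theorem to $\P(\exists k\ge 2:L^{(i)}_k-L^{(i)}_1=\ell-n)$, which requires checking aperiodicity via $\P(L^{(i)}_2-L^{(i)}_1=m)>0$ for all $m\in\N$, and then sums against the law of the delay $L^{(i)}_1$. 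You instead sum the exact identity over $k\le n$ to get $\E[N_n]=n\vep_i$ and invoke the elementary renewal theorem $\E[N_n]/n\to 1/\mu$, which is agnostic to periodicity; the price is handling the delayed version, which your sandwich does. Both settle the case $\mu=\infty$ by contradiction with $\vep_i>0$, so finiteness comes for free. One small point to tighten: the phrase \emph{dominated convergence using the a.s.\ finiteness of $L^{(i)}_1$} is a bit compressed. Since the increments are independent of $L^{(i)}_1$, one should write
\[
\frac{\E\!\left[N'\!\bigl(n-L^{(i)}_1\bigr)\indic{L^{(i)}_1\le n}\right]}{n}
=\sum_{m\ge 1}\P\!\left(L^{(i)}_1=m\right)\frac{\E[N'(n-m)]}{n},
\]
note that each summand tends to $\P(L^{(i)}_1=m)/\mu$, and dominate by $\P(L^{(i)}_1=m)\,\sup_{n}\E[N'(n)]/n$, which is summable because $\sup_n\E[N'(n)]/n<\infty$ by the (non-delayed) elementary renewal theorem. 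With that spelled out, the argument is complete and a clean alternative to the paper's.
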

\begin{proof}
This closely follows the proof of \cite[Lemma 3.2.5]{Zeitouni} (attributed to Martin Zerner), but note that we deal with the (common) regeneration levels rather than the regeneration times (since these are not common to all walks).

By Lemma \ref{lem:uberprob}, $\vep_i=\lim_{\ell\to \infty}\P(\ell \in \mc{L}^{(i)})$. On the other hand,
\begin{align*}
\lim_{\ell\to \infty}\P(\ell \in \mc{L}^{(i)})&=\lim_{\ell\to \infty}\P(\exists k\ge 2: \ell =L^{(i)}_k)\\
&=\lim_{\ell\to \infty}\sum_{n \ge 1}\P(L^{(i)}_1=n,\exists k\ge 2: L^{(i)}_k-L^{(i)}_1=\ell-n)\\
&=\lim_{\ell\to \infty}\sum_{n \ge 1}\P(L^{(i)}_1=n)\P(\exists k\ge 2: L^{(i)}_k-L^{(i)}_1=\ell-n).
\end{align*}
By the renewal theorem and the fact that $\P(L^{(i)}_2-L^{(i)}_1=m)>0$ for each $m\in \N$ we have
\[
\lim_{\ell\to \infty}\P(\exists k\ge 2: L^{(i)}_k-L^{(i)}_1=\ell-n)=\E[L^{(i)}_2-L_1^{(i)}]^{-1}.\]
It is then easy to show that
\[\lim_{\ell\to \infty}\sum_{n \ge 1}\P(L^{(i)}_1=n)\P(\exists k\ge 2: L^{(i)}_k-L^{(i)}_1=\ell-n)=\E[L^{(i)}_2-L_1^{(i)}]^{-1},\]
and the result follows.
\end{proof}

\begin{proof}[{\bf Proof of Theorem \ref{thm:main1}(b)}]
It now follows from standard arguments invoking the law of large numbers for the i.i.d.~sequence of processes between inter-regeneration levels that, almost surely,
\[\frac{X^{(i)}_n}{n}\cdot e_1\to \frac{\E[L^{(i)}_{2}-L^{(i)}_{1}]}{\E[\tau^{(i,i)}_{2}-\tau^{(i,i)}_{1}]},\]
 where the right hand side is deterministic, and is equal to zero if and only if $\E[\tau^{(i,i)}_{2}-\tau^{(i,i)}_{1}]=\infty$. If $v^{(i)}\cdot e_1>0$ then Lemma \ref{lem:funny}(2) applies, and otherwise Lemma \ref{lem:funny}(3) applies to give the result.
\end{proof}

\section{Ballistic phase}\label{balsec}

The aim of this section is to prove part (a) of Theorem \ref{thm:main2}, which characterises the ballistic phase. This will be done by estimating the expected time for the $i$th random walk $X^{(i)}$ to progress a unit distance in the $\ell^{(i)}$ direction (see Lemma 4.2). First, though, we confirm that $t_{(i)}$, as introduced below \eqref{varphidef}, is well-defined and can be used to control the backtracking probability for the original random walk $X^{(0)}$ in the direction $\ell^{(i)}$. We recall the notation $\varphi_{(i)}(t)=\mathbb{E}[\exp\{-tX^{(0)}_1\cdot \ell^{(i)}\}]$ from \eqref{varphidef}.

\begin{LEM}\label{backtrackprob} Assume Condition \ref{cond:1}. For each $i\in\mathbb{N}\cup\{\infty\}$, there exists a unique solution $t_{(i)}>0$ to the equation $\varphi_{(i)}(t)=1$. Moreover, for all $h\geq 0$,
\[\mathbb{P}\left(-\min_{n\geq 0}X_n^{(0)}\cdot\ell^{(i)}\geq h\right)\leq \ee^{-t_{(i)}h}.\]
\end{LEM}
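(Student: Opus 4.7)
The plan is to handle the two claims in sequence, first using convexity of $\varphi_{(i)}$ for the existence/uniqueness of $t_{(i)}$, then using the standard exponential martingale trick (Wald's identity) for the backtracking bound.

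For the existence and uniqueness of $t_{(i)}>0$, note that $\varphi_{(i)}$ is a finite sum of exponentials in $t$ (since $X_1^{(0)}$ is supported on $\{\pm e_j\}$), hence is smooth and convex on $\mathbb{R}$. Evaluating, $\varphi_{(i)}(0)=1$ and
\[\varphi_{(i)}'(0)=-\E\!\left[X_1^{(0)}\cdot\ell^{(i)}\right]=-\delta^{(0)}\cdot\ell^{(i)}<0\]
by Condition \ref{cond:1}(c); in particular $\ell^{(i)}\ne 0$. Combined with Condition \ref{cond:1}(b), this forces $\mathbb{P}(-X_1^{(0)}\cdot\ell^{(i)}>0)>0$ (there exists a basis direction on which the walk can step in a way that decreases $X\cdot\ell^{(i)}$), so $\varphi_{(i)}(t)\to\infty$ as $t\to\infty$. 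Strict convexity together with $\varphi_{(i)}(0)=1$, $\varphi_{(i)}'(0)<0$, and $\varphi_{(i)}(t)\to\infty$ then gives a unique positive root $t_{(i)}$.

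For the tail bound, set
\[M_n=\exp\{-t_{(i)}X_n^{(0)}\cdot\ell^{(i)}\}.\]
Since the increments of $X^{(0)}$ are i.i.d., the choice $\varphi_{(i)}(t_{(i)})=1$ makes $(M_n)_{n\ge 0}$ a non-negative martingale with $M_0=1$. For $h\ge 0$, let
\[\tau_h=\inf\{n\ge 0 : X_n^{(0)}\cdot\ell^{(i)}\le -h\},\]
so that $\{-\min_n X_n^{(0)}\cdot\ell^{(i)}\ge h\}=\{\tau_h<\infty\}$. Applying optional stopping at the bounded time $\tau_h\wedge n$ and discarding the non-negative contribution on $\{\tau_h>n\}$ gives
\[1=\E[M_{\tau_h\wedge n}]\ge \E\!\left[M_{\tau_h}\mathbf{1}_{\{\tau_h\le n\}}\right]\ge \ee^{t_{(i)}h}\,\P(\tau_h\le n),\]
where the last inequality uses $X_{\tau_h}^{(0)}\cdot\ell^{(i)}\le -h$ on $\{\tau_h<\infty\}$. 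Monotone convergence in $n$ yields $\P(\tau_h<\infty)\le\ee^{-t_{(i)}h}$, which is the claimed inequality.

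There is no substantive obstacle: the only points requiring mild care are verifying that $\ell^{(i)}\ne 0$ and that some admissible step of $X^{(0)}$ decreases $X\cdot\ell^{(i)}$ (both immediate from Condition \ref{cond:1}), and that the optional stopping argument is run through a truncated stopping time before taking $n\to\infty$, since $\tau_h$ need not be a.s.\ finite.
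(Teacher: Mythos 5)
Your proposal is correct and takes essentially the same approach as the paper: strict convexity of $\varphi_{(i)}$ together with $\varphi_{(i)}(0)=1$, $\varphi_{(i)}'(0)=-\delta^{(0)}\cdot\ell^{(i)}<0$ (Condition \ref{cond:1}(c)) and $\varphi_{(i)}(t)\to\infty$ yields the unique positive root, and the tail estimate is exactly Lundberg's inequality. The only difference is that you prove Lundberg's inequality from scratch via the exponential martingale $M_n=\exp\{-t_{(i)}X^{(0)}_n\cdot\ell^{(i)}\}$ and optional stopping at $\tau_h\wedge n$, whereas the paper simply cites it.
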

\begin{proof} Fix $i\in\mathbb{N}\cup\{\infty\}$. Clearly $\varphi_{(i)}(0)=1$.  Since $\ell^{(i)}\neq 0$, we have $\ell^{(i)}\cdot e>0$ for some $e\in\{\pm e_j:\:j=1,\dots,d\}$.  Moreover,
\[\varphi_{(i)}(t)\ge p^{(0)}(-e)\exp\{t e\cdot \ell^{(i)}\},\]
so $\varphi_{(i)}(t)\rightarrow\infty$ as $t\rightarrow\infty$. Next, as the sum of convex functions, at least one of which is strictly convex, we have that $\varphi_{(i)}$ is a strictly convex function. Thus to check the first claim of the lemma it will suffice to show that $\varphi_{(i)}'(0)<0$. To this end, we compute the relevant derivative directly,
\[\varphi_{(i)}'(0)=-\mathbb{E}[X^{(0)}_1\cdot \ell^{(i)}]=-\delta^{(0)}\cdot\ell^{(i)},\]
and observe that this is strictly negative by Condition \ref{cond:1}(c). Given the existence of $t_{(i)}>0$, the remaining claim of the lemma is an immediate application of Lundberg's inequality \cite[Corollary II.3.4]{AA}.
\end{proof}

To state the next lemma, we define the stopping times
\[T^{(i,j)}_{n}:=\inf\left\{m:\:X^{(i)}_m\cdot \ell^{(j)}\geq n\right\},\]
which under Condition \ref{cond:1} are almost surely finite by Theorem \ref{thm:main1}(a),
and recall that $\alpha_{(i)}:=\ee^{t_{(i)}}>1$.

\begin{LEM}\label{fmom} Assume Condition \ref{cond:1}. For each $i\in\mathbb{N}\cup\{\infty\}$, if $\beta_{(i)}<\alpha_{(i)}$, then
\[\sup_{n\geq 0}\mathbb{E}\left[T^{(i,i)}_{n+1}-T^{(i,i)}_{n}\right]<\infty.\]
\end{LEM}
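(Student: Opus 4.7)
The idea is to decompose $T^{(i,i)}_{n+1}-T^{(i,i)}_n$ into contributions from excursions of $X^{(i)}$ below its running $\ell^{(i)}$-maximum, and to control the expected length of each excursion by the depth of the ``trap'' it enters. The condition $\beta_{(i)}<\alpha_{(i)}$ will then be precisely what makes the resulting expectation, viewed as a sum over trap depths, a convergent geometric series.

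To set this up, I would first apply Lemma~\ref{regenlem} with $\ell=\ell^{(i)}$ to produce i.i.d.\ regeneration slabs for $X^{(0)}$ in the $\ell^{(i)}$-direction. Since $\mathcal{Z}^{(i)}\subseteq \mathcal{Z}^{(1)}$, these slabs transfer to an i.i.d.\ slab decomposition of $\mathcal{Z}^{(i)}$ in direction $\ell^{(i)}$, so that modulo finitely many boundary effects it suffices to bound the expected time for $X^{(i)}$ to cross one generic slab, uniformly in the slab. For such a slab, let $D$ denote the maximum amount by which $X^{(0)}$ dips in the $\ell^{(i)}$-direction below the top of the slab. By the strong Markov property applied at the first time $X^{(0)}$ reaches each given $\ell^{(i)}$-level, combined with Lemma~\ref{backtrackprob}, one obtains $\mathbb{P}(D\geq h)\leq\alpha_{(i)}^{-h}$.

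Next, conditioning on $D=h$, I would use a Green-function/commute-time estimate on the finite subgraph of $\mathcal{Z}^{(i)}$ obtained by restricting to the slab and attaching a designated exit vertex at its top. The conductance formula \eqref{conductances} gives an exponential scaling $c^{(i)}(x,y)\asymp \beta_{(i)}^{(x\vee y)\cdot\ell^{(i)}}$: the total conductance on the slab is dominated by the top $\ell^{(i)}$-level, while the relevant effective resistance from the entry vertex $X^{(i)}_{T^{(i,i)}_n}$ to the exit picks up, in the worst case, a factor $\beta_{(i)}^h$ from the fact that the walk may be forced to route around a portion of the slab whose conductances are as small as $\beta_{(i)}^{n-h}$. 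Combined with an exponential-moment estimate on the number of vertices in a $\bs{p}^{(0)}$-regeneration slab (which follows from Lemma~\ref{regenlem}), this yields $\mathbb{E}[\text{slab time}\mid D=h]\leq C\beta_{(i)}^{h}$. Summing,
\[\mathbb{E}\left[T^{(i,i)}_{n+1}-T^{(i,i)}_n\right]\leq C\sum_{h\geq 0}\beta_{(i)}^{h}\,\mathbb{P}(D\geq h)\leq C\sum_{h\geq 0}\left(\beta_{(i)}/\alpha_{(i)}\right)^{h}<\infty,\]
with the bound uniform in $n$. The case $i=\infty$ proceeds identically, with $\bs{p}^{(\infty)}$-conductances and $\alpha_{(\infty)},\beta_{(\infty)}$ in place of their $i$-analogues.

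The main obstacle I expect is the slab-crossing bound $\mathbb{E}[\text{slab time}\mid D=h]\leq C\beta_{(i)}^{h}$. Unlike the Galton--Watson tree or percolation cases, where the trap geometry is explicit, the slab here is an arbitrary connected subgraph of $\mathbb{Z}^d$ coming from the thinned trace of $X^{(0)}$; a careful effective-resistance argument is needed to show that the depth $D$ really is the governing scale, rather than some larger combination (e.g.\ length of a specific detour) that might be incompatible with the tail $\alpha_{(i)}^{-h}$ on $D$. Ensuring that the constant $C$ in the slab-time bound does not depend on the random slab (only on the conductance parameters) is where the regeneration-based exponential moments for slab size are crucial.
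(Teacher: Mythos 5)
Your overall architecture mirrors the paper's: bound the expected crossing time of a level slab via the Green's-function inequality $E_x[T] \leq R(\text{entry},\text{exit}) \cdot \sum c^{(i)}(y)$, control the effective-resistance factor by the depth to which $X^{(0)}$ backtracks (via Lemma~\ref{backtrackprob}), control the conductance sum by the slab size (via exponential moments for regeneration-type crossing times), and conclude that $\sum_h \beta_{(i)}^h\alpha_{(i)}^{-h}<\infty$ precisely when $\beta_{(i)}<\alpha_{(i)}$. So the route is essentially the paper's, not a genuinely different one. (The one cosmetic difference is that the paper works with unit-thickness level sets $\mathcal{Z}^{(i)}_n$ and proves bounds uniform in $n$, rather than with $\ell^{(i)}$-regeneration slabs; both are viable.)

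The gap is the step you yourself flag: the claim $\mathbb{E}[\text{slab time}\mid D=h]\leq C\beta_{(i)}^h$ with a uniform constant $C$. As stated this does not follow from the ingredients you list, because the effective-resistance factor (governed by $D$) and the total-conductance factor (governed by the number and positions of vertices $X^{(0)}$ visits near the slab) are not independent, and conditioning on $\{D=h\}$ distorts the law of the slab size in a way the $\mathbb{P}(D\geq h)\leq\alpha_{(i)}^{-h}$ tail bound does not control. The paper's proof resolves exactly this correlation: it decomposes the product into $\Upsilon(\Gamma_1+\Gamma_2)$, where $\Upsilon$ is the normalized resistance, $\Gamma_1$ is the conductance sum from before the first hit of level $n$ (hence \emph{independent} of $\Upsilon$), and $\Gamma_2$ is the truncated conductance sum from the excursion itself (hence correlated with $\Upsilon$). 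For the correlated part it applies H\"{o}lder's inequality, $\mathbb{E}[\Upsilon\Gamma_2]\leq\|\Upsilon\|_p\|\Gamma_2\|_q$, and the whole thing works because Lemma~\ref{backtrackprob} yields $\mathbb{P}(\Upsilon\geq\lambda)\lesssim\lambda^{-\log\alpha_{(i)}/\log\beta_{(i)}}$, whose exponent is \emph{strictly greater than} $1$ exactly when $\beta_{(i)}<\alpha_{(i)}$, so that $\|\Upsilon\|_p<\infty$ for some $p>1$; and $\Gamma_2$ is dominated by the slab-crossing time $\bar T^{(0,i)}_n-T^{(0,i)}_n$, which has all moments by \cite[Theorem~II]{Doney} and a geometric-number-of-excursions argument. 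To repair your write-up you should replace the conditioning on $D$ by this kind of independence/H\"{o}lder decomposition; otherwise the uniformity of the constant $C$ over the conditional slab law is asserted but not established.
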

\begin{proof} For $n\in \mathbb{R}$, define $\mathcal{Z}^{(i)}_n:=\{x\in\mathcal{Z}^{(i)}: x\cdot \ell^{(i)}\in[n,n+1)\}$. We then have that
\[\mathbb{E}\left[T^{(i,i)}_{n+1}-T^{(i,i)}_{n}\right]\leq \mathbb{E}\left[\max_{x\in\mathcal{Z}^{(i)}_n}E^{(i)}_x\left[T^{(i,i)}_{n+1}\right]\right],\]
where $E_x^{(i)}$ denotes expectation with respect to the law $P_x^{(i)}$ of a $\bs{p}^{(i)}$ random walk on $\mc{Z}^{(i)}$ starting from $x\in \mc{Z}^{(i)}$.
Now, setting
\[T^{(i,j)}_{-m,n}:=\inf\left\{k:\:X^{(i)}_k\cdot \ell^{(j)}\not\in(-m,n)\right\},\]
the monotone convergence theorem yields that, for $\mathbb{P}$-a.e.\ realisation of $\mathcal{Z}^{(i)}$, and every $x$ in the finite set $\mathcal{Z}^{(i)}_n$, we have that
\[E^{(i)}_x\left[T^{(i,i)}_{n+1}\right]=\lim_{m\rightarrow\infty}E^{(i)}_x\left[T^{(i,i)}_{-m,n+1}\right].\]
Moreover, applying well-known estimates for random walks on electrical networks (e.g.\ characterisations of the Green's function in terms of effective resistance from \cite[Theorem 1.31 and Proposition 2.55]{Barlow}), we have that
\[E^{(i)}_x\left[T^{(i,i)}_{-m,n+1}\right]\leq R^{(i)}(x,\mathcal{Z}^{(i)}_{n+1})\sum_{l=-m+1}^{n}\sum_{y\in\mathcal{Z}^{(i)}_l}c^{(i)}(y),\]
where $R^{(i)}(x,\mathcal{Z}^{(i)}_{n+1})$ is the effective resistance from $x$ to $\mathcal{Z}^{(i)}_{n+1}$ in $\mathcal{Z}^{(i)}$ (equipped with the conductances $c^{(i)}(\cdot,\cdot)$). Hence (with $\bar{c}$ as in the proof of Lemma \ref{lem:uberprob}) we obtain that
\begin{align}
\lefteqn{\mathbb{E}\left[T^{(i,i)}_{n+1}-T^{(i,i)}_{n}\right]}\nonumber\\
&\leq
\mathbb{E}\left[\max_{x\in\mathcal{Z}^{(i)}_n}R^{(i)}(x,\mathcal{Z}^{(i)}_{n+1})\sum_{m\leq n}\sum_{y\in\mathcal{Z}^{(i)}_m}c^{(i)}(y)\right]\nonumber\\
&\leq \mathbb{E}\left[\sum_{l=T^{(0,i)}_{n}}^{\bar{T}^{(0,i)}_{n}}r^{(i)}(X^{(0)}_l,X^{(0)}_{l+1})\sum_{m\leq \bar{T}^{(0,i)}_{n}}\bar{c}^{(i)}(X^{(0)}_m)\mathbf{1}_{\{X^{(0)}_m\cdot\ell^{(i)}<n+1\}}\right],\label{expectation}
\end{align}
where $\bar{T}^{(0,i)}_{n}$ is the time of the last visit by $X^{(0)}$ to the set $\mathcal{Z}^{(i)}_{X^{(0)}_{T_n^{(0,i)}}\cdot\ell^{(i)}}$. We will estimate this expectation by decomposing into various pieces. Firstly, define
\[a_{(i),n}:=\beta_{(i)}^{X_{T^{(0,i)}_{n}}^{(0)}\cdot\ell^{(i)}},\]
and then set
\[\Upsilon:=a_{(i),n}\sum_{m={T}^{(0,i)}_{n}}^{\bar{T}^{(0,i)}_{n}}r^{(i)}(X^{(0)}_m,X^{(0)}_{m+1}),\]
\[\Gamma_1:=a_{(i),n}^{-1}\sum_{m\leq T^{(0,i)}_{n}-1}\bar{c}^{(i)}(X^{(0)}_m),\]
and
\[\Gamma_2:=a_{(i),n}^{-1}\sum_{m=T^{(0,i)}_{n}}^{\bar{T}^{(0,i)}_{n}}\bar{c}^{(i)}(X^{(0)}_m)\mathbf{1}_{\{X^{(0)}_m\cdot\ell^{(i)}<n+1\}}.\]
Note that $\Upsilon$ only depends on the path $(X^{(0)}_{T^{(0,i)}_{n}+m}-X^{(0)}_{T^{(0,i)}_{n}})_{m\geq 0}$, and so is independent of $\Gamma_1$.  Letting $\|X\|_p:=\E[|X|^p]^{1/p}$, we see that for any $p,q>1$ such that $p^{-1}+q^{-1}=1$, the expectation at \eqref{expectation} is bounded above by
\[\mathbb{E}\left[\Upsilon\left(\Gamma_1+\Gamma_2\right)\right]=\mathbb{E}\left[\Upsilon\Gamma_1\right] +\mathbb{E}\left[\Upsilon\Gamma_2\right]\le
\mathbb{E}\left[\Upsilon\right]\mathbb{E}\left[\Gamma_1\right]+
\left\|\Upsilon\right\|_p
\left\|\Gamma_2\right\|_q.\]
To bound the terms involving $\Upsilon$, observe that, for some $C>1$,
\begin{eqnarray*}
\lefteqn{\sup_{n\geq 0}\mathbb{P}\left(\max_{m\in\{T^{(0,i)}_{n},\dots,\bar{T}^{(0,i)}_{n}\}}a_{(i),n}r^{(i)}(X^{(0)}_m,X^{(0)}_{m+1})\geq \lambda\right)}\\
&\leq&
\sup_{n\geq 0}\mathbb{P}\left(\max_{m\in\{T^{(0,i)}_{n},\dots,\bar{T}^{(0,i)}_{n}\}}C\beta_{(i)}^{\left(X_{T^{(0,i)}_{n}}^{(0)}-X_m^{(0)}\right)\cdot\ell^{(i)}}\geq \lambda\right).
\end{eqnarray*}
Taking logs and using the Markov property and Lemma \ref{backtrackprob}, for all $\lambda>C$ this is at most
\begin{eqnarray*}
\mathbb{P}\left(-\min_{m\geq 0}X_m^{(0)}\cdot\ell^{(i)}\geq \frac{\log(\lambda/C)}{\log \beta_{(i)}}\right)\le  \ee^{-\frac{t_{(i)}\log(\lambda/C)}{\log \beta_{(i)}}}\le c\lambda^{-\frac{\log\alpha_{(i)}}{\log \beta_{(i)}}}.
\end{eqnarray*}
In particular, for $p$ sufficiently close to 1,
\[\sup_{n\geq 0}\left\|\max_{m\in\{T^{(0,i)}_{n},\dots,\bar{T}^{(0,i)}_{n}\}}a_{(i),n}r^{(i)}(X^{(0)}_m,X^{(0)}_{m+1})\right\|_{p}<\infty.\]
 Since
\[\left\|\Upsilon\right\|_p\leq \left\|\max_{m\in\{T^{(0,i)}_{n},\dots,\bar{T}^{(0,i)}_{n}\}}a_{(i),n}r^{(i)}(X^{(0)}_m,X^{(0)}_{m+1})\right\|_{p^2}\left\|\bar{T}_{n}^{(0,i)}+1-T_{n}^{(0,i)}\right\|_{pq},\]
to complete the proof that $\sup_{n\geq 0}\|\Upsilon\|_p<\infty$, it will thus be sufficient to check that
\begin{equation}\label{fff}
\sup_{n\geq 0}\left\|\bar{T}_{n}^{(0,i)}-T_{n}^{(0,i)}\right\|_q<\infty
\end{equation}
for arbitrary $q\geq 1$, as we will do below. We continue for the moment, however, by dealing with $\Gamma_1$, which satisfies
\begin{eqnarray*}
\Gamma_1&\leq&C\sum_{m\leq T^{(0,i)}_{n}-1}\beta_{(i)}^{\left(X^{(0)}_m-X_{T^{(0,i)}_{n}}^{(0)}\right)\cdot\ell^{(i)}}\\
&\leq &C\sum_{m\leq n}\beta_{(i)}^{m+1-n}\#\left\{l\geq 0:\:X^{(0)}_l\in\mathcal{Z}_m^{(i)}\right\}.
\end{eqnarray*}
Thus, taking expectations, we find that
\begin{equation}\label{ggg}
\sup_{n\geq 0}\E[\Gamma_1]\leq C\sup_{n\in\mathbb{Z}}\E\left[\#\left\{l\geq 0:\:X^{(0)}_l\in\mathcal{Z}_n^{(i)}\right\}\right]\leq \frac{C\sup_{n\geq 0}\E\left[\tilde{T}^{(0,i)}_{n}-T^{(0,i)}_{n}\right]}{\mathbb{P}\left(\min_{m\geq 0}X_m^{(0)}\cdot\ell^{(i)}\geq 0\right)},
\end{equation}
where $\tilde{T}^{(0,i)}_{n}$ is the first time $m$ after $T_n^{(0,i)}$ such that $(X_m^{(0)}-X_{T_n^{(0,i)}}^{(0)})\cdot\ell^{(i)}\geq 1$, and we have obtained the expression in the right-hand side by considering the number of returns by $X^{(0)}$ from $\mathcal{Z}_{n+1}^{(i)}$ to $\mathcal{Z}_{n}^{(i)}$, which is stochastically dominated by a geometric
$\mathbb{P}(\min_{m\geq 0}X_m^{(0)}\cdot\ell^{(i)}\geq 0)$ random variable, and noting that the duration of each excursion back into $\mathcal{Z}_n^{(i)}$ is stochastically dominated by $\tilde{T}^{(0,i)}_{n}-T^{(0,i)}_{n}$. Now, \cite[Theorem II]{Doney} implies that
\begin{equation}\label{ttail}
\sup_{n\geq 0}\mathbb{P}\left(\tilde{T}_{n}^{(0,i)}-T_{n}^{(0,i)}\geq \lambda\right)\leq C\ee^{-c\lambda},
\end{equation}
which in turn yields the numerator on the right-hand side of (\ref{ggg}) is finite. Moreover, we also have that $\mathbb{P}(\min_{m\geq 0}X_m^{(0)}\cdot\ell^{(i)}\geq 0)>0$, and so we have established the desired bound for $\Gamma_1$. Finally, the definition of $\Gamma_2$ implies
\[\Gamma_2\leq C\sum_{m=T^{(0,i)}_{n}}^{\bar{T}^{(0,i)}_{n}}\beta_{(i)}^{\left(X^{(0)}_m-X_{T^{(0,i)}_{n}}^{(0)}\right)\cdot\ell^{(i)}}\mathbf{1}_{\{X^{(0)}_m\cdot\ell^{(i)}<n+1\}}\leq C\left(\bar{T}^{(0,i)}_{n}-T^{(0,i)}_{n}\right).\]
Now, arguing as in the previous paragraph, we have that $\bar{T}^{(0,i)}_{n}-T^{(0,i)}_{n}$ is stochastically dominated by $\sum_{i=1}^G\xi_i$, where $G$ is a geometric
$\mathbb{P}(\min_{m\geq 0}X_m^{(0)}\cdot\ell^{(i)}\geq 0)$ random variable, and $(\xi_i)_{i\geq 1}$ are independent copies of $\tilde{T}^{(0,i)}_{n}-T^{(0,i)}_{n}$, independent of $G$. We thus obtain, by a simple adaptation of the argument leading to Wald's identity, that, for integers $q\in\mathbb{N}$,
\[\sup_{n\geq 0}\mathbb{E}[\Gamma_2^q]\leq C \mathbb{E}[G^q]\mathbb{E}[\xi^q],\]
and hence we obtain from the finiteness of the moments of the geometric distribution and \eqref{ttail} that $\sup_{n\geq 0}\|\Gamma_2\|_q<\infty$ for arbitrary $q\geq 1$. Since the latter part of the argument also implies (\ref{fff}), the proof is complete.
\end{proof}

\begin{proof}[{\bf \em Proof of Theorem \ref{thm:main2}(a)}]
From Theorem \ref{thm:main1} and the definition of $T_n^{(i,i)}$, we have
\[\frac{X^{(i)}_{T_n^{(i,i)}}\cdot \ell^{(i)}}{T_n^{(i,i)}}\to v:=v^{(i)}\cdot \ell^{(i)},\]
and
\[\frac{X^{(i)}_{T_n^{(i,i)}}\cdot \ell^{(i)}}{n}\to 1\]
almost surely.  If $v=0$, then from the above we have that $n^{-1}T_n^{(i,i)} \to \infty$ almost surely, so for any $M$,
\[\P(n^{-1}T_n^{(i,i)}>M) \to 1.\]
as $n\rightarrow\infty$. However, by Lemma \ref{fmom} there exists $C>0$ such that $\E[T_n^{(i,i)}]\le Cn$ for every $n$.  Together with Markov's inequality, this gives
\[ \P( n^{-1}T_n^{(i,i)}>M)\le \frac{\E[n^{-1}T_n^{(i,i)}]}{M}\le \frac{C}{M}.\]
For $M>2C$ this is less than $1/2$, and hence we conclude that $v\ne 0$.
\end{proof}

\section{Sub-ballistic phase}\label{subsec}

In this section, we describe conditions under which the walk $X^{(i)}$ is sub-ballistic, namely we prove Theorem \ref{thm:main2}(b). Throughout we work under the assumption that Condition \ref{cond:1} holds. To establish the main result, we consider a regeneration structure for $X^{(0)}$ in the direction $\ell^{(i)}$ for each $i$, as defined above Lemma \ref{regenlem}. Moreover, we need to define what it means to have a trap of height $h$ for the walk $X^{(i)}$ in the $j$th regeneration block for $X^{(0)}$ with respect to the direction $\ell^{(i)}$. In particular, let $\mathcal{E}^{(i)}_j(h)$ be the event that there exist $m,n$ with $\mathcal{T}^{\ell^{(i)}}_j\leq m\leq n\leq \mathcal{T}^{\ell^{(i)}}_{j+1}$ such that $X_m^{(0)}$ and $X_n^{(0)}$ are cut-points for $X^{(0)}$ (i.e.\ for $n'\in\{m,n\}$ we have that $\{X_{m'}^{(0)}:\:m'\leq n'\}\cap\{X_{m'}^{(0)}:\:m'> n'\}=\emptyset$) and
\begin{equation}\label{inc1}
\left\lfloor \left(X_m^{(0)}-X_{n}^{(0)}\right)\cdot \ell^{(i)}\right\rfloor = h.
\end{equation}
The key ingredient of the argument to demonstrate sub-ballisticity is the following, which reveals that the probability of creating a trap in a regeneration block decays at an exponential rate no greater than that of the backtracking probability for $X^{(0)}\cdot\ell^{(i)}$ (recall Lemma \ref{backtrackprob}). (NB.\ Establishing the reverse inequality is much easier, but we do not need it in what follows.)

\begin{LEM}\label{trapprob} Assume Condition \ref{cond:1}. For each $i\in\mathbb{N}\cup\{\infty\}$,
\[\limsup_{h\rightarrow\infty}\frac{-\log\mathbb{P}\left(\mathcal{E}^{(i)}_j(h)\right)}{h}\leq t_{(i)},\]
where $t_{(i)}$ was defined below \eqref{varphidef}. (NB. By the fact regeneration blocks are identically distributed, the left-hand side is independent of $j\geq 1$.)
\end{LEM}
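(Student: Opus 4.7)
The plan is to construct, within a single regeneration block for $X^{(0)}$ in direction $\ell^{(i)}$, an explicit event that forces a trap of exact height $h$ and has probability at least $c(\vep)e^{-(t_{(i)}+\vep)h}$ for every $\vep>0$; the conclusion then follows on taking $\vep\downarrow 0$. By Lemma \ref{regenlem}(ii) the blocks are i.i.d., so the left hand side does not depend on $j\ge 1$, and it suffices to work with the first block, which we further condition to start at the origin with $0$ a regeneration point for $X^{(0)}$ in direction $\ell^{(i)}$ (a positive-probability event).

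The quantitative backbone is a local Cram\'er-type lower bound for backtracking of $X^{(0)}$ in direction $\ell^{(i)}$ matching the Lundberg upper bound in Lemma \ref{backtrackprob}. Introduce the tilted step law $\tilde p(y)\propto p^{(0)}(y)\,e^{-t_{(i)} y\cdot\ell^{(i)}}$, which is a probability distribution because $\varphi_{(i)}(t_{(i)})=1$; strict convexity of $\varphi_{(i)}$ together with $\varphi_{(i)}'(0)=-\delta^{(0)}\cdot\ell^{(i)}<0$ forces $\mu:=\varphi_{(i)}'(t_{(i)})>0$, so under $\tilde p$ the walk has drift $-\mu$ in direction $\ell^{(i)}$. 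Setting $K:=\lceil h/\mu\rceil$ and applying a local limit theorem under the tilt, then reversing the tilt, yields
\[
\mathbb{P}\left(\left\lfloor -\sum_{k=1}^{K}\xi_k\cdot\ell^{(i)}\right\rfloor=h\right)\ \ge\ c\,h^{-1/2}\,e^{-t_{(i)}h},
\]
where $(\xi_k)_{k\ge 1}$ are i.i.d.\ copies of $X^{(0)}_1$.

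To turn this into a trap, I would define $A_h$ as follows. Let $m$ be the first hitting time of the level set $\{x:x\cdot\ell^{(i)}\ge 2h\}$ by $X^{(0)}$; then $X^{(0)}_m$ is automatically first-visited at time $m$. On $A_h$, we further require that (III) starting from $X^{(0)}_{m+1}$ the next $K$ steps realize the local event above so that at $n:=m+1+K$ one has $\lfloor(X^{(0)}_m-X^{(0)}_n)\cdot\ell^{(i)}\rfloor=h$, and (IV) after their respective visit times the walk never returns to $X^{(0)}_m$ or to $X^{(0)}_n$, while its $\ell^{(i)}$-coordinate stays strictly positive. By the strong Markov property at the stopping time $m$, event (III) inherits the local Cram\'er bound displayed above. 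For (IV), the probability of never revisiting $X^{(0)}_n$ after time $n$ is bounded below by the translation-invariant escape constant $p_{\mathrm{esc}}>0$; the probability that the post-$n$ walk ever hits the single distant point $X^{(0)}_m$ (at distance of order $h$) is only $O(h^{-(d-1)/2})$ by Ney--Spitzer asymptotics of the Green's function for a ballistic random walk; and the same type of argument, together with the tilted-drift away from $X^{(0)}_m$ during the backtracking, bounds the probability that the backtracking trajectory itself revisits $X^{(0)}_m$; finally, maintaining the $\ell^{(i)}$-coordinate above $0$ has uniform positive probability by directional transience. Collecting the factors, $\mathbb{P}(A_h)\ge c\,h^{-1/2}\,e^{-(t_{(i)}+\vep)h}$, and on $A_h$ both $X^{(0)}_m$ and $X^{(0)}_n$ are cut-points lying in the same regeneration block, so $A_h\subset \mathcal{E}^{(i)}_1(h)$.

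The main obstacle is coordinating the three ingredients -- local large deviation with the floor (unit-window) refinement, cut-pointness of both endpoints, and the same-block constraint -- while preserving the exponential rate $t_{(i)}$. None of these refinements is exponentially costly on its own: the floor condition follows from a local CLT under the (uniformly elliptic) tilted walk, cut-pointness rests on the Markov escape constant and polynomial decay of hitting probabilities for single points, and the same-block constraint is handled by the initial conditioning on $0$ being a regeneration and by directional transience afterwards. The care is in verifying that these conditions can be imposed jointly without introducing spurious exponential costs -- in particular that one does not need to spatially confine the backtracking trajectory (which would be exponentially expensive), so that the single-point polynomial Ney--Spitzer bound suffices.
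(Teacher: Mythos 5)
Your overall strategy --- constructing an explicit backtracking event inside a regeneration block, controlled by an exponential Cram\'er/Lundberg tilt at $t_{(i)}$, with the backtracking direction given by the tilted drift $\hat{\delta}^{(i)}$ --- matches the paper's and is the right idea. However, there is a genuine gap at the step where you conclude cut-pointness, and a mistaken premise that produces it.

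Your event (IV) asks only that the walk never revisit the two individual lattice sites $X^{(0)}_m$ and $X^{(0)}_n$ after their first visits. The definition of $\mathcal{E}^{(i)}_j(h)$ requires something strictly stronger: that the entire trajectory up to time $m$ (resp.\ $n$) be disjoint from the entire trajectory after time $m$ (resp.\ $n$). Avoidance of two isolated points gives no control over, say, intersections between the backtracking excursion $(X^{(0)}_k)_{m<k\leq n}$ and the forward stretch $(X^{(0)}_k)_{k<m}$: both pieces pass through every $\ell^{(i)}$-level between $h$ and $2h$, both emanate from a small neighbourhood of $X^{(0)}_m$, and both have transverse fluctuations of order $\sqrt{h}$, so without geometric control there is no reason for them to be disjoint. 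Ney--Spitzer decay of single-point hitting probabilities is not the relevant estimate here. Compounding this, your premise that spatially confining the backtracking trajectory ``would be exponentially expensive'' is incorrect, and is exactly what led you to discard the control you need. A tube of radius $O(1)$ would indeed be exponentially costly, but a tube of radius $\varepsilon h$ around the line through $\hat{\delta}^{(i)}$ costs only $e^{-(t_{(i)}+o_\varepsilon(1))h}$ (as $h\to\infty$, then $\varepsilon\downarrow 0$), by Mogulskii's sample-path large deviation theorem --- the pathwise upgrade of your endpoint Cram\'er estimate. This is precisely the paper's device: three disjoint cylinders of radius $\varepsilon h$, going out along $\delta^{(0)}$, back along $\hat{\delta}^{(i)}$, and out again along $\delta^{(0)}$, with the two cut-points placed at bottleneck sites between consecutive cylinders, so that disjointness (hence cut-pointness) is built in, and the only exponentially costly leg is the middle one, whose rate is computed to be $\Lambda(\hat{\delta}^{(i)})/(-\hat{\delta}^{(i)}\cdot\ell^{(i)})=t_{(i)}$.

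A secondary issue is the claimed inclusion $A_h\subset\mathcal{E}^{(i)}_1(h)$: conditioning on a regeneration at the origin and then letting the walk climb to $\ell^{(i)}$-level $2h$ before it starts backtracking will typically produce further regeneration times between levels $0$ and $2h$, so your cut-points land in a later block, not block $1$. This is repairable (a union bound over the $O(h)$ candidate blocks introduces only a subexponential factor), or one can mimic the paper, where the backtracking begins within $O(\varepsilon h)$ of the block's initial regeneration point and an immediate recrossing of a short outgoing path suppresses the unwanted early regenerations.
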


Before proving this result, we explain how it is used to establish Theorem \ref{thm:main2}(b). To this end, for $n\in \mathbb{N}$ and $\varepsilon\in(0,1)$, we introduce a parameter
\[h^{(i)}_{n,\varepsilon}=\frac{(1-\varepsilon)\log n}{t_{(i)}}\]
that will be used to define what it means for a trap to be big for $X^{(i)}$ at scale $n$. In particular, we set
\[N^{(i)}_{n,\varepsilon}:=\#\left\{j\in\{1,\dots,n-1\}\::\: \mathcal{E}^{(i)}_j(h^{(i)}_{n,\varepsilon})\mbox{ occurs}\right\}.\]
The following lemma tells us that this random variable grows polynomially with $n$.

\begin{LEM}\label{nlem} Assume Condition \ref{cond:1}, and let $i\in\mathbb{N}\cup\{\infty\}$. For any $\varepsilon\in(0,1)$, it holds that
\[\lim_{n\rightarrow\infty}\mathbb{P}\left(N_{n,\varepsilon}^{(i)}\geq n^{\varepsilon/2}\right)=1.\]
\end{LEM}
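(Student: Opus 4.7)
The plan is to recognise $N^{(i)}_{n,\vep}$ as a binomial random variable whose mean grows like a positive power of $n$ strictly exceeding $\vep/2$, and then use a standard second-moment estimate to obtain concentration around that mean.

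First, I would exploit the regeneration structure from Lemma \ref{regenlem}(ii): the path segments $(X^{(0)}_{\mc{T}^{\ell^{(i)}}_j + m} - X^{(0)}_{\mc{T}^{\ell^{(i)}}_j})_{m=0}^{\mc{T}^{\ell^{(i)}}_{j+1} - \mc{T}^{\ell^{(i)}}_j}$ are i.i.d.\ in $j \ge 1$. The definition of $\mc{T}^{\ell^{(i)}}_j$ forces consecutive $\ell^{(i)}$-regeneration blocks to occupy almost-disjoint ranges of $\ell^{(i)}$-coordinate (strictly below $X^{(0)}_{\mc{T}^{\ell^{(i)}}_j}\cdot\ell^{(i)}$ before $\mc{T}^{\ell^{(i)}}_j$, and at least $X^{(0)}_{\mc{T}^{\ell^{(i)}}_{j+1}}\cdot\ell^{(i)}$ after $\mc{T}^{\ell^{(i)}}_{j+1}$), so that any cut point for the walk restricted to the $j$th block is automatically a global cut point of $X^{(0)}$. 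Consequently each event $\mc{E}^{(i)}_j(h)$ is measurable with respect to the $j$th block alone, and $(\mc{E}^{(i)}_j(h^{(i)}_{n,\vep}))_{j\ge 1}$ is an i.i.d.\ family. Setting $p_n := \P(\mc{E}^{(i)}_1(h^{(i)}_{n,\vep}))$, the variable $N^{(i)}_{n,\vep}$ then has distribution $\mathrm{Binomial}(n-1,p_n)$.

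Next, I would lower-bound $p_n$ using Lemma \ref{trapprob}. Given $\eta>0$, that lemma yields $\P(\mc{E}^{(i)}_1(h)) \ge \ee^{-(t_{(i)}+\eta)h}$ for all $h$ sufficiently large. Substituting $h=h^{(i)}_{n,\vep}=(1-\vep)(\log n)/t_{(i)}$ and choosing $\eta$ small enough that $(1-\vep)(1+\eta/t_{(i)})<1-3\vep/4$ (which is possible since $\vep>0$) gives $p_n \ge n^{-(1-3\vep/4)}$ for all $n$ large, and hence $\E[N^{(i)}_{n,\vep}] \ge \tfrac{1}{2}n^{3\vep/4}$ eventually.

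Finally, I would close with Chebyshev applied to the binomial $N^{(i)}_{n,\vep}$: using $\mathrm{Var}(N^{(i)}_{n,\vep}) \le \E[N^{(i)}_{n,\vep}]$,
\[
\P\!\left(N^{(i)}_{n,\vep} \le \tfrac{1}{2}\E[N^{(i)}_{n,\vep}]\right) \le \frac{4}{\E[N^{(i)}_{n,\vep}]} \ra 0,
\]
and since $n^{\vep/2} \le \tfrac{1}{4} n^{3\vep/4}$ for $n$ large, the desired bound $\P(N^{(i)}_{n,\vep} \ge n^{\vep/2}) \ra 1$ follows. The entire scheme is routine once Lemma \ref{trapprob} is in hand; the only non-trivial bookkeeping is the block-measurability of the global cut-point condition in $\mc{E}^{(i)}_j(h)$, which is where the disjointness of the $\ell^{(i)}$-slabs associated with distinct regeneration blocks has to be invoked.
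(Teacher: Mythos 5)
Your argument is correct and follows essentially the same route as the paper: lower-bound $p_n := \P(\mc{E}^{(i)}_1(h^{(i)}_{n,\vep}))$ by $n^{-1+3\vep/4}$ via Lemma \ref{trapprob}, observe that the block events are i.i.d.\ so $N^{(i)}_{n,\vep}$ is binomial, and finish with a Chebyshev/second-moment estimate. The paper's proof is more terse (it silently absorbs the block-measurability of the global cut-point condition into the phrase ``the i.i.d.\ structure of regeneration blocks''), whereas you spell out why the global cut-point condition is determined by the $j$th block alone; that justification is correct and a worthwhile detail to include.
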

\begin{proof} Lemma \ref{trapprob} gives us that, for large $n$,
\[\mathbb{P}\left(\mathcal{E}^{(i)}_j(h^{(i)}_{n,\varepsilon})\right)\geq \alpha_{(i)}^{-\left(\frac{1-3\varepsilon/4}{1-\varepsilon}\right)h^{(i)}_{n,\varepsilon}}=n^{-1+3\varepsilon/4}.\]
Hence, applying the i.i.d.\ structure of regeneration blocks of Lemma \ref{regenlem}, and writing $\mathrm{Bin}(n,p)$ for a binomial random variable with parameters $n$ and $p$,
\begin{eqnarray*}
\mathbb{P}\left(N^{(i)}_{n,\varepsilon}<n^{\varepsilon/2}\right)&\leq& \mathbb{P}\left(\mathrm{Bin}\left(n-1,n^{-1+3\varepsilon/4}\right)< n^{\varepsilon/2}\right)\\
&\leq & \frac{\mathrm{Var}\left(\mathrm{Bin}\left(n-1,n^{-1+3\varepsilon/4}\right)\right)}{\left((n-1)n^{-1+3\varepsilon/4}-n^{\varepsilon/2}\right)^2}\\
&\leq &C n^{-3\varepsilon/4},
\end{eqnarray*}
which completes the proof.
\end{proof}

We next present a lemma which gives a lower bound for the probability of the time we spend in a trap being small. We introduce the notation $\sigma_j^{(i)}$ to represent the hitting time of $X^{(0)}_{\mathcal{T}^{\ell^{(i)}}_j}$ by $X^{(i)}$, and recall the notation $P^{(i)}_x$ from the proof of Lemma \ref{fmom}.

\begin{LEM}\label{tinc} Assume Condition \ref{cond:1}, and let $i\in\mathbb{N}\cup\{\infty\}$. There exists a deterministic constant $c$ such that, on the event $\mathcal{E}^{(i)}_j(h)$ with $h\geq 1$,
\[P^{(i)}_{X^{(0)}_{\mathcal{T}_j}}\left(\sigma_{j+1}^{(i)}\geq \beta_{(i)}^{h}\right)\geq c.\]
\end{LEM}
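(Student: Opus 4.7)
The plan is to exploit the trap structure electrically. On $\mathcal{E}^{(i)}_j(h)$, write $x_m:=X^{(0)}_m$, $x_n:=X^{(0)}_n$ and $L:=x_m\cdot\ell^{(i)}$, and let $B$ be the finite subgraph of $\mathcal{Z}^{(i)}$ consisting of the vertices $\{X^{(0)}_k:m\le k\le n\}$ together with the edges traversed by $X^{(0)}$ during $[m,n]$. The cut-point property forces $B$ to be attached to the complement of $B$ in $\mathcal{Z}^{(i)}$ only at $x_m$ and $x_n$, via two disjoint ``flaps'' (the pre-$m$ piece of $X^{(0)}$'s trajectory, attached only at $x_m$, and the post-$n$ piece, attached only at $x_n$); since $\mathcal{T}_j\le m$ and $n\le \mathcal{T}_{j+1}$, the points $X^{(0)}_{\mathcal{T}_j}$ and $X^{(0)}_{\mathcal{T}_{j+1}}$ lie in the two different flaps. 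Consequently the walker $X^{(i)}$ started at $X^{(0)}_{\mathcal{T}_j}$ must hit $x_m$ and then $x_n$ before reaching $X^{(0)}_{\mathcal{T}_{j+1}}$, so $\sigma^{(i)}_{j+1}\ge T_{x_n}^{(i)}\ge T_{x_n}^{(i)}-T_{x_m}^{(i)}$, and by the strong Markov property at $T_{x_m}^{(i)}$ it will be enough to show $P^{(i)}_{x_m}(T_{x_n}\ge \beta_{(i)}^h)\ge c$.

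The next step will be to estimate the single-excursion ``success'' probability $p:=P^{(i)}_{x_m}(T_{x_n}<T^+_{x_m})$ using the standard reversible-chain identity $p=(c^{(i)}(x_m)\,R(x_m,x_n))^{-1}$. Since any electrical current between $x_m$ and $x_n$ is trapped in $B$ (the two flaps being dead-ends from the point of view of a $x_m\to x_n$ current), we have $R(x_m,x_n)=R_B(x_m,x_n)$, the effective resistance within $B$. A direct reading of \eqref{conductances} gives $c^{(i)}(x_m)\ge c\beta_{(i)}^L$; the Nash--Williams inequality applied to the edge-cut consisting of the (at most $2d$) edges of $B$ incident to $x_n$ -- whose conductances each satisfy $c^{(i)}(x_n,y)\le C\beta_{(i)}^{L-h}$, because $x_n\cdot\ell^{(i)}\in(L-h-1,L-h]$ by \eqref{inc1} -- then yields $R_B(x_m,x_n)\ge c\beta_{(i)}^{h-L}$. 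Combining these gives $p\le C''\beta_{(i)}^{-h}$ for a deterministic constant $C''$ depending only on $\bs{p}^{(i)}$.

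A standard geometric-tail argument will then conclude the proof. By the strong Markov property, the number $N$ of visits of $X^{(i)}$ to $x_m$ before it hits $x_n$ is geometrically distributed with parameter $p$. Since distinct visits occur at distinct time steps, $T_{x_n}\ge N$, and hence, for $p\le 1/2$,
\[
P^{(i)}_{x_m}\!\left(T_{x_n}\ge\beta_{(i)}^h\right)\ge P(N\ge \beta_{(i)}^h)=(1-p)^{\lceil\beta_{(i)}^h\rceil-1}\ge\exp\!\left(-2p\beta_{(i)}^h\right)\ge \ee^{-2C''}=:c,
\]
using $-\log(1-p)\le 2p$ for $p\le 1/2$; the residual case $p>1/2$ forces $\beta_{(i)}^h$ to be bounded, and the same geometric formula still returns a bounded-below constant. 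The only real (and minor) technical obstacle is the bookkeeping of the $O(1)$ exponential factors coming from the difference between $x\cdot\ell^{(i)}$ and $(x\vee y)\cdot\ell^{(i)}$ in \eqref{conductances}, but these are immaterial to the crucial $\beta_{(i)}^{-h}$ decay of $p$.
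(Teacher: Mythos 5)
Your proof is correct and is essentially the same argument as the paper: both bound the escape probability $p$ from $x_m$ to $x_n$ via an effective-resistance estimate (your Nash--Williams cut at $x_n$ is the same bound as the paper's $R^{(i)}(x,y)\ge R^{(i)}(y,\{w:w\sim_{(i)}y\})$, and your $p=(c^{(i)}(x_m)R^{(i)}(x_m,x_n))^{-1}$ is interchangeable with the paper's $p\le r^{(i)}(x,z)/R^{(i)}(x,y)$) and then apply a geometric-number-of-excursions argument. One small point worth tidying, which the paper's own write-up also glosses over, is that to extract a uniform-in-$h$ constant from $(1-p)^{\lceil\beta_{(i)}^h\rceil-1}$ one should also record $1-p\ge \min_e p^{(i)}(e)>0$; this holds because $x_m$, being a cut-point, has at least one pre-$m$ neighbour in $\mathcal{Z}^{(i)}$ (the vertex from which $X^{(i-1)}$ first enters $x_m$), and the walk started there must return to $x_m$ before reaching $x_n$.
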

\begin{proof} Suppose $\mathcal{E}^{(i)}_j(h)$ holds, and let $x=X_m^{(0)}$ and $y=X_n^{(0)}$ be the two vertices appearing in the definition of this event. (These are not necessarily uniquely defined, but that is unimportant for the proof.) Let
\[p=p_{\mc{Z}^{(i)}}:=P^{(i)}_x\left(X^{(i)}\mbox{ hits $y$ before returning to $x$}\right),\]
and note that, with $u=X^{(0)}_{\mathcal{T}_j}$,
\[P^{(i)}_{u}\left(\sigma_{j+1}^{(i)}\geq \beta_{(i)}^{h}\right)\geq \mathbb{P}\left(\mathrm{Geo}\left(p\right)\geq \beta_{(i)}^{h}\right)=(1-p)^{\beta_{(i)}^{h}},\]
where (given $\mc{Z}^{(i)}$) $\mathrm{Geo}(p)$ is a geometric random variable taking values in $\N$ with parameter $p=p_{\mc{Z}^{(i)}}$. We thus are motivated to find an upper bound for $p$. For this, we observe from \cite[Exercise 2.62]{LP}, for example, that with $z:=X^{(0)}_{m+1}$,
\[p\leq  P^{(i)}_z\left(X^{(i)}_{0}\mbox{ hits $y$ before $x$}\right)=\frac{r^{(i)}\left(x,z\right)}{R^{(i)}\left(x,y\right)},\]
Now,
\[r^{(i)}\left(x,z\right)=c^{(i)}\left(x,z\right)^{-1}\leq C \beta_{(i)}^{-x\cdot\ell^{(i)}},\]
and
\[R^{(i)}\left(x,y\right)\geq R^{(i)}\left(y,\{w:\:w\sim_{(i)}y\}\right) \geq \frac{\min_{w:\:w\sim_{(i)}y}r^{(i)}\left(y,w\right)}{\#\{w:\:w\sim_{(i)}y\}}\geq C\beta_{(i)}^{-y\cdot\ell^{(i)}},\]
where we use the notation $w\sim_{(i)}y$ to mean that $w$ is a neighbour of $y$ in $\mathcal{Z}^{(i)}$. Combining these estimates with \eqref{inc1} thus yields
\[p\leq C \beta_{(i)}^{-(x-y)\cdot\ell^{(i)}}\leq C \beta_{(i)}^{-h}.\]
Hence we obtain that
\[P^{(i)}_{u}\left(\sigma_{j+1}^{(i)}\geq \beta_{(i)}^{h}\right)\geq \left(1-C \beta_{(i)}^{-h}\right)^{\beta_{(i)}^{h}},\]
which is clearly bounded away from $0$ as $h\rightarrow\infty$, and the result follows.
\end{proof}

We use the previous two lemmas to establish that the random walk $X^{(i)}$ takes a super-linear time to reach the $n$th regeneration level.

\begin{LEM}\label{tbig} Assume Condition \ref{cond:1}, and let $i\in\mathbb{N}\cup\{\infty\}$. If $\beta_{(i)}>\alpha_{(i)}$, then there exists a deterministic constant $\delta>0$ such that
\[\lim_{n\rightarrow\infty}\mathbb{P}\left(\sigma_{n}^{(i)}\geq n^{1+\delta}\right)=1.\]
\end{LEM}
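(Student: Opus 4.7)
The plan is to combine Lemmas \ref{nlem} and \ref{tinc} via the strong Markov property: the former supplies many trap events among the first $n$ regeneration blocks of $X^{(0)}$ in direction $\ell^{(i)}$, while the latter converts each such trap into a positive probability of a long sojourn by $X^{(i)}$. Since $\beta_{(i)}>\alpha_{(i)}$ gives $\log\beta_{(i)}/\log\alpha_{(i)}>1$, I first fix $\varepsilon\in(0,1)$ sufficiently small and $\delta>0$ so that
\[(1-\varepsilon)\frac{\log\beta_{(i)}}{\log\alpha_{(i)}}\geq 1+2\delta.\]
By definition of $h_{n,\varepsilon}^{(i)}$, this gives $\beta_{(i)}^{h_{n,\varepsilon}^{(i)}}=n^{(1-\varepsilon)\log\beta_{(i)}/\log\alpha_{(i)}}\geq n^{1+2\delta}$, so that a single trap realising the delay of Lemma \ref{tinc} already forces $\sigma_n^{(i)}\geq n^{1+\delta}$ for large $n$.

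Next I would introduce $A_n:=\{N_{n,\varepsilon}^{(i)}\geq n^{\varepsilon/2}\}$, the $\sigma(X^{(0)})$-measurable index set $J_n:=\{1\leq j\leq n-1:\mathcal{E}_j^{(i)}(h_{n,\varepsilon}^{(i)})\text{ holds}\}$, and for each $j$ the delay event $D_j:=\{\sigma_{j+1}^{(i)}-\sigma_j^{(i)}\geq \beta_{(i)}^{h_{n,\varepsilon}^{(i)}}\}$. Lemma \ref{nlem} supplies $\P(A_n)\to 1$. The key structural observation is that each regeneration point $X^{(0)}_{\mathcal{T}_j^{\ell^{(i)}}}$ is by definition a cut point of $\mathcal{Z}^{(0)}$, and hence remains a cut point of the subgraph $\mathcal{Z}^{(i)}$; combined with the transience of $X^{(i)}$ in direction $\ell^{(i)}$ from Theorem \ref{thm:main1}(a), this forces $X^{(i)}$ to visit the regeneration points in order, so that the times $(\sigma_j^{(i)})_{j\geq 1}$ are $\P$-a.s.\ finite and strictly increasing, with $X^{(i)}_{\sigma_j^{(i)}}=X^{(0)}_{\mathcal{T}_j^{\ell^{(i)}}}$.

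Applying the strong Markov property of $X^{(i)}$ at $\sigma_j^{(i)}$ together with Lemma \ref{tinc} then yields $\P(D_j\mid\mathcal{F}_{\sigma_j^{(i)}})\geq c$ on $\mathcal{E}_j^{(i)}(h_{n,\varepsilon}^{(i)})$, where $\mathcal{F}$ is the natural filtration of $X^{(i)}$ given $X^{(0)},\ldots,X^{(i-1)}$. Iterating this at the ordered times $\sigma_{j_1}^{(i)}<\sigma_{j_2}^{(i)}<\cdots$ associated to the elements $j_1<j_2<\cdots$ of $J_n$ (using $D_{j_k}\in\mathcal{F}_{\sigma_{j_k+1}^{(i)}}\subset\mathcal{F}_{\sigma_{j_{k+1}}^{(i)}}$ since $j_{k+1}>j_k$) will give
\[\P\left(\bigcap_{j\in J_n}D_j^c\,\Big|\,X^{(0)},\ldots,X^{(i-1)}\right)\leq (1-c)^{|J_n|},\]
so that $\P(A_n\cap\bigcap_{j\in J_n}D_j^c)\leq (1-c)^{n^{\varepsilon/2}}\to 0$ and hence $\P(A_n\cap\bigcup_{j\in J_n}D_j)\to 1$. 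On the latter event, any $j^*\in J_n$ with $D_{j^*}$ occurring yields $\sigma_n^{(i)}\geq \sigma_{j^*+1}^{(i)}\geq \beta_{(i)}^{h_{n,\varepsilon}^{(i)}}\geq n^{1+\delta}$ for large $n$, as required. The subtlest point will be rigorously justifying the cut-point/visit-in-order property and the resulting conditional independence of the $(D_j)_{j\in J_n}$; once these are in place, the rest is arithmetic on the preceding two lemmas.
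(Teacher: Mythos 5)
Your proposal is correct and follows essentially the same route as the paper: both condition on $A_n=\{N^{(i)}_{n,\varepsilon}\geq n^{\varepsilon/2}\}$ from Lemma \ref{nlem}, and use the strong Markov property at the hitting times $\sigma_j^{(i)}$ together with Lemma \ref{tinc} to realise the trap delays. The only cosmetic difference is that the paper chooses $\varepsilon$ so that $\beta_{(i)}^{h^{(i)}_{n,\varepsilon}}>n$ and sums the delays, bounding $\sigma_n^{(i)}$ stochastically from below by $\beta_{(i)}^{h^{(i)}_{n,\varepsilon}}\mathrm{Bin}(n^{\varepsilon/2},c)$ and using the binomial concentration, whereas you choose $\varepsilon$ smaller so that a single delay already exceeds $n^{1+\delta}$ and then only need one $D_j$ to fire; your discussion of the cut-point structure forcing $X^{(i)}$ to visit the $\ell^{(i)}$-regeneration points of $X^{(0)}$ in order is a useful explicit justification of the stochastic domination the paper asserts more briefly (note a small slip: those regeneration points are cut-points of $\mathcal{Z}^{(1)}$, the trace of $X^{(0)}$, not of $\mathcal{Z}^{(0)}=\mathbb{Z}^d$).
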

\begin{proof} On the event that $N^{(i)}_{n,\varepsilon}\geq n^{\varepsilon/2}$, Lemma \ref{tinc} implies that $\sigma_{n}^{(i)}$ is stochastically bounded below by a random variable of the form $\beta_{(i)}^{h^{(i)}_{n,\varepsilon}}\mathrm{Bin}(n^{\varepsilon/2},c)$. Note that the multiplying constant here is given by
\[\beta_{(i)}^{h^{(i)}_{n,\varepsilon}}=n^{\frac{(1-\varepsilon)\log \beta_{(i)}}{t_{(i)}}},\]
which, by our assumption that $\beta_{(i)}>\alpha_{(i)}=\ee^{t_{(i)}}$, can be made greater than $n$ by choosing $\varepsilon>0$ suitably small. Hence for all such $\vep$ we have that
\begin{align}
\mathbb{P}\left(\sigma_{n}^{(i)}\geq n^{1+\delta}\right)&\geq \mathbb{P}\left(\sigma_{n}^{(i)}\geq n^{1+\delta}\big| N^{(i)}_{n,\varepsilon}\geq n^{\varepsilon/2} \right)\mathbb{P}\left(N^{(i)}_{n,\varepsilon}\geq n^{\varepsilon/2}\right)\nn\\
&\ge \mathbb{P}\left(\mathrm{Bin}(n^{\varepsilon/2},c)\geq n^{\delta}\right)\mathbb{P}\left(N^{(i)}_{n,\varepsilon}\geq n^{\varepsilon/2}\right).\label{banana1}
\end{align}
Taking $\delta<\varepsilon/2$ and applying Lemma \ref{nlem}, we see that \eqref{banana1} converges to $1$ as $n\rightarrow\infty$, which establishes the desired conclusion.
\end{proof}

From the preceding result, the proof of Theorem \ref{thm:main2}(b) is straightforward.

\begin{proof}[{\bf \em Proof of Theorem \ref{thm:main2}(b)}] By Lemma \ref{tbig}, we have that
\[\lim_{n\rightarrow\infty}\mathbb{P}\left(X_{n^{1+\delta}}^{(i)}\cdot \ell^{(i)}\leq X^{(0)}_{\mathcal{T}^{\ell^{(i)}}_n}\cdot  \ell^{(i)}\right)=1.\]
However, we also know from Lemma \ref{regenlem} that there exists finite constant $C$ such that
\[\lim_{n\rightarrow\infty}\mathbb{P}\left(X^{(0)}_{\mathcal{T}^{\ell^{(i)}}_n}\cdot  \ell^{(i)}\leq Cn\right)=1.\]
Hence we conclude that
\[\lim_{n\rightarrow\infty}\mathbb{P}\left(X_{n^{1+\delta}}^{(i)}\cdot  \ell^{(i)}\leq Cn\right)=1,\]
which yields
\[n^{-1}X_{n}^{(i)}\cdot  \ell^{(i)}\rightarrow 0\]
in probability. Since we also know the almost-sure limit of the left-hand side exists (by Theorem \ref{thm:main1}), we in fact have that the above convergence holds almost-surely. Moreover, recalling Condition \ref{cond:1}(c) and the fact that $v^{(i)}$ must be a scalar multiple of $\delta^{(0)}$ (i.e.\ Theorem \ref{thm:main1}(b)), we conclude that $v^{(i)}=0$.
\end{proof}

To complete the section, we need to prove Lemma \ref{trapprob}. Towards deducing an appropriate lower bound for the probability of $\mathcal{E}_j^{(i)}(h)$, we will present a collection of events that together ensure the existence of a trap, and for which we can suitably estimate the probability of their intersection. At the heart of the proof is the application of a large deviations result, the usefulness of which depends on the identification of the direction in which $X^{(0)}$ is most likely to drift, conditional on the projection $X^{(0)}\cdot\ell^{(i)}$ backtracking. In particular, the latter direction transpires to be described by
\[\hat{\delta}^{(i)}:=\mathbb{E}\left[\ee^{-t_{(i)}X_1^{(0)}\cdot\ell^{(i)}}X_1^{(0)}\right].\]
(Note that, from the properties of $\varphi_{(i)}$ set out in the proof of Lemma \ref{backtrackprob}, it holds that
\[\hat{\delta}^{(i)}\cdot\ell^{(i)}=-\varphi_{(i)}'(t_{(i)})<0,\]
and so if $X^{(0)}$ follows the direction $\hat{\delta}^{(i)}$, then $X^{(0)}\cdot\ell^{(i)}$ is indeed decreasing.)
\begin{REM}
Although we will not need this fact in the proof of Lemma \ref{trapprob}, we observe that $\hat{\delta}^{(i)}$ is the drift of the Markov process with transition probabilities given by the Doob transform
\[\hat{p}^{(0)}(y-x)=\frac{{p}^{(0)}(y-x)h^{(i)}(y)}{h^{(i)}(x)},\]
where $h^{(i)}$ is the $X^{(0)}$ harmonic function given by $h^{(i)}(x):=\ee^{-t_{(i)}x\cdot\ell^{(i)}}$. At least in cases where $X^{(0)}$ can only enter the region $\{x:\:x\cdot\ell^{(i)}\leq -h\}$ by hitting the hyperplane $L_h:=\{x:\:x\cdot\ell^{(i)}=-h\}$ (as is the case in Example \ref{exa:canonical}), the transition probabilities $\hat{p}^{(0)}$ precisely describe the law of $X^{(0)}$ conditioned to hit $L_h$ up to this hitting time (from which time the process proceeds with transition probability $p^{(0)}$).
\end{REM}
\begin{proof}[{\bf \em Proof of Lemma \ref{trapprob}}] Given $\varepsilon\in(0,1)$ and $h\geq 1$, we start by defining a sequence of eight events. In understanding their definition, it will aid the reader to refer to Figure \ref{trapfig}. First, let
\[x_a=X^{(0)}_{\mathcal{T}_j^{\ell^{(i)}}},\]
and consider three cylinders $C_1$, $C_2$, $C_3$, as shown in Figure \ref{trapfig}. In particular, $C_1$ has straight sides parallel to $\delta^{(0)}$, radius $\varepsilon h$, is positioned so that its centre line runs through $x_a$, and it has height chosen so that it just touches the planes
\[P_1:=\left\{x:\:x\cdot \ell^{(i)}=X^{(0)}_{\mathcal{T}_j^{\ell^{(i)}}}\cdot\ell^{(i)}\right\}\]
and
\[P_2:=\left\{x:\:x\cdot \ell^{(i)}=X^{(0)}_{\mathcal{T}_j^{\ell^{(i)}}}\cdot\ell^{(i)}+h\right\}.\]
We define $C_2$ similarly, but parallel to $\hat{\delta}^{(i)}$, and positioned so that the ends of $C_1$ and $C_2$ closest to the plane $P_2$ are separated by a distance of $\varepsilon h$. The cylinder $C_3$ is also defined in the same way, again parallel to ${\delta}^{(0)}$, and positioned so that the ends of $C_2$ and $C_3$ closest to the plane $P_3$ are separated by a distance of $\varepsilon h$. It is moreover possible to complete the above construction in such a way that the three cylinders are disjoint. In addition to these sets, we define points:
\begin{itemize}
  \item $x_b$, to be the lattice point closest to the point $y_{b,\vep}$, where $y_{b,\vep}$ is the point on the centreline of $C_1$ at distance $2\varepsilon h$ from the end nearest to $P_1$;
  \item $x_c$, to be a lattice point lying within distance $2\varepsilon h$ of both $C_1$ and $C_2$, but separated by at least one lattice step from one of these sets, and satisfying $(x_c-x_a)\cdot\ell^{(i)}\in[h(1+\varepsilon/4),h(1+3\varepsilon/4)]$;
  \item $x_d$, to be the lattice point closest to the point $y_{d,\vep}$, where $y_{d,\vep}$ is the point on the centreline of $C_2$ at distance $\varepsilon h$ from the end nearest to $P_2$;
  \item $x_e$, to be a lattice point lying within distance $2\varepsilon h$ of both $C_2$ and $C_3$, but separated by at least one lattice step from one of these sets, and satisfying $\lfloor (x_c-x_e)\cdot\ell^{(i)}\rfloor =h$;
  \item $x_f$, to be the lattice point closest to the point $y_{f,\vep}$, where $y_{f,\vep}$ is the point on the centreline of $C_3$ at distance $\varepsilon h$ from the end nearest to $P_1$;
  \item $x_g$, to be a lattice point that lies within distance $4\varepsilon h$ of $C_3$, and satisfies
  $x_g\cdot\ell^{(i)}\geq X^{(0)}_{\mathcal{T}_j^{\ell^{(i)}}}\cdot\ell^{(i)}+h(1+3\varepsilon)$.
\end{itemize}
We next define the events of interest $(E_k)_{k=1}^8$.
\begin{itemize}
  \item $E_1=\{$After ${\mathcal{T}_j^{\ell^{(i)}}}$, $X^{(0)}$ follows a shortest lattice path to $x_b$, follows the same path back to $x_a$, and then returns again along that path to $x_b\}$. (The recrossing of the path ensures that none of the points on it are regenerations.)
  \item $E_2=E_1\cap \{$From $x_b$, $X^{(0)}$ stays inside $C_1$ until it reaches a distance $\leq \varepsilon h$ from the end of $C_1$ closest to $P_2\}$.
  \item $E_3=E_2\cap \{$From the previous stopping time, $X^{(0)}$ follows a shortest simple lattice path that passes through $x_c$ and eventually reaches $x_d\}$.
  \item $E_4=E_3\cap \{$From $x_d$, $X^{(0)}$ stays inside $C_2$ until it reaches a distance $\leq \varepsilon h$ from the end of $C_2$ closest to $P_1\}$.
  \item $E_5=E_4\cap \{$From the previous stopping time, $X^{(0)}$ follows a shortest, simple lattice path that passes through $x_e$ and eventually reaches $x_f$, whilst satisfying $(X^{(0)}-x_a)\cdot\ell^{(i)}\geq 0\}$.
  \item $E_6=E_5\cap \{$From $x_f$, $X^{(0)}$ stays inside $C_3$ until it reaches a distance $\leq \varepsilon h$ from the end of $C_3$ closest to $P_2\}$.
  \item $E_7=E_6\cap \{$From the previous stopping time, $X^{(0)}$ follows a shortest lattice path to $x_g\}$.
  \item $E_8=E_7\cap \{$From $x_g$, $X^{(0)}\cdot \ell^{(i)}$ never drops below $x_g\cdot\ell^{(i)}\}$.
\end{itemize}
By construction, for any fixed $\varepsilon\in(0,1/4)$ and large enough $h$, we have that $\mathcal{E}_j^{(i)}(h)\supseteq E_8$, with $x_c$ and $x_e$ giving the relevant cut-points for $X^{(0)}$. Hence we immediately obtain that
\[\limsup_{h\rightarrow\infty}\frac{-\log \mathbb{P}(\mathcal{E}_j^{(i)}(h))}{h}\leq
\sum_{k=1}^8\lim_{\varepsilon\rightarrow0}\limsup_{h\rightarrow\infty}\frac{-\log p_k}{h},\]
where $p_1:=\mathbb{P}(E_1)$ and $p_k:=\mathbb{P}(E_k\:\vline\: E_{k-1})$ for $k=2,\dots,8$. Note that, from $x_a$, the process $X^{(0)}$ continues as under its original law, but conditioned so that $X^{(0)}\cdot\ell^{(i)}$ does not fall below $x_a\cdot\ell^{(i)}$. However, since the event $E_8$ is contained within the latter event, which has strictly positive probability, it will be enough to estimate the probabilities $(p_k)_{k=1}^8$ for $X^{(0)}$ under its original law (conditional on starting at $x_a$).

\begin{figure}[t]
\begin{center}
\includegraphics[scale=0.25]{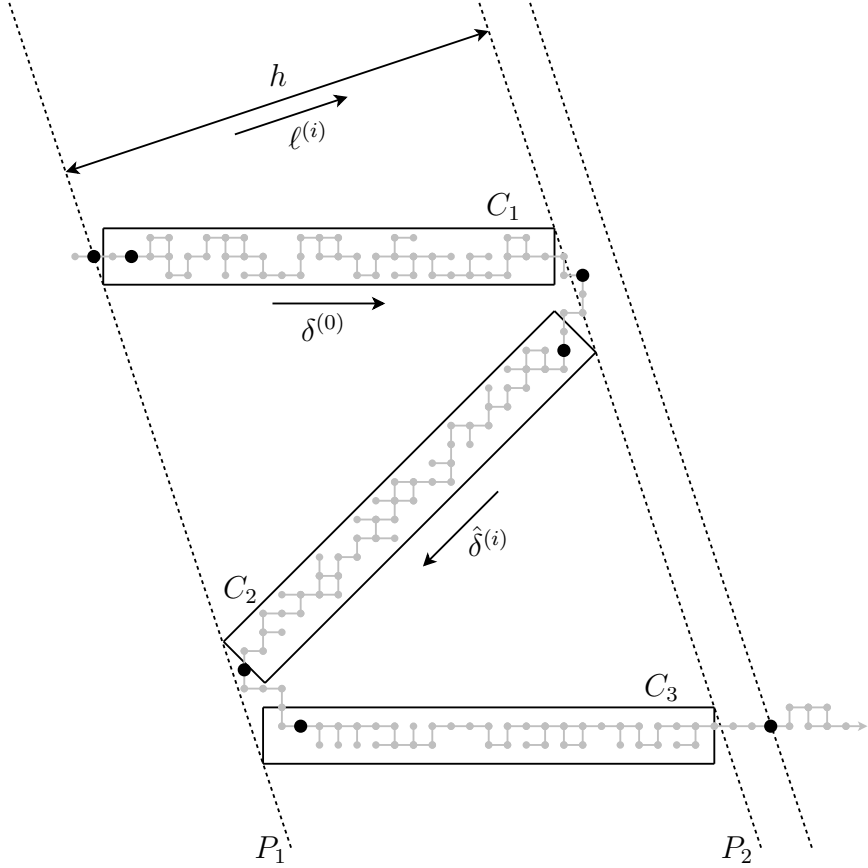}
\rput(-8.1,0){$P_1$}
\rput(-1.9,0){$P_2$}
\rput(-5,8.55){$C_1$}
\rput(-8.5,3.45){$C_2$}
\rput(-2.9,2.15){$C_3$}
\rput(-7.6,9.5){$\ell^{(i)}$}
\rput(-7.4,6.95){$\delta^{(0)}$}
\rput(-5.2,4.1){$\hat{\delta}^{(i)}$}
\rput(-8.0,10.3){$h$}
\end{center}
\caption{A possible realisation of the path of $X^{(0)}$ on the event $E_8$, as defined in the proof of Lemma \ref{trapprob}. The black dots mark the positions of $x_a$ (top left), $x_b$, $x_c$, $x_d$, $x_e$, $x_f$ and $x_g$ (bottom right).}
\label{trapfig}
\end{figure}

For $k=1,3,5,7$, since $E_k=E_{k-1}\cap E_k'$ where $E_k'$ only requires the random walk $X^{(0)}$ to follow an exact path of $\leq C\varepsilon h$ steps, it follows that
\[\lim_{\varepsilon\rightarrow0}\limsup_{h\rightarrow\infty}\frac{-\log p_k}{h}\leq \lim_{\varepsilon\rightarrow0}\left(-C\varepsilon \log \min_e p^{(0)}(e)\right)=0.\]
Next, note the strong law of large numbers and some basic calculations yields that, for $\eta>0$,
\[\lim_{n\rightarrow\infty}\mathbb{P}\left(\left|\frac{1}{n}X^{(0)}_{\lfloor nt\rfloor }-t{\delta}^{(0)}\right|\leq \eta\mbox{ for all }t\in[0,1]\right)=1.\]
Applying the change of parameter $n=h/({\delta}^{(0)}\cdot\ell^{(i)})$ and $\eta=\varepsilon\delta^{(0)}\cdot\ell^{(i)}$, the event within the above probability ensures that the process $(X^{(0)}_{\lfloor th/({\delta}^{(0)}\cdot\ell^{(i)})\rfloor})_{t\in[0,1]}$ stays within a distance $\varepsilon h$ of the linear function $(th\delta^{0}/({\delta}^{(0)}\cdot\ell^{(i)}))_{t\in[0,1]}$, which at time 1 in particular has traversed a distance $h$ in the $\ell^{(i)}$ direction. From this observation, we easily obtain that
\[\lim_{\varepsilon\rightarrow0}\limsup_{h\rightarrow\infty}\frac{-\log p_k}{h}=0, \quad \text{for }k=2,6.\]
Moreover, it is also straightforward to check from the directional transience of $X^{(0)}$ that $p_8\geq C$. Hence the previous limit also holds with $k=8$.

To complete the proof, it is thus sufficient to establish that
\begin{equation}\label{e4lim}
\lim_{\varepsilon\rightarrow0}\limsup_{h\rightarrow\infty}\frac{-\log p_4}{h}\leq t_{(i)}.
\end{equation}
In this direction, we start by appealing to a sample path large deviations principle of Mogul$'$ski\u\i \cite{Mogulskii} (see also \cite[Theorem 5.1.2 and the following remark (b)]{DZ}) to deduce that
\[\lim_{\eta\rightarrow0}\limsup_{n\rightarrow\infty}\frac{-\log\mathbb{P}\left(\left|\frac{1}{n}X^{(0)}_{\lfloor nt\rfloor}-t\hat{\delta}^{(i)}\right|\leq \eta\mbox{ for all }t\in[0,1]\right)}{n}=\Lambda(\hat{\delta}^{(i)}),\]
where
\[\Lambda(\hat{\delta}^{(i)}):=\sup_{x\in\mathbb{R}^d}\left(x\cdot\hat{\delta}^{(i)}-\log\mathbb{E}\left[\ee^{X_1^{(0)}\cdot x}\right]\right).\]
Thus, by considering the change of parameter $n=h/(-\hat{\delta}^{(i)}\cdot\ell^{(i)})$ and $\eta=\varepsilon(-\hat{\delta}^{(i)}\cdot\ell^{(i)})$, we obtain that
\[\lim_{\varepsilon\rightarrow0}\limsup_{h\rightarrow\infty}\frac{-\log p_4}{h}\leq\frac{\Lambda(\hat{\delta}^{(i)})}{-\hat{\delta}^{(i)}\cdot\ell^{(i)}},\]
and hence it will be enough for our purposes to show that the right-hand side here is equal to $t_{(i)}$. Now, observe that
\begin{eqnarray*}
\lefteqn{\left.\frac{\partial}{\partial x_j}\left(x\cdot\hat{\delta}^{(i)}-\log\mathbb{E}\left[\ee^{X_1^{(0)}\cdot x}\right]\right)\right|_{x=-t_{(i)}\ell^{(i)}}}\\
&=&\left.\left(\hat{\delta}^{(i)}_j-\frac{\mathbb{E}\left[\ee^{X_1^{(0)}\cdot x}X_1^{(0)}\cdot e_j\right]}{\mathbb{E}\left[\ee^{X_1^{(0)}\cdot x}\right]}\right)\right|_{x=-t_{(i)}\ell^{(i)}}\\
&=&\hat{\delta}^{(i)}_j-\frac{\hat{\delta}^{(i)}_j}{\varphi_{(i)}(t_{(i)})}\\
&=&0,
\end{eqnarray*}
and so $x=-t_{(i)}\ell^{(i)}$ is a stationary point for $x\cdot\hat{\delta}^{(i)}-\log\mathbb{E}[\ee^{X_1^{(0)}\cdot x}]$. Moreover, since the latter expression is strictly concave as a function of $x$ (see, for example the second lemma of \cite{Rothaus}), the value $x=-t_{(i)}\ell^{(i)}$ must be its unique maximiser. It follows that
\[\Lambda(\hat{\delta}^{(i)})=-t_{(i)}\ell^{(i)}\cdot\hat{\delta}^{(i)}-\log \varphi_{(i)}(t_{(i)})= -t_{(i)}\ell^{(i)}\cdot\hat{\delta}^{(i)},\]
which confirms \eqref{e4lim}.
\end{proof}

\section{The limiting path}\label{zinftysec}

To complete the article, in this section we prove Theorem \ref{thm:main3}, which we recall concerns the simplicity of $\mathcal{Z}^{(\infty)}$.

\begin{proof}[{\bf \em Proof of Theorem \ref{thm:main3}}(a)] For convenience, in this proof we will suppose that $\bs{p}^{(i)}$ is constant for all $i\geq 1$. To adapt the argument to the more general assumption of the theorem, one can just use an appropriate subsequence.

Let $(\mathcal{T}_j)_{j\geq 1}$ be the regeneration times of $X^{(0)}$ in direction $e_1$. Then, for every $i\geq 1$, $\mathcal{Z}^{(i)}$ necessarily contains each edge of the form $(X^{(0)}_{\mathcal{T}_{j}-1}, X^{(0)}_{\mathcal{T}_j})$ for $j\geq 2$. Now, conditional on $\mathcal{Z}^{(i)}$, the probability that the random walk $X^{(i)}$ started from $X^{(0)}_{\mathcal{T}_{j}}$ never hits $X^{(0)}_{\mathcal{T}_{j}-1}$ is given by
\[p_{i,j}:=\frac{r_{(i)}(X_{\mathcal{T}_{j}-1},X_{\mathcal{T}_{j}})}{R_{(i)}(X_{\mathcal{T}_{j}-1},\infty)}.\]
(To check this, one can apply \cite[Theorem 2.11]{Barlow}, for example.) We can bound the denominator above uniformly in $i$ by the almost-surely finite random variable that appears on the left-hand side of \eqref{pisum}. The numerator is constant by assumption. Hence $p_{i,j}\geq p_j$ for some $\mathbb{P}$-a.s.\ strictly positive random variable $p_j$.

Now, fix $j\geq 2$. Given $X^{(0)}$, the configuration of $\mathcal{Z}^{(i)}$ between the vertices $X_{\mathcal{T}_{j-1}}^{(0)}$ and $X_{\mathcal{T}_j}^{(0)}$ is given by one of a finite collection of graphs (each containing a simple path between $X_{\mathcal{T}_{j-1}}^{(0)}$ and $X_{\mathcal{T}_j}^{(0)}$). From this observation and the conclusion of the previous paragraph, we deduce that there exists a strictly positive random variable $\tilde{p}_j$ depending only on $X^{(0)}$ such that, conditional on $\mathcal{Z}^{(i)}$, the probability that the random walk $X^{(i)}$ started from $X^{(0)}_{\mathcal{T}_{j-1}}$ creates a simple path from $X_{\mathcal{T}_{j-1}}^{(0)}$ to $X_{\mathcal{T}_j}^{(0)}$, after which it never returns to $X^{(0)}_{\mathcal{T}_{j}-1}$, is bounded below $\tilde{p}_j$. Since, conditional on $\mathcal{Z}^{(i)}$, $X^{(i)}$ is independent of the earlier walks, it readily follows from the latter observation that $\mathbb{P}$-a.s.\ eventually one of the walks $X^{(i)}$ will create a simple path between $X_{\mathcal{T}_{j-1}}^{(0)}$ and $X_{\mathcal{T}_j}^{(0)}$, and clearly this will remain as the unique path connecting these two vertices in $\mathcal{Z}^{(\infty)}$.

Since the regeneration times of interest occur infinitely often along the path of $X^{(0)}$, we obtain that the part of the graph $\mathcal{Z}^{(\infty)}$ from $X^{(0)}_{\mathcal{T}_2-1}$ to $\infty$ is a simple path. Since $\mathcal{T}_2$ is a finite random variable, essentially the same argument yields that eventually one of the walks $X^{(i)}$ will create a simple path from $0$ to $X^{(0)}_{\mathcal{T}_2}$, and never return to $X^{(0)}_{\mathcal{T}_2-1}$. From this, we obtain the result.
\end{proof}

\begin{proof}[{\bf \em Proof of Theorem \ref{thm:main3}}(b)] Suppose $e\ne e_1$ is an edge satisfying the assumption of the relevant part of the theorem, and let $E=E_0$ be the event that $X^{(0)}_1=-e_1$, $X^{(0)}_2=0$, $X^{(0)}_3=e$, $X^{(0)}_4=0$, $X^{(0)}_5=e_1$, and $5$ is a regeneration time for $X^{(0)}$ in the direction $e_1$. Moreover, for $i\geq1$, let $E_i$ be the event that $E_j$ holds for $j<i$ and also $e\in\mathcal{Z}^{(i+1)}$, i.e.\ the vertex $e$ is visited by $X^{(i)}$. A standard result for random walks (apply \cite[Theorem 2.11]{Barlow} with $x=e$) then gives that
\begin{equation}\label{e1}
\mathbb{P}\left(E_i\:|\:E_{i-1},\:\mathcal{Z}^{(i)}\right)=1-\frac{r^{(i)}(0,e)}{R_{(i)}(0,\infty)+r^{(i)}(0,e)}\geq 1-\frac{r^{(i)}(0,e)}{R_{(i)}(0,\infty)}.
\end{equation}
Now, on $E_{i-1}$, we have that $R_{(i)}(0,\infty)\geq r^{(i)}(0,e_1)$, and so the above bound yields
\begin{equation}\label{e2}
\mathbb{P}\left(E_i\:|\:E_{i-1}\right)=1-\frac{c^{(i)}(0,e_1)}{c^{(i)}(0,e)}.
\end{equation}
On the other hand, similarly to the proof of Lemma \ref{lem:rec}, we have that
\[R_{(i)}(0,\infty)\geq r^{(i)}(0,e_1)\sum_{j=2}^\infty \beta_{(i)}^{-X_{\mathcal{T}_j}^{(0)}\cdot \ell^{(i)}},\]
where $(\mathcal{T}_j)_{j\geq 1}$ are the regeneration times of $X^{(0)}$ in direction $e_1$. Hence, using the obvious adaptation of the strong law of large numbers that appears at (\ref{llnx0regen}), we deduce that there exists a deterministic constant $c$ and finite random variable $N\geq2$ such that, $\mathbb{P}$-a.s.,
\[R_{(i)}(0,\infty)\geq r^{(i)}(0,e_1)\sum_{j=N}^\infty \beta_{(i)}^{-cj\delta^{(0)}\cdot \ell^{(i)}}=\frac{r^{(i)}(0,e_1)\beta_{(i)}^{-cN}}{1-\beta_{(i)}^{-c\delta^{(0)}\cdot \ell^{(i)}}}.\]
Thus, applying this bound in conjunction with \eqref{e1} and \eqref{e2} yields
\[\mathbb{P}\left(E_i\:|\:E_{i-1},\:N\right)=1-\frac{c^{(i)}(0,e_1)}{c^{(i)}(0,e)}\min\left\{1,\left(\beta_{(i)}^{c\delta^{(0)}\cdot \ell^{(i)}}-1\right)\beta_{(i)}^{cN}\right\}.\]
Since the event that ${Z}^{(\infty)}$ is not simple contains $\cap_{i\geq 0}E_i$, it follows that
\begin{eqnarray*}
\lefteqn{\P\left(\mc{Z}^{(\infty)} \text{ is not simple}\:\vline\: E_0,\:N\right)}\\
&\geq& \prod_{i=1}^\infty\left(1-\frac{c^{(i)}(0,e_1)}{c^{(i)}(0,e)}\min\left\{1,\left(\beta_{(i)}^{c\delta^{(0)}\cdot \ell^{(i)}}-1\right)\beta_{(i)}^{cN}\right\}\right),
\end{eqnarray*}
which is strictly positive whenever
\[\sum_{i=1}^\infty\frac{c^{(i)}(0,e_1)}{c^{(i)}(0,e)}\min\left\{1,\left(\beta_{(i)}^{c\delta^{(0)}\cdot \ell^{(i)}}-1\right)\beta_{(i)}^{cN}\right\}<\infty.\]
By the assumption of the lemma and the fact that $N$ is a finite random variable, the above sum is  $\mathbb{P}$-a.s.\ finite, and hence we have demonstrated that
\[\P\left(\mc{Z}^{(\infty)} \text{ is not simple}\right)\geq \mathbb{P}\left(E_0\right).\]
Finally, we note that it is easy to check that the right-hand side here is strictly positive, and so the proof is complete.
\end{proof}

\section*{Acknowledgements}
The work of MH was supported by Future Fellowship FT160100166, from the Australian Research Council. DC would like to thank the School of Mathematics and Statistics at the University of Melbourne for its generous support during a visit to Melbourne in August 2018, which is when the majority of the work on this article was completed, and also acknowledge the support of his JSPS Grant-in-Aid for Research Activity Start-up, 18H05832. MH thanks Ross Ihaka for providing the main R code used to perform simulations.

\bibliography{RWoRWoRW}
\bibliographystyle{amsplain}

\end{document}